\documentclass[pdflatex,sn-mathphys-num]{sn-jnl}

\usepackage{graphicx}%
\usepackage{multirow}%
\usepackage{amsmath,amssymb,amsfonts}%
\usepackage{amsthm}%
\usepackage{mathrsfs}%
\usepackage[title]{appendix}%
\usepackage{xcolor}%
\usepackage{textcomp}%
\usepackage{manyfoot}%
\usepackage{booktabs}%
\usepackage{algorithm}%
\usepackage{algorithmicx}%
\usepackage{algpseudocode}%
\usepackage{listings}%
\usepackage{setspace}

\theoremstyle{thmstyleone}%
\newtheorem{theorem}{Theorem}
\newtheorem{proposition}{Proposition}

\newtheorem{remark}{Remark}%
\newtheorem{lemma}{Lemma}
\newtheorem{corollary}{Corollary}
\newtheorem{assumption}{Assumption}%

\usepackage{comment}

\def\D{\displaystyle}				

\def\ul{\underline}

\def\wt{\tilde}

\def\Nz{\mathbb{N}}

\def\Oc{{\cal O}}

\def\fct#1{\mathop{\rm #1}}	                

\def\angle{\fct{a}}

\def\trec{\fct{trec}}

\def\AS{\mathbf{(A_{S})}}
\def\AL{\mathbf{(A_{L})}}
\def\AN{\mathbf{(A_{N})}}
\def\AB{\mathbf{(A_{B})}}
\def\AF{\mathbf{(A_{F})}}
\def\AR{\mathbf{(A_{R})}}

\def\argminx{\fct{argmin}}
\def\trec{\fct{trec}}
\def\rec{\fct{trial}}

\def\opt{\fct{opt}}

\def\rec{\fct{rec}}

\usepackage[normalem]{ulem}

\usepackage{enumitem}
\usepackage{hyperref}
\newcommand{\orcidicon}[1]{%
	\href{https://orcid.org/#1}{\includegraphics[height=1.5ex]{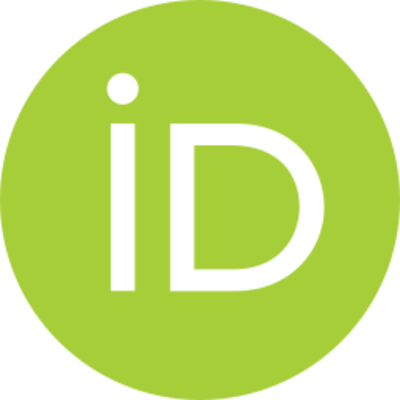}}%
}

\usepackage{tabularx}

\usepackage{tikz}
\begin{document}

	\newpage
	\pagenumbering{arabic}
	\setcounter{page}{1}
	\title[A Direction Adaptation Evaluation Strategy for Noisy Derivative-Free Optimization]{A Direction Adaptation Evaluation Strategy for Noisy Derivative-Free Optimization}

	\author[1]{\fnm{Morteza} \sur{Kimiaei}\orcidicon{0000-0002-7973-3770}}\email{morteza.kimiaei@univie.ac.at}
	
	\author[2]{\fnm{Mahsa} \sur{Yousefi}\orcidicon{0000-0002-2937-9654}}\email{mahsa.yousefi@unifi.it}

	\affil[1]{\orgdiv{Faculty of Mathematics}, \orgname{University of Vienna}, \orgaddress{\street{Oskar-Morgenstern-Platz 1}, \city{Vienna}, \postcode{A-1090}, \country{Austria}}}
	
	\affil[2]{\orgdiv{Department of Industrial Engineering}, \orgname{University of Florence}, \orgaddress{\street{Viale G.B. Morgagni 40}, \city{Florence}, \postcode{50134}, \country{Italy}}}

	\abstract{In this paper, we develop a direction adaptation evolution strategy (\texttt{DAES})---a new \texttt{MAES}-type method---for noisy derivative-free optimization, designed to reconcile the population-based search mechanisms of evolution strategies with rigorous complexity analysis. Unlike standard \texttt{MAES} schemes, \texttt{DAES} fixes the adaptation matrix to the identity and replaces matrix adaptation with a structured direction-generation mechanism based on symmetric sampling, joint sorting--selection of noisy function values, three-group recombination, and a new triangular search direction. Specifically, candidates are sampled along paired positive and negative directions, their inexact function values are jointly ranked, and the reordered directions are partitioned into three groups to construct three recombination points whose geometry defines the triangular direction. A signed sufficient-decrease search and extrapolation mechanism is then applied along this direction. This structure yields a population-based \texttt{MAES}-type algorithm that retains competitive practical behavior while being amenable to nonasymptotic analysis under noisy evaluations. We establish high-probability complexity bounds for nonconvex, convex, and strongly convex objective functions and derive corresponding guarantees at the noise-limited accuracy level. To the best of our knowledge, these results provide the first high-probability complexity guarantees for a noisy \texttt{MAES}-type derivative-free method with this direction-adaptation structure. Finally, numerical experiments on the 655 \texttt{prince} test problems from the \texttt{BARON} collection compare \texttt{DAES} with the advanced \texttt{MAES}-type solver \texttt{MADFO} and show a favorable trade-off between evaluation efficiency and ultimate robustness.}
	
	\keywords{Noisy Derivative-Free Optimization, Evolution Strategy, Randomized Optimization, Direction Adaptation, Complexity Results, Heuristic Optimization} 
	
	\maketitle
	
\section{Introduction}\label{IntroSec}
	
We consider the unconstrained derivative-free optimization (DFO) problem 
	\begin{equation}\label{e.prob}
		\min_{x \in \mathbb{R}^n} f(x),
	\end{equation}
where the objective function $f : \mathbb{R}^n \to \mathbb{R}$ is smooth but not directly accessible. Instead, we assume access to a \emph{noisy oracle} that, for a given point $x \in \mathbb{R}^n$, returns a noisy function value $\tilde{f}(x)$, contaminated by an unknown noise term $\tilde{f}(x) -f(x)$. We make no assumptions regarding the explicit form of $f(x)$, the availability of its gradient $g(x) := \nabla f(x)$, Lipschitz continuity, or the statistical properties of the noise. This setting is commonly referred to as the \emph{noisy DFO problem}; see, e.g., \cite{audet2017derivative,Conn2009IntroductionTD}.
	\subsection{Related work}
	
To solve the noisy DFO problem \eqref{e.prob}, numerous deterministic and randomized methods have been proposed, including line search (\texttt{LS}) solvers~\cite{larson2019derivative,Berahas2019,Brilli2024,Giovannelli2022,VRDFON,VRBBO,SSDFO,SDBOX,de2025linesearch}, trust-region (\texttt{TR}) methods~\cite{larson2019derivative,Bandeira2014,BCDFO,powell2002uobyqa,Liuzzi2019}, direct search (\texttt{DS}) methods~\cite{larson2019derivative,Dzahini2025,Gratton2015,NMSMAX,BFO,MCS,Custdio2014,Custdio2018}, and matrix adaptation evolution strategies (\texttt{MAES})~\cite{auger2005restart,beyer2020design,beyer2017simplify,MATRS,MADFO}. For advanced DFO methods that combine \texttt{DS} or \texttt{LS} with model-based techniques or with \texttt{MAES}, we refer the reader to~\cite{Audet2022,MATRS,MADFO,Ma2025}.


{\tt DS} methods explore the objective function along a finite set of polling directions and accept trial points that satisfy a forcing-function-based---typically proportional to the squared step-size---sufficient decrease condition. They require no derivative information and update step-sizes through simple increase--decrease rules. While {\tt DS} methods are robust and effective for low-dimensional problems, their reliance on polling directions can lead to poor scalability as the problem dimension increases.
    
On the other hand, several {\tt LS} methods also explore descent along prescribed directions, but they differ in the mechanism used to enforce sufficient decrease. Some approximate directional derivatives and apply Armijo or Wolfe conditions, while others rely on a forcing function similar to that of {\tt DS} methods. This classification underscores the close connection between derivative-free {\tt LS} strategies and {\tt DS} schemes. Moreover, these {\tt LS} variants employ an extrapolation step to move away from regions containing saddle points or maximizers and to rapidly approach an approximate minimizer.
	


{\tt MAES} consists of three phases: mutation, selection, and recombination. In the mutation phase, candidate solutions are generated by sampling random directions and applying an adaptive affine transformation, which requires matrix construction and matrix--vector multiplications. In the selection phase, candidates are ranked according to their inexact objective function values, and the best mutation directions are retained. Finally, the recombination phase computes a weighted average of the selected directions to form a new point and update the step-sizes and transformation matrix.

	
In noisy regimes, an algorithm may mistakenly identify points with spuriously good function values. In particular, this discrepancy can undermine the reliability of descent-based strategies and can mislead the sorting and selection process in \texttt{MAES}. Failure in the sorting--selection phase of \texttt{MAES} can adversely affect the recombination phase, as the ordering of individuals may be inaccurate.  Addressing this challenge by enhancements in different phases is the central objective of the present paper. 
	
\subsection{Our contribution}

In this paper, we develop a new \texttt{MAES}-type method for noisy DFO, called the direction adaptation evolution strategy (\texttt{DAES}), by combining structured direction adaptation with an extrapolation step inspired by derivative-free {\tt LS} methods. Unlike standard \texttt{MAES}, which rely on adaptive matrix updates and form recombination directions from selected mutation directions, \texttt{DAES} fixes the matrix to the identity and enables exploration over all mutation directions. It generates mutation points by sampling symmetric pairs of random directions (a direction and its negative), thereby promoting balanced exploration and mitigating the directional bias associated with one-sided sampling. The noisy function values are jointly sorted in ascending order, and the corresponding directions are partitioned into three ranked groups. Although this ranking can be unreliable under high noise, three recombination points---one from each group---are constructed, and their geometry defines a new triangular recombination direction designed to guide the search toward promising regions. This redesign replaces adaptive matrix updates with a structured population-based direction mechanism that retains practical effectiveness while being amenable to rigorous complexity analysis under noisy evaluations. From a theoretical perspective, fixing the adaptation matrix to the identity eliminates the need for additional assumptions on the spectrum or conditioning of the matrix adaptation, leading to a cleaner analysis of the proposed sampling and recombination scheme in noisy DFO.

In addition to the enhancements introduced in \texttt{DAES}, we provide a complexity analysis. To the best of our knowledge, these are the first high-probability complexity results for a noisy \texttt{MAES}-type method covering nonconvex, convex, and strongly convex objectives and attaining the corresponding complexity orders reported for related optimization frameworks in~\cite{VRBBO,VRDFON,bergou2020stochastic,Gratton2015}. To support the theoretical foundation of \texttt{DAES}, we derive high-probability complexity guarantees through a probabilistic convergence analysis. Under bounded evaluation noise, we show that the triangular recombination directions generated by the algorithm satisfy a probabilistic angle condition with respect to the true gradient, based on sharp bounds that explicitly account for bounded evaluation noise. By controlling the associated per-iteration failure probabilities, we establish that the required directional and descent properties hold uniformly with high probability. Building on these results and standard tools from nonconvex, convex, and strongly convex optimization, we derive total-iteration and function-evaluation complexity bounds at the noise-limited accuracy level, with explicit dependence on the noise level $\omega$ and the sampling budget $\lambda$.

We compare the full version of \texttt{DAES}, incorporating symmetric sampling, grouped sorting--selection, triangular recombination, and extrapolation, with \texttt{MADFO}~\cite{MADFO}, an advanced \texttt{MAES}-type derivative-free solver, on 655 \texttt{prince} test problems from the \texttt{BARON} collection~\cite{BARON} with dimensions ranging from 2 to 100. The results show that \texttt{DAES} achieves the strongest overall behavior among its variants, with triangular recombination and extrapolation contributing most to efficiency and robustness, and is more efficient than \texttt{MADFO} over small and moderate budgets, whereas \texttt{MADFO} is ultimately more robust for substantially larger budgets.
	
\subsection{Notation and Organization}
In this paper, $[n] := \{1,2,\dots,n\}$ and
$[n]_0 := \{0,1,\dots,n\}$. The notation $\mathcal{N}(0,I_n)$ denotes the 
standard multivariate normal distribution with mean $0\in\mathbb{R}^n$ and
identity covariance matrix $I_n\in\mathbb{R}^{n\times n}$, and $a \sim \mathcal{U}(0,1)$ refers to $a$ as a random variable drawn from the uniform distribution on $(0,1)$. The abbreviation
i.i.d. stands for independent and identically distributed random variables and $\|\cdot\|$ stands for the Euclidean norm. In the sorting--selection phase at iteration $\ell$, the original direction
$d^{j\ell}$ is represented by the ranked direction $d^{\pi(j)\ell}$ after sorting. Thus, subscript $\pi$ denotes the labeling induced by the
sorting--selection phase; otherwise, it refers to the number pi. We also adopt the conventional asymptotic notation
$\mathcal{O}$, $\Omega$, and $\Theta$ to characterize growth rates, and use
$\sup$ and $\inf$ to denote the least upper bound and greatest lower bound,
respectively.


	
The paper is organized as follows. Section~\ref{MethodSec} introduces \texttt{DAES} and details its mechanism. We establish complexity guarantees in Sections~\ref{sec:comResults} and ~\ref{sec:comResults2} for deterministic and randomized variants of \texttt{DAES}, respectively.  Numerical experiments, demonstrating the effectiveness of \texttt{DAES} compared to existing approaches, are reported in Section~\ref{NumSec}. Finally, Section~\ref{ConclusionSec} concludes the paper and outlines directions for future research.
	
\section{{\tt DAES}: A New DFO Method}\label{MethodSec} 
	
This section introduces {\tt DAES}, a DFO method in the \texttt{MAES} framework. To solve \eqref{e.prob}, {\tt DAES} starts from an initial guess \(x^{0} \) and proceeds iteratively with iteration index \( \ell \geq 1 \) by three distinct enhanced phases: mutation, sorting--selection, and recombination. 
        
In the following subsections, we detail the process involved in each phase. The complete procedure of {\tt DAES} is outlined in Subsection~\ref{sec:recomPhase}.
		
\subsection{Mutation Phase}\label{s.MutPhase}
	
The goal of the mutation phase is to generate a finite number ($\lambda$) of mutation points, which are randomly sampled. At iteration $\ell$ of {\tt DAES}, this phase involves generating three components: the distribution directions, $\{z^{j\ell}\}$, the mutation step-sizes, $\{\alpha^{j\ell}\}$, and the mutation points, $\{y_\pm^{j\ell}\}$ for $j\in[\lambda]$. The mutation directions are defined as the product of the affine scaling matrix and the distribution directions. In contrast to \texttt{MAES}, where a scaling matrix is typically employed to generate the $\lambda$ mutation directions by transforming the distribution directions, \texttt{DAES} adopts the identity matrix. This means that the mutation directions are directly derived from the sampled (distribution) directions without any covariance shaping. Obviously, this avoids the computational overhead associated with matrix-based adaptations in \texttt{DAES}. Given mutation directions and step-sizes, the goal of the mutation phase is achieved. In the following, we describe how each component of the mutation phase is constructed.
	
\subsubsection{Mutation Directions}\label{s.DisDir}
	
In some DFO approaches, such as {\tt LS}-based methods, function evaluations
are typically carried out along both symmetric directions to determine which
one results in greater descent. Inspired by this idea, \texttt{DAES} also
generates mutation directions in symmetric pairs, bypassing the need for an
adaptation matrix.

Assuming that $\lambda$ is an even multiple of
three\footnote{Users can handle any $\lambda$ value; this assumption is made
for simplicity in the presentation.}, \texttt{DAES} constructs $\lambda/2$
sampled vectors $\{z^{j\ell}\}$ for $j\in[\lambda/2]$, stored in
$Z^\ell\in\mathbb{R}^{n\times\lambda/2}$, where each column satisfies
$z^{j\ell}\sim\mathcal{N}(0,I_n)$. Although this isotropic distribution is
unbiased in expectation, finite samples may exhibit directional asymmetries.
To mitigate such sample-induced bias, \texttt{DAES} includes both
$+z^{j\ell}$ and $-z^{j\ell}$, yielding a symmetric set with zero empirical
mean. The corresponding raw symmetric Gaussian samples are stored as
\[
Z^\ell_{\pm}
:=
\left(
Z^\ell\; -Z^\ell
\right)
\in\mathbb{R}^{n\times\lambda},
\]
and are retained for the scaling step-size adaptation.

Since the affine scaling matrix is fixed to the identity, the mutation
directions are obtained directly from these sampled vectors, followed by
normalization to ensure consistent directional scaling. More precisely, let
$\widehat d^{j\ell}$ denote the corresponding raw symmetric vector, so that
\[
\widehat d^{j\ell}
:=
\begin{cases}
z^{j\ell},
&
j\in[\lambda/2],
\\[1mm]
-z^{j-\lambda/2,\ell},
&
j\in[\lambda]\setminus[\lambda/2].
\end{cases}
\]
Each mutation direction is then normalized as
\begin{equation}\label{e.lldll}
d^{j\ell}
:=
\frac{\widehat d^{j\ell}}
{\|\widehat d^{j\ell}\|},
\qquad
\text{for all}
\quad
j\in[\lambda].
\end{equation}
Hence, letting
\[
U^\ell
:=
\left(
\frac{z^{1\ell}}{\|z^{1\ell}\|},
\ldots,
\frac{z^{\lambda/2,\ell}}{\|z^{\lambda/2,\ell}\|}
\right)
\in\mathbb{R}^{n\times\lambda/2},
\]
we store the normalized symmetric mutation directions as
\[
D^\ell
:=
\left(
U^\ell\; -U^\ell
\right)
\in\mathbb{R}^{n\times\lambda}.
\]
Thus, $Z^\ell_{\pm}$ contains the raw symmetric Gaussian samples used in the
scaling step-size adaptation, whereas $D^\ell$ contains the normalized
symmetric mutation directions used to generate mutation points and in the
subsequent sorting--selection and recombination phases.

Empirical results also confirm that symmetric sampling improves stability and
search performance compared to asymmetric directions. Besides improving the
geometric coverage of the search space by avoiding directional bias, symmetric
directions are also known to reduce the impact of noise when gradient
approximations are constructed from function-value differences, a common
strategy in DFO\footnote{This property motivates our follow-up study, which
builds upon the present work.}.

\subsubsection{Mutation Points}\label{S.mutCandid}
	
For generating mutation points (candidates), the mutation step-sizes play a
crucial role in ensuring adequate exploration of the search space. Let
$[\alpha_{\min},\alpha_{\max}]$ denote the admissible interval of step-sizes.
Once the step-sizes $\alpha^{j\ell}>0$, for all $j\in[\lambda]$, are selected
from this interval at iteration $\ell$, the $\lambda$ mutation points are
computed as
\[
y^{j\ell}
=
x_{\trec}^{\ell}
+
\alpha^{j\ell} d^{j\ell},
\qquad
j\in[\lambda],
\]
where $x_{\trec}^{\ell}\in\mathbb{R}^n$ denotes the current iterate, which is
updated in the recombination phase (Subsection~\ref{sec.tri-sp}), 
$x_{\trec}^{0}=x^0$ is the given initial point, and $d^{j\ell}\in D^\ell$.

Using the normalized antipodal directions and the paired step-sizes
$\alpha^{j+\lambda/2,\ell}=\alpha^{j\ell}$, the $\lambda$ mutation points can
equivalently be written as
\begin{equation}\label{e.mutPoints}
y^{j\ell}_+
:=
x_{\trec}^{\ell}
+
\alpha^{j\ell}
\frac{z^{j\ell}}{\|z^{j\ell}\|},
\qquad
y^{j\ell}_-
:=
x_{\trec}^{\ell}
-
\alpha^{j\ell}
\frac{z^{j\ell}}{\|z^{j\ell}\|},
\qquad
j\in[\lambda/2].
\end{equation}

\subsubsection{Mutation Step-Sizes}\label{S.mutStepSize} 
	
We now explain how the mutation step-sizes $\alpha^{j\ell} \in [\alpha_{\min},\alpha_{\max}]$ are determined and adaptively adjusted; see \cite{MADFO,hansen2016cma}. Using element-wise division, let us define
\begin{equation}\label{eq:bjl-def}
b^{j\ell}:=\left|{x_{\trec}^{\ell}}/{q_\alpha}\right|, \qquad q_\alpha:=z^{j\ell}\neq 0 \qquad \text{for}\quad j\in[\lambda/2].
\end{equation}
Then
\begin{equation}\label{e.mutStep}
\alpha^{j\ell} = \sqrt{\sigma^{\ell-1}\cdot \text{median}(b^{j\ell})},
\qquad \text{for}\quad j\in[\lambda/2],
\end{equation}
where $\sigma^{\ell-1}$ is a scaling step-size (with $\sigma^0=1$), and the median is computed using only the valid entries of $b^{j\ell} \in \mathbb{R}^n$, excluding NaN, infinite, and zero values. Recalling $Z^\ell_{\pm}=(Z^\ell\; -Z^\ell)$,
$D^\ell=(U^\ell\; -U^\ell)$, and \eqref{e.mutPoints},
{\tt DAES} sets
$\alpha^{j+\lambda/2,\ell}:=\alpha^{j\ell}$
for $j\in[\lambda/2]$. At iteration $\ell$, the scaling step-size is updated as
\begin{equation}\label{eq.recomStep0}
{\sigma}^{\ell} = {\sigma}^{\ell-1}
\exp\!\left(
{c_{\sigma}}{d_{\sigma}^{-1}}
\left({\|s_\sigma^{\ell-1}\|_2}/{e_{\sigma}} - 1
\right)
\right)
\in [\alpha_{\min},\alpha_{\max}],
\end{equation}
where $c_\sigma\leq 1$ is a learning rate, $d_\sigma\approx 1$ is a damping parameter, and $e_\sigma$ approximates $\mathbb{E}(\|u\|_2)$ for $u\sim\mathcal{N}(0,I_n)$. In~\eqref{eq.recomStep0}, $s_\sigma^{\ell-1}$ (with $s_\sigma^0=0\in\mathbb{R}^n$) is the evaluation path \cite{beyer2017simplify}.

\subsection{Sorting--Selection Phase}\label{sec:sort}
A proper sorting--selection process directly influences the quality of recombination. In classical \texttt{MAES}, the best mutation points are selected according to their function values, and their directions are averaged to define the recombination direction. Standard approaches \cite{auger2005restart,beyer2020design,beyer2017simplify,MADFO} split mutation points into a high-quality subset with the lowest function values, used to improve the current solution, and a discarded subset. However, selecting only the lowest noisy function values can be misleading.

To avoid misleading information from spuriously good points and bias the search direction, \texttt{DAES} ranks all symmetric mutation directions according to the noisy function values of their mutation points. For each mutation point $y_+^{j\ell}$ and $y_-^{j\ell}$ in \eqref{e.mutPoints}, let $\wt f_+^{j\ell}$ and $\wt f_-^{j\ell}$ denote their corresponding noisy function values, respectively. Letting the vectors
$$\wt F^\ell_+ := (\wt f_+^{1\ell},\ldots,\wt f_+^{\lambda/2,\ell}),\qquad
\wt F^\ell_- := (\wt f_-^{1\ell},\ldots,\wt f_-^{\lambda/2,\ell}).$$
and $\wt F^\ell=(\wt F^\ell_+\quad \wt F^\ell_-)\in\mathbb{R}^{1\times\lambda}$, \texttt{DAES} forms a sorted vector $\wt F^\ell_\pi=(\wt f^{1\ell}_\pi,\ldots,\wt f^{\lambda\ell}_\pi)$ with $\wt f^{1\ell}_\pi\le\cdots\le\wt f^{\lambda\ell}_\pi$. The same sorting permutation is applied consistently to both the raw symmetric storage $Z^\ell_{\pm}$ and the normalized direction matrix $D^\ell$, yielding $(Z^\ell_{\pm})_\pi$ and $D^\ell_\pi$, respectively. Assuming $\lambda=3\mu$, the reordered directions in $(Z^\ell_{\pm})_\pi$ and the reordered normalized directions in $D^\ell_\pi$ are partitioned into three groups of size $\mu$, corresponding to the $\mu$ smallest, intermediate, and largest noisy function values, respectively.

\subsection{Recombination Phase}\label{sec:recomPhase}

This phase aims to guide the search toward reliable regions of the solution space by forming a robust direction. Such a heuristic search direction is constructed from information aggregated from the ranked groups in the sorting--selection phase. The direction $p^{\ell}_{\trec}$ (Subsection~\ref{seb.ptrec}) is next used to update the current iterate $x^{\ell}_{\trec}$ (Subsection~\ref{sec.tri-sp}).
	
\subsubsection{Triangular Recombination Direction}\label{seb.ptrec}

To improve the reliability of the search, let us consider weight parameters as
below
\begin{equation}\label{e.weq1}
w_1\ge \ldots \ge w_{\mu}\ge \ldots \ge w_{2\mu} \ge \ldots \ge
w_{\lambda} >0,
\qquad
\text{with}
\qquad
\lambda = 3\mu.
\end{equation}
Letting
$\mathcal{G}_1 := [\mu]$,
$\mathcal{G}_2 := [\mu] + \mu$, and
$\mathcal{G}_3 := [\mu] + 2\mu$,
and using three ranked groups partitioned from $D^\ell_\pi$, we compute
recombination directions
$d^\ell_{\mathrm{rec}1}$,
$d^\ell_{\mathrm{rec}2}$, and
$d^\ell_{\mathrm{rec}3}$ as
\begin{equation}\label{e.RecDirs}
d^\ell_{\mathrm{rec}\,k}
=
\sum_{j \in \mathcal{G}_k}\bar{w}_j d^{j\ell}_\pi,
\qquad
\sum_{j \in \mathcal{G}_k} \bar{w}_j = 1,
\qquad
\bar{w}_j
=
\frac{w_j}{\sum_{i\in\mathcal{G}_k} w_i},
\qquad
k\in[3].
\end{equation}
Note that normalizing group weights ensures that each recombination direction
reflects its internal ranking without being influenced by weight magnitudes
across groups.

The use of ranked convex combinations in \eqref{e.RecDirs} introduces a
potential \emph{within-group cancellation} mechanism. In particular, even if
one sampled direction has a sufficiently negative inner product with the true
gradient and is therefore well aligned with local descent, its contribution
may be partially offset by the remaining directions in the same ranked group.
In the subsequent probabilistic analysis, the sampled direction with the most
negative gradient inner product is referred to as the \emph{latent descent
direction}; it is ``latent'' because it is defined through the unavailable
true gradient and is not directly identified by the algorithm.

The subsequent analysis therefore imposes a within-group dominance property on
the ranked group containing this latent descent direction. More precisely, for
some $\varepsilon_{\bar w}\in(0,1)$, the normalized weight assigned to its best-ranked
element is required to carry at least $1-\varepsilon_{\bar w}$ of the group weight,
while the remaining directions together carry at most $\varepsilon_{\bar w}$. This
condition is quantified in Lemma~\ref{lem:winner-dominant-angle}, below. Its role is
to prevent the other directions in the same group from completely canceling
the alignment inherited from the latent descent direction.
\begin{lemma}\label{lem:winner-dominant-angle}
Let $\{u_j\}_{j\in\mathcal G_k}$ be unit vectors, and let
$\{\bar w_j\}_{j\in\mathcal G_k}$ be nonnegative weights. Suppose that, for
some $\varepsilon_{\bar w}\in(0,1)$, there exists
$j^\star\in\mathcal G_k$ such that
\begin{equation}\label{eq:wd-weights}
\bar w_{j^\star}
\;\ge\;
1-\varepsilon_{\bar w},
\qquad
\text{and}
\qquad
\sum_{j\in\mathcal G_k\setminus\{j^\star\}}
\bar w_j
\;\le\;
\varepsilon_{\bar w}.
\end{equation}
Then, for any unit vector $q\in\mathbb R^n$ and $v
:=
\sum_{j\in\mathcal G_k}
\bar w_j u_j$, the following lower bound holds:
\begin{equation}\label{eq:wd-angle}
|q^Tv|
\;\ge\;
(1-\varepsilon_{\bar w})
|q^Tu_{j^\star}|
-
\varepsilon_{\bar w}.
\end{equation}
In particular, if, for some $t>0$, $\varepsilon_{\bar w}
<{t}/{(1+t)}$ and $|q^Tu_{j^\star}|
\ge
t$, then
\begin{equation}\label{eq:wd-strict-positivity}
|q^Tv|
\;\ge\;
(1-\varepsilon_{\bar w})t-\varepsilon_{\bar w}
\;>\;
0.
\end{equation}
\end{lemma}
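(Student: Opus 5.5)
The plan is to split $v$ into the contribution of $u_{j^\star}$ and a remainder, and then apply the reverse triangle inequality to $q^Tv$. Write
\[
v=\bar w_{j^\star}u_{j^\star}+r,
\qquad
r:=\sum_{j\in\mathcal G_k\setminus\{j^\star\}}\bar w_j u_j .
\]
Then $q^Tv=\bar w_{j^\star}q^Tu_{j^\star}+q^Tr$, and hence
\[
|q^Tv|\;\ge\;\bar w_{j^\star}|q^Tu_{j^\star}|-|q^Tr|.
\]

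For the first term, the weight is nonnegative and $\bar w_{j^\star}\ge 1-\varepsilon_{\bar w}$ by \eqref{eq:wd-weights}. This gives $\bar w_{j^\star}|q^Tu_{j^\star}|\ge(1-\varepsilon_{\bar w})|q^Tu_{j^\star}|$.

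For the remainder, the Cauchy--Schwarz inequality gives $|q^Tu_j|\le\|q\|\,\|u_j\|=1$ for every unit vector $u_j$. Combining this with the triangle inequality and the nonnegativity of the weights,
\[
|q^Tr|\le\sum_{j\ne j^\star}\bar w_j|q^Tu_j|\le\sum_{j\ne j^\star}\bar w_j\le\varepsilon_{\bar w},
\]
where the last step is the second part of \eqref{eq:wd-weights}. Substituting both bounds yields \eqref{eq:wd-angle}.

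For the "in particular" claim, the right-hand side $(1-\varepsilon_{\bar w})s-\varepsilon_{\bar w}$ is increasing in $s$, because $1-\varepsilon_{\bar w}>0$. So $|q^Tu_{j^\star}|\ge t$ gives $|q^Tv|\ge(1-\varepsilon_{\bar w})t-\varepsilon_{\bar w}$. This quantity equals $t-\varepsilon_{\bar w}(1+t)$, which is strictly positive exactly when $\varepsilon_{\bar w}<t/(1+t)$. That gives \eqref{eq:wd-strict-positivity}.

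No step here is a real obstacle. The only points needing care are that the nonnegativity of the weights is used both to drop absolute values on the $\bar w_j$ and for the monotonicity in $s$, and that the bound $|q^Tu_j|\le 1$ requires both $q$ and $u_j$ to be unit vectors.
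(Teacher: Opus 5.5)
Your proof is correct and follows the paper's argument step for step: isolate the $j^\star$ term, apply the reverse triangle inequality, bound each remaining $|q^Tu_j|$ by $1$ via Cauchy--Schwarz so the remainder is at most $\sum_{j\neq j^\star}\bar w_j\le\varepsilon_{\bar w}$, and then note that $(1-\varepsilon_{\bar w})t-\varepsilon_{\bar w}=t-\varepsilon_{\bar w}(1+t)>0$ exactly when $\varepsilon_{\bar w}<t/(1+t)$. One small bookkeeping point: your closing remark attributes the monotonicity in $|q^Tu_{j^\star}|$ to nonnegativity of the weights, but it actually comes from $1-\varepsilon_{\bar w}>0$, as you correctly state in the body.
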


\begin{proof}
By the definition of $v$, $q^Tv
=
\bar w_{j^\star}q^Tu_{j^\star}
+
\sum_{j\in\mathcal G_k\setminus\{j^\star\}}
\bar w_j q^Tu_j$. Applying the reverse triangle inequality gives
\begin{equation}\label{eq:proof-step-triangle}
\begin{aligned}
|q^Tv|
&\ge
\bar w_{j^\star}
|q^Tu_{j^\star}|
-
\sum_{j\in\mathcal G_k\setminus\{j^\star\}}
\bar w_j |q^Tu_j|
\ge
\bar w_{j^\star}
|q^Tu_{j^\star}|
-
\D\sum_{j\in\mathcal G_k\setminus\{j^\star\}}
\bar w_j,
\end{aligned}
\end{equation}
where the second inequality follows from $|q^Tu_j|
\le
\|q\|\,\|u_j\|
=
1$. Using \eqref{eq:wd-weights} in
\eqref{eq:proof-step-triangle} yields
\[
|q^Tv|
\ge
(1-\varepsilon_{\bar w})
|q^Tu_{j^\star}|
-
\varepsilon_{\bar w},
\]
which proves \eqref{eq:wd-angle}. Finally, if
$|q^Tu_{j^\star}|\ge t$, then $|q^Tv|
\ge
(1-\varepsilon_{\bar w})t-\varepsilon_{\bar w}$. Moreover, $(1-\varepsilon_{\bar w})t-\varepsilon_{\bar w}>0$ if and only if
$\varepsilon_{\bar w}<{t}/{(1+t)}$. Therefore \eqref{eq:wd-strict-positivity} follows.
\end{proof}

Given \eqref{e.RecDirs}, the corresponding recombination points are
\begin{equation}\label{e.RecPoints}
x^{\ell}_{\rec k}
=
x_{\trec}^{\ell}
+
\alpha^\ell_{\rec k} d^\ell_{\rec k},
\qquad
k\in[3],
\end{equation}
where
$\alpha^\ell_{\text{rec}\,1}$,
$\alpha^\ell_{\text{rec}\,2}$, and
$\alpha^\ell_{\text{rec}\,3}
\in [\alpha_{\min}, \alpha_{\max}]$
are recombination step-sizes obtained via
\eqref{eq:bjl-def}, \eqref{e.mutStep}, and \eqref{eq.recomStep0},
where $q_\alpha:=d^\ell_{\rec k}$ for all $k \in [3]$, respectively.

\begin{figure}[!http]
\begin{center}
\includegraphics[width=0.6\linewidth]{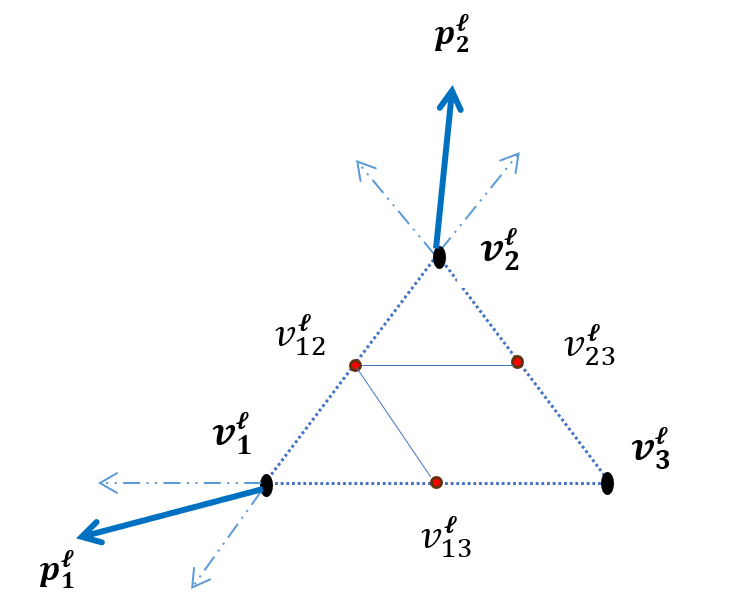}
\end{center}
\caption{Generating two components for the triangular recombination}
\label{fig:triangle}
\end{figure}

Now, we introduce a new heuristic direction inspired by \cite{MADFO}. As shown
in Fig.~\ref{fig:triangle}, let
$\triangle_1^\ell:=\triangle(v_1^\ell,v_{2}^\ell,v_{3}^\ell)$
be a triangle with the vertices
$v_k^\ell := x^\ell_{\rec k}$ for $k\in[3]$. We also build
$\triangle_2^\ell:=\triangle(v_1^\ell,v_{12}^\ell,v_{13}^\ell)$
and
$\triangle_3^\ell:=\triangle(v_2^\ell,v_{21}^\ell,v_{23}^\ell)$
near $v_1^\ell$ and $v_2^\ell$, where
\begin{equation}\label{e.vv}
v_{12}^\ell = v_{21}^\ell
:=
\frac{v_1^\ell + v_2^\ell}{2},
\qquad
v_{13}^\ell
:=
\frac{v_1^\ell + v_3^\ell}{2},
\qquad
\text{and}
\qquad
v_{23}^\ell
:=
\frac{v_2^\ell + v_3^\ell}{2}.
\end{equation}
Now, given
$a^\ell, \bar{a}^\ell \sim \mathcal{U}(0,1)$,
and
$b^\ell := \sqrt{1 - (a^\ell)^2}$,
$\bar{b}^\ell := \sqrt{1 - (\bar{a}^\ell)^2}$,
we generate
\begin{equation}\label{e.HeuDir_com}
\begin{aligned}
p^{\ell}_{1}
&=
a^\ell v_{12}^\ell + b^\ell v_{13}^\ell,
\qquad
p^{\ell}_{2}
=
\bar{a}^\ell v_{21}^\ell + \bar{b}^\ell v_{23}^\ell.
\end{aligned}
\end{equation}
Using \eqref{e.vv} and \eqref{e.HeuDir_com}, we introduce
\begin{equation}\label{e.HeuDir_p}
\widehat p_{\trec}^\ell
=
\frac{1}{w_p^\ell}
\big(
\eta p^{\ell}_{1}
+
(1 - \eta) p^{\ell}_{2}
\big)
-
x_{\trec}^{\ell},
\end{equation}
where
\begin{equation}\label{e.unit}
w_p^\ell
:=
\eta (a^\ell+b^\ell)
+
(1-\eta) (\bar{a}^\ell + \bar{b}^\ell),
\qquad
\text{and}
\qquad
\eta \in (0,1).
\end{equation}
Using \eqref{e.HeuDir_com}, \eqref{e.HeuDir_p}, and \eqref{e.unit}, by some
manipulation, this direction gets the final form as
\begin{equation}\label{e.trid}
\widehat p_{\trec}^\ell
=
\dfrac{
\sum_{k=1}^3
\theta_k^\ell\,
\alpha^\ell_{\rec k}\,
d^\ell_{\rec k}
}{
\sum_{k=1}^{3} \theta^\ell_k
},
\end{equation}
where
\begin{equation}
\begin{aligned}
\theta^\ell_1
&:=
\frac{1}{2}
\big(
\eta (a^\ell + b^\ell)
+
(1-\eta)\bar{a}^\ell
\big),
\\
\theta^\ell_2
&:=
\frac{1}{2}
\big(
\eta a^\ell
+
(1-\eta) (\bar{a}^\ell + \bar{b}^\ell)
\big),
\qquad
\theta^\ell_3
:=
\frac{1}{2}
\big(
\eta b^\ell
+
(1-\eta)\bar{b}^\ell
\big),
\end{aligned}
\end{equation}
and
\begin{equation}
\sum_{k=1}^{3} \theta_k^\ell = w_p^\ell.
\label{e.w}
\end{equation}
We term $\widehat p_{\trec}^\ell$ as the
\textit{\textbf{triangular recombination direction}}.

The representation \eqref{e.trid} reveals a second possible cancellation
mechanism, occurring \emph{across ranked groups}. Indeed, even when the
group containing a well-aligned latent descent direction preserves sufficient
alignment through the within-group recombination in \eqref{e.RecDirs}, its
contribution to \eqref{e.trid} may still be offset by the other two
group-recombination directions. Positivity of the coefficients
$\theta_k^\ell$ alone does not exclude such cancellation.

Accordingly, the subsequent angle-transfer analysis treats the two mechanisms
separately. The within-group dominance property controls cancellation inside
the ranked group containing the latent descent direction, whereas a
quantitative cross-group margin property controls cancellation among the
three terms in \eqref{e.trid}. The latter condition is expressed in terms of
the effective coefficients
$\theta_k^\ell\alpha_{\rec k}^\ell$ and ensures that, after the
within-group alignment has been established, the contribution of the relevant
group is not canceled by the remaining group-recombination contributions.
This distinction is used explicitly in
Proposition~\ref{prop:angle-transfer-trec}, below. 

Since the magnitude of $\widehat p_{\trec}^\ell$ may lead to overly large or small
displacements in noisy settings, it is controlled by some $\delta>0$. Hence,

\begin{equation}\label{eq:ptrec-normalized}
p_{\trec}^\ell
=
\delta
\frac{\widehat p_{\trec}^\ell}
{\|\widehat p_{\trec}^\ell\|},
\qquad
\widehat p_{\trec}^\ell\neq0.
\end{equation}

The fixed-norm scaling in \eqref{eq:ptrec-normalized} controls the magnitude of
the final search direction while preserving its angular relation with the
gradient. Consequently, at every iteration for which the triangular
recombination direction is nonzero, its norm is fixed at $\delta$. This
separation between the geometric construction in \eqref{e.trid} and the
magnitude control in \eqref{eq:ptrec-normalized} is used later in the
probabilistic angle-transfer and complexity analyses.

Note that the subtraction of $x_{\trec}^{\ell}$ in \eqref{e.HeuDir_p} is
mathematically necessary to convert the trial position
$\frac{1}{w_p^\ell}
\big(
\eta p^{\ell}_{1} + (1 - \eta) p^{\ell}_{2}
\big)$
into a proper search direction, as it is intrinsically tied to
$x_{\trec}^{\ell}$. Removing this positional bias gives the pure direction
$\widehat p_{\trec}^\ell$.

\subsubsection{Triangular Recombination Step-Size and Point}\label{sec.tri-sp}

Given the current iterate $x^\ell_{\trec}$ and the constructed direction $p^\ell_{\trec}$, let us introduce the
\textit{\textbf{noisy derivative-free sufficient descent condition}} ({\tt NDF-SDC}) as 
\begin{equation}\label{e.SDC-nograd}  
    \wt\mu_s(\alpha) > \wt\beta, \qquad \text{for some} \quad \wt\beta\in(0,1),
\end{equation}
for some $s\in\{-1,1\}$, where
\begin{equation}\label{e.muW-nograd}
    \wt\mu_s(\alpha)
    :=
    \frac{\wt f(x^{\ell}_{\trec}+ s\alpha p^{\ell}_{\trec})
    - \wt f(x^{\ell}_{\trec})}{-\alpha^2}, \qquad  \alpha > 0.
\end{equation}

Using the {\tt NDF-SDC} and an extrapolation mechanism, our method computes a step-size $\alpha_{\trec}^{\ell}>0$ to update $x_{\trec}^{\ell}$. To describe the mechanism, we introduce an expansion factor $\gamma_e>1$ and stress on \eqref{e.muW-nograd} where both trial points $x_{\trec}^{\ell}\pm s\alpha p_{\trec}^{\ell}$ generated along the signed directions $\pm p^\ell_{\trec}$ are tested through the same criterion \eqref{e.SDC-nograd}. Once the initial step-size $\alpha:=\alpha^{\ell 0}$ satisfies \eqref{e.SDC-nograd}, $\alpha^{\ell 0}$ is successively expanded by the factor $\gamma_e$ along the same signed direction while the condition remains satisfied, i.e., $\alpha:=\alpha^{\ell t} =\gamma_e\alpha^{\ell (t-1)}$ for $t\in [T]$ with a fixed budget $T$ of iterations. The initial step-size $\alpha^{\ell 0} \in [\alpha_{\min}, \alpha_{\max}]$ is computed by the procedure in Subsection~\ref{S.mutStepSize} via \eqref{eq:bjl-def}, \eqref{e.mutStep} and \eqref{eq.recomStep0} where $q_\alpha:=p^{\ell}_{\trec}$.

Let $\alpha^{\ell\bar t}$ denote the last successful step-size generated by
the extrapolation mechanism. The extrapolation phase terminates either when
the next expanded trial fails the {\tt NDF-SDC} or when the prescribed budget
$T$ is exhausted. Once $\alpha^{\ell\bar t}$ is obtained along either
$+p^\ell_{\trec}$ or $-p^\ell_{\trec}$, \texttt{DAES} updates the current
iterate accordingly. If no successful initial step-size is found in one
direction, the opposite direction is tested; if both signed initial trials
fail, the iterate remains unchanged. Using $\alpha^{\ell\bar t}$, the \textit{\textbf{triangular recombination step-size}} is defined as
$\alpha^{\ell}_{\trec}:=\alpha^{\ell\bar t}$ and the new
\textit{\textbf{triangular recombination point}} is accepted as
\begin{equation}\label{e.RecPoint}
x_{\trec}^{\ell+1}
=
x_{\trec}^{\ell}
\pm
\alpha^{\ell}_{\trec} p_{\trec}^{\ell},
\qquad
x_{\trec}^{0}=x^0.
\end{equation}

\subsection{The {\tt DAES} Algorithm}	

Algorithm~\ref{alg.daes} outlines pseudocode of a single iteration of {\tt DAES}. Each iteration starts by sampling symmetric mutation directions from a standard normal distribution to generate mutation points. These are
grouped into three groups based on function values and sorted. Next, a recombination direction is built for each group and combined via a heuristic
strategy into a single triangular direction. Then, a step-size is selected based on the {\tt NDF-SDC} condition \eqref{e.SDC-nograd}, with at most $T$ extrapolation trials along a successful signed direction. The algorithm
{\tt DAES} distinguishes between successful and unsuccessful iterations. A successful iteration is one in which either \(p_{\trec}^\ell\) or \(-p_{\trec}^\ell\) satisfies the {\tt NDF-SDC} and the iterate is updated.
An unsuccessful iteration is one in which both signed initial trial directions fail the {\tt NDF-SDC}, so the triangular recombination direction is rejected. The process repeats until a termination criterion is met.

Lines 30--34 in {\tt DAES} introduce an additional control mechanism for unsuccessful iterations. This mechanism is required to ensure that unsuccessful iterations are properly regulated, thereby enabling us to derive complexity bounds on the total number of iterations, rather than only on the successful ones. 

To make the control mechanism precise, unsuccessful recombination iterations
contract the scaling step-size by the fixed factor $\rho_u$, while safeguarding
it below by $\alpha_{\min}$; i.e.,
\begin{equation}\label{eq:unsuccessful-sigma-contraction}
\sigma^{\ell+1}:=\max\{\alpha_{\min},\rho_u\sigma^\ell\},
\qquad
\rho_u\in(0,1),
\end{equation}
for $\ell\in\mathcal U_T$, where $\mathcal U_T$ denotes the set of
unsuccessful iterations. This update prevents the scaling parameter from
remaining unchanged above the lower safeguard and provides an explicit
mechanism for reducing the trial scale after an unsuccessful recombination
iteration.

\begin{algorithm}[H]
\setstretch{1.15}
\caption{{\tt DAES}}\label{alg.daes}
\begin{algorithmic}[1]
\State \textbf{Tuning parameters:} $\lambda$, $\ell_{\max}\in\mathbb N$, $\wt\beta\in(0,1)$, $\gamma_e>1$, $\delta>0$, the scaling parameters in
\eqref{eq.recomStep0}, $\rho_u\in(0,1)$, $T\in\Nz$, and $0<\alpha_{\min}<\alpha_{\max}<\infty$;
\State \textbf{Input:} $x_{\trec}^{\ell}$, $\wt f_{\trec}^{\ell}$, and $\sigma^{\ell}$;
\Statex \hrulefill
\For{$i \in [\lambda/2]$}
    \State sample $z^{i\ell}\sim\mathcal{N}(0,I_n)$ and store in $Z^\ell$;
    \State obtain $\alpha^{i\ell} \in [\alpha_{\min}, \alpha_{\max}]$  via \eqref{eq:bjl-def} and  \eqref{e.mutStep} with $q_\alpha:=z^{i\ell}$;
    \State generate $y^{i\ell}_\pm$ via \eqref{e.mutPoints}, evaluate $\wt f(y^{i\ell}_\pm)$ and store in $F^\ell_\pm$;
\EndFor
\State construct $Z^\ell_{\pm}=[Z^\ell\; -Z^\ell]$ and
$D^\ell=[U^\ell\; -U^\ell]$, form
$\wt F^\ell=[F^\ell_+\; F^\ell_-]$, sort $\wt F^\ell$, and apply the same
permutation to $Z^\ell_{\pm}$ and $D^\ell$ to obtain
$(Z^\ell_{\pm})_\pi$ and $D^\ell_\pi$;
\State compute $d^\ell_{\rec k}$ via \eqref{e.RecDirs} and $x^{\ell}_{\rec k}$ via \eqref{e.RecPoints} for $k\in[3]$;
\State obtain $\alpha^{\ell}_{\rec k} \in [\alpha_{\min}, \alpha_{\max}]$ via \eqref{eq:bjl-def} and \eqref{e.mutStep} with $q_\alpha:=d^\ell_{\rec k}$ for $k\in[3]$;
\State construct $\widehat p_{\trec}^\ell$ via \eqref{e.HeuDir_p}, and scale it via \eqref{eq:ptrec-normalized}, resulting in $p_{\trec}^\ell$;
\State obtain initial step-size $\alpha^{\ell 0} \in [\alpha_{\min}, \alpha_{\max}]$ via \eqref{eq:bjl-def} and \eqref{e.mutStep} with $q_\alpha:=p^{\ell}_{\trec}$;
\State set $\mathsf{succ}_\ell:=0$;
\For{$s\in\{+1,-1\}$} \Comment{Search along signed directions}
    \If{$\wt\mu_s(\alpha^{\ell0}) > \wt\beta$}
        \State set $\alpha^\ell_{\trec}:=\alpha^{\ell0}$ and $\mathsf{succ}_\ell:=1$; \textbf{break};
    \EndIf
\EndFor

\If{$\mathsf{succ}_\ell$} \Comment{Extrapolation along selected direction}
    \For{$t\in[T]$}
        \State set $\alpha_{\mathrm{temp}}:=\alpha^\ell_{\trec}$ and
        $\alpha^\ell_{\trec}:=\gamma_e\alpha^\ell_{\trec}$;
        compute
        $\wt f(x^\ell_{\trec}+s\alpha^\ell_{\trec}p^\ell_{\trec})$;
        \If{$\wt\mu_s(\alpha^\ell_{\trec}) \leq \wt\beta$}
            \State set
            $\alpha^\ell_{\trec}:=\alpha_{\mathrm{temp}}$;
            \State \textbf{break};
        \EndIf
    \EndFor
    \State set
    $x^{\ell+1}_{\trec}
    :=x^\ell_{\trec}+s\alpha^\ell_{\trec}p^\ell_{\trec}$
    and
    $\wt f^{\ell+1}_{\trec}
    :=\wt f(x^{\ell+1}_{\trec})$;
\Else
\State set
    $x^{\ell+1}_{\trec}:=x^\ell_{\trec}$ and $\wt f^{\ell+1}_{\trec}:=\wt f^\ell_{\trec}$;
\EndIf
\If{$\mathsf{succ}_\ell$}
    \State update $\sigma^{\ell+1}$ via \eqref{eq.recomStep0};
\Else
    \State reduce $\sigma^{\ell+1}$ via \eqref{eq:unsuccessful-sigma-contraction};
\EndIf
\State \textbf{Output:} $x_{\trec}^{\ell+1}$, $\wt f_{\trec}^{\ell+1}$, and $\sigma^{\ell+1}$;
\end{algorithmic}
\end{algorithm}

\section{Analytical Study of \texttt{DAES}}
	
In this section, we establish high-probability complexity bounds for the proposed \texttt{DAES} algorithm for noisy DFO. To the best of our knowledge, these results constitute the first complexity analysis for a \texttt{MAES}-type method under noisy settings covering nonconvex, convex, and strongly convex objectives, while matching the complexity orders reported in~\cite{VRBBO,VRDFON,bergou2020stochastic,Gratton2015}.

We first analyze an idealized deterministic setting, where the search directions are not subject to sampling randomness, to isolate the role of the angle condition in the convergence analysis. Since DFO has no access to the true gradient, only probabilistic analogues of the angle condition can be established. The deterministic analysis therefore serves solely as a theoretical foundation for the subsequent randomized analysis.

We begin by introducing a set of standard assumptions on compact level \textbf{S}et $\AS$,
\textbf{L}ipschitz continuity of the gradient $\AL$, uniform \textbf{N}oise bound on the objective function $\AN$ which are commonly adopted in DFO studies, see, e.g., \cite{bergou2020stochastic, ELSTER1995, GraRVZ, VRDFON, VRBBO, SDBOX}, and another assumption by which all step-sizes in {\tt DAES} satisfy a two-sided \textbf{B}ound $\AB$ which is also found in \cite{ELSTER1995,VRDFON,SDBOX}. Under these assumptions, we first analyze the deterministic variant of \texttt{DAES} in Section~\ref{sec:comResults}, and then extend the analysis to its randomized variant in Section~\ref{sec:comResults2}.

\begin{assumption}
	$\AS$. 	 Given the starting point $x^0\in \mathbb{R}^n$, the following level set is compact:  
	\begin{equation}\label{e.levelSet}
		\mathcal{L}(x^0):=\{x\in \mathbb{R}^n \mid f(x)\leq f(x^0)\},
	\end{equation}
\end{assumption}
\begin{assumption}
	$\AL$. The objective function $f(x)$ is continuously differentiable on $\mathbb{R}^n$, and its gradient $g(x)$ is Lipschitz continuous; i.e., there exists a constant $L > 0$ such that
	\begin{equation}\label{Li}
		\|g(x)-g(y)\| \leq L~\|x-y\|, \qquad  x,y \in \mathbb{R}^n.
	\end{equation}
\end{assumption}
 Letting $\ul{f}:=f(\ul{x})=\inf \{f(x) \mid x\in \mathcal{L}(x^0) \}> -\infty$, where $\ul{x}\in \mathcal{L}(x^0)$ is a global minimizer, these assumptions imply that $f(x)$ is bounded below by $\ul{f}$. Moreover, from \eqref{Li},
\begin{align}
	|f(x+\hat s)-f(x)-\hat s^Tg(x)| &\leq \frac{L}{2}\|\hat s\|^2, \qquad \hat s \in \mathbb{R}^n. \label{e.TayErr1} 
\end{align}

\begin{assumption} 
	$\AN$. For some noise level $0<\omega <\infty$, the noisy function $\wt f(x) $ satisfies   
	\begin{equation}\label{e.noisecon}
		|\wt f(x)-f(x)|\le \omega, \qquad x \in \mathbb{R}^n.
	\end{equation}
\end{assumption}
For solving a DFO problem such as \eqref{e.prob} under Assumption $\AN$, we can only expect to find an \emph{$\varepsilon_\omega$-approximate stationary point} for some \emph{limit accuracy} $\varepsilon_\omega>0$. In other words, in such a scenario, the \emph{complexity bound} of an algorithm is defined as an upper bound on the number of function evaluations required to find a point satisfying the following condition, where $x$ is called the $\varepsilon_\omega$-approximate stationary point:
\begin{equation}\label{eq:approx_stationarity}
	f(x) \le \sup \left\{ f(y) \,\middle|\, y \in \mathcal{L}(x^0),\quad \|g(y)\| \le \varepsilon_\omega \right\}.
\end{equation}

\begin{assumption}\label{ass:AB}
	$\AB$.
All step-sizes generated within the three phases of \texttt{DAES} lie in a noise-calibrated range. In particular, for any step-size $\alpha$ produced by the algorithm, it holds
\begin{equation}\label{e.alpnoise}
\underline{\kappa}^\prime \sqrt{\omega L^{-1}}
		\le
		\alpha_{\min}
		\le
		\alpha
		\le
		\alpha_{\max}
		\le 
		\overline{\kappa}^\prime\sqrt{\omega L^{-1}},
\end{equation}
where $\omega$ denotes the noise level, and
$0<\underline{\kappa}^\prime\le \overline{\kappa}^\prime$ are tuning constants such that
\begin{equation}\label{e.kappabd}
		\underline{\kappa}^\prime
		\;>\;
		\sqrt{{2L}{\wt{\beta}}^{-1}}
		\;>\;0, \qquad \text{for some}\qquad \wt\beta\in(0,1).
\end{equation}
\end{assumption}

\subsection{Complexity with Deterministic Directions}\label{sec:comResults}

Let $x:=x_{\trec}^\ell$ and $p:=p_{\trec}^\ell$ for simplicity. We first favor a reproducible direction $p$, enabling a systematic exploration of the local geometry of $f(x)$ without additional sampling randomness. In DFO, a sufficient decrease along such a direction does not reliably reflect the norm of $g(x)$. Thus, an alignment (angle) condition is needed:
\begin{equation}\label{eq:angle_condition}
|g(x)^T p| \ge \Delta_{\angle}\,\|g(x)\|\,\|p\|, \qquad \text{for some} \quad \Delta_{\angle}>0.
\end{equation}
To quantify this geometric effect, we define
\begin{equation}\label{eq:wxp}
v(x,p):=\frac{\|g(x)\|\,\|p\|}{|g(x)^Tp|}, \qquad |g(x)^Tp|\neq0.
\end{equation}
Small values of \(v(x,p)\) indicate an acute angle between \(p\) and \(g(x)\). This quantity links directional decrease bounds (Proposition~\ref{prop:grad-inner-sdc}) to gradient-norm bounds (Theorem~\ref{thm:gradnorm-bound2}), and then to a posteriori accuracy guarantees under convexity (Theorem~\ref{t.c-noise1}).
\begin{proposition}[\textbf{Directional Derivative Bound}]\label{prop:grad-inner-sdc} 
Under Assumptions~$\AS$, $\AL$, $\AN$, and $\AB$, let $p\in\mathbb{R}^n$ be the search direction in {\tt DAES} with $\|p\|=\delta$, and let $\alpha>0$ be the corresponding step-size. Then
\begin{equation}\label{eq:upBound}
|g(x)^T p|
\le
\gamma_e\alpha\Big(\wt\beta+\frac{L}{2}\delta^2\Big)
+
\frac{2\omega}{\alpha},\qquad \text{for some}\quad \wt\beta\in(0,1), \quad \gamma_e>1.
\end{equation}
\end{proposition}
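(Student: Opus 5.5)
The bound is obtained from a \emph{failure} of the {\tt NDF-SDC} \eqref{e.SDC-nograd} at a step-size comparable to $\alpha$. This failure is then combined with the Taylor bound \eqref{e.TayErr1} and the noise bound \eqref{e.noisecon}. The step-size $\alpha$ in the statement is the one associated with $p$ at the current iterate: either the accepted $\alpha^\ell_{\trec}$ or the initial trial $\alpha^{\ell0}$. In every branch of Algorithm~\ref{alg.daes}, some trial step $\bar\alpha\in\{\alpha,\gamma_e\alpha\}$, hence $\bar\alpha\le\gamma_e\alpha$, fails the condition along each signed direction $s p$ that is relevant. The cases are as follows.

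\begin{itemize}
\item In an unsuccessful iteration, both $s=\pm1$ fail at $\bar\alpha=\alpha^{\ell0}$.
\item In a successful iteration where extrapolation stopped early, the expanded step $\gamma_e\alpha^\ell_{\trec}$ fails along the chosen sign.
\item If the budget $T$ is exhausted, the bound has to come from the other sign, or from the last successful test; this is handled below.
\end{itemize}

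\textbf{Key step.} Suppose $\wt\mu_s(\bar\alpha)\le\wt\beta$. This means $\wt f(x+s\bar\alpha p)-\wt f(x)\ge -\wt\beta\bar\alpha^2$. Replace noisy values by true values using $\AN$, which costs $2\omega$:
\[
f(x+s\bar\alpha p)-f(x)\ge -\wt\beta\bar\alpha^2-2\omega .
\]
Then \eqref{e.TayErr1} with $\hat s=s\bar\alpha p$ and $\|p\|=\delta$ gives
\[
f(x+s\bar\alpha p)-f(x)\le s\bar\alpha\, g(x)^Tp+\tfrac L2\bar\alpha^2\delta^2 .
\]
Combining the two and dividing by $\bar\alpha$:
\[
-s\, g(x)^Tp\le \bar\alpha\big(\wt\beta+\tfrac L2\delta^2\big)+\tfrac{2\omega}{\bar\alpha}.
\]
Now use $\bar\alpha\le\gamma_e\alpha$ in the first term. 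In the second term use $\bar\alpha\ge\alpha$ when $\bar\alpha=\gamma_e\alpha$; when $\bar\alpha=\alpha$ the bound holds directly. The result is \eqref{eq:upBound} for the quantity $-s\,g(x)^Tp$.

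\textbf{Passing to the absolute value.} If the failing sign satisfies $s\,g(x)^Tp\le 0$, this already bounds $|g(x)^Tp|$. Otherwise $s\,g(x)^Tp>0$, so $s p$ is an ascent direction. Then $-sp$ is a descent direction, and the opposite sign should be used. In an unsuccessful iteration both signs fail, so the sign with $s\,g^Tp\le0$ is always available. In a successful iteration the accepted sign satisfied $\wt\mu_s>\wt\beta$. Together with the noise bound and Taylor, this forces $s\,g^Tp\le -\wt\beta\alpha+2\omega/\alpha+\tfrac L2\alpha\delta^2$. So either $s\,g^Tp\le 0$, in which case the extrapolation failure bounds $|g^Tp|$, or $s\,g^Tp>0$, in which case the same inequality bounds it, since $\wt\beta\alpha\ge0$.

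\textbf{Main obstacle.} The hardest case is exhaustion of the extrapolation budget $T$ with no failing trial. I would first try to argue that $\AB$ excludes it. Since $\alpha\le\alpha_{\max}\le\overline\kappa'\sqrt{\omega/L}$, a success at $\gamma_e^T\alpha^{\ell0}$ cannot occur indefinitely. If that argument does not close, the fallback is the convention that the algorithm's last test along the chosen sign counts as the "failure" witness, in which case \eqref{eq:upBound} is interpreted with $\alpha$ the final tested step. $\AS$ is used only to keep iterates in $\mathcal L(x^0)$.
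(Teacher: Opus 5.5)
Your argument for the two cases in which a failed trial exists matches the paper's Case~1 and Cases~2A/2B, with the same constants. In an unsuccessful iteration both signs fail at $\alpha^{\ell0}$. In a successful iteration whose extrapolation stops at a failed expansion, the success at the last accepted $\alpha$ bounds $s\,g(x)^Tp$ from above, and the failure at $\gamma_e\alpha$ bounds $-s\,g(x)^Tp$.

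The paper never treats the branch you flag. Its Case~2 simply asserts that the next expanded trial fails ``by construction'', which is false when all $T$ expansions succeed. So the obstacle is real, but neither of your patches removes it, and your proposal does not prove the claim in that branch.

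\textbf{The $\AB$ route.} As phrased it does not work. $T$ is finite, so the question is not whether success can continue indefinitely but whether $T$ consecutive expansions can succeed. $\AB$ excludes that only under an extra parameter condition such as $\gamma_e^{T}\alpha_{\min}>\alpha_{\max}$. Under that condition the $T$-th expanded step, which is at least $\gamma_e^{T}\alpha_{\min}$, would violate $\AB$, so some earlier expansion must fail. That condition is not among the hypotheses.

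\textbf{The fallback convention.} It is invalid. In this branch the last test is a success, and a success only gives an upper bound on $s\,g(x)^Tp$. Likewise, when $s=-1$ is accepted after $s=+1$ failed at $\alpha^{\ell0}$, that failure bounds the same one-sided quantity $s\,g(x)^Tp$. Nothing bounds the descent magnitude $-s\,g(x)^Tp$.

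\textbf{The inequality genuinely fails there.} Take $f(y)=\frac L2\|y\|^2$ with exact evaluations, which is admissible under $\AN$ for every $\omega>0$. Take an iterate with $|x^Tp|$ large and $\gamma_e^{T}\alpha^{\ell0}\le\alpha_{\max}$. Then every trial along the descent sign passes the \texttt{NDF-SDC}, and all steps stay in $[\alpha_{\min},\alpha_{\max}]$. Yet $|g(x)^Tp|=L|x^Tp|$ exceeds the right-hand side, which is bounded by $\gamma_e\alpha_{\max}\big(\wt\beta+\frac L2\delta^2\big)+2\omega/\alpha_{\min}$ independently of $x$.

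\textbf{The fix.} Either restrict the proposition to iterations that end with a failed test, which is what the paper's proof implicitly does. Or add a hypothesis such as $\gamma_e^{T}\alpha_{\min}>\alpha_{\max}$, under which your $\AB$-based argument makes the exhaustion branch vacuous.
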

\begin{proof} We consider two possible cases for the $\ell$-th iteration of the  {\tt DAES}.
\\
\noindent{\sc Case 1.} Let both trial points $x\pm\alpha p$ fail the {\tt NDF-SDC}. In this case, we have $\wt\mu_s(\alpha)\le\wt\beta$, i.e., $\wt f(x+s\alpha p)-\wt f(x)\ge -\wt\beta\alpha^2$ for $s\in\{-1,1\}$. Using this and \eqref{e.TayErr1}, \eqref{e.noisecon}, we obtain
\[
-\wt\beta\alpha^2
\le
\wt f(x+s\alpha p)-\wt f(x)
\le
f(x+s\alpha p)-f(x)+2\omega
\le
s\alpha g(x)^Tp+\D\frac{L}{2}\alpha^2\delta^2+2\omega .
\]
Therefore
$$-s\,g(x)^Tp
\le
\alpha\Big(\wt\beta+\D\frac{L}{2}\delta^2\Big)
+
\frac{2\omega}{\alpha}
\le
\alpha\gamma_e\Big(\wt\beta+\D\frac{L}{2}\delta^2\Big)
+
\frac{2\omega}{\alpha}.$$
Applying the above inequality with both $s=+1$ and $s=-1$ yields \eqref{eq:upBound}.

\noindent{\sc Case 2.} Let $\alpha$ denote the last successful step-size
obtained along some $s\in\{-1,1\}$. By construction,
$x+s\alpha p$ satisfies the {\tt NDF-SDC}, whereas the next extrapolated
trial point $x+s\gamma_e\alpha p$ does not; i.e.,
$\wt\mu_s(\alpha)>\wt\beta$ and
$\wt\mu_s(\gamma_e\alpha)\le\wt\beta$.

\noindent{\sc Case 2A.} Let $\wt\mu_1(\alpha)>\wt\beta$, i.e., $\wt f(x+\alpha p)-\wt f(x)<-\wt\beta\alpha^2$. 
By this and \eqref{e.TayErr1}, \eqref{e.noisecon}, we have
\[
\alpha g(x)^Tp-\frac{L}{2}\alpha^2\delta^2
\le
f(x+\alpha p)-f(x)
\le
\wt f(x+\alpha p)-\wt f(x)+2\omega
<
-\wt\beta\alpha^2+2\omega.
\]
Therefore
\begin{equation}\label{e.cb1}
g(x)^Tp
<
-\wt\beta\alpha+\frac{L}{2}\alpha\delta^2+\frac{2\omega}{\alpha}
<
\alpha\Big(\wt\beta+\frac{L}{2}\delta^2\Big)
+
\frac{2\omega}{\alpha}
<
\alpha\gamma_e\Big(\wt\beta+\frac{L}{2}\delta^2\Big)
+
\frac{2\omega}{\alpha}.
\end{equation}
By construction, since $\wt\mu_1(\gamma_e\alpha)\le\wt\beta$, i.e., $
\wt f(x+\gamma_e\alpha p)-\wt f(x) \ge -\wt\beta\gamma_e^2\alpha^2$, \eqref{e.TayErr1} and \eqref{e.noisecon}, we have
\[
-\wt\beta\gamma_e^2\alpha^2
\le
\wt f(x+\gamma_e\alpha p)-\wt f(x)
\le
f(x+\gamma_e\alpha p)-f(x)+2\omega
\le
\gamma_e\alpha g(x)^Tp
+
\frac{L}{2}\gamma_e^2\alpha^2\delta^2
+
2\omega .
\]
Therefore
\begin{equation}\label{e.cb2}
   -g(x)^Tp
\le
\gamma_e\alpha\Big(\wt\beta+\frac{L}{2}\delta^2\Big)
+
\frac{2\omega}{\gamma_e\alpha}
\le
\gamma_e\alpha\Big(\wt\beta+\frac{L}{2}\delta^2\Big)
+
\frac{2\omega}{\alpha}. 
\end{equation}
Combining one-sided bounds from \eqref{e.cb1} and \eqref{e.cb2} yields \eqref{eq:upBound}.

\noindent{\sc Case 2B.} Let $\wt\mu_{-1}(\alpha)>\wt\beta$, i.e., $\wt f(x-\alpha p)-\wt f(x)<-\wt\beta\alpha^2$. 
By this and \eqref{e.TayErr1}, \eqref{e.noisecon}, we end up with
\begin{equation}\label{e.cb3}
-g(x)^Tp
<
-\wt\beta\alpha+\frac{L}{2}\alpha\delta^2+\frac{2\omega}{\alpha}
<
\gamma_e\alpha\Big(\wt\beta+\frac{L}{2}\delta^2\Big)
+
\frac{2\omega}{\alpha}.
\end{equation}
By construction, since $\wt\mu_{-1}(\gamma_e\alpha)\le\wt\beta$, i.e., $
\wt f(x-\gamma_e\alpha p)-\wt f(x) \ge -\wt\beta\gamma_e^2\alpha^2$, \eqref{e.TayErr1} and \eqref{e.noisecon}, we end up with
\begin{equation}\label{e.cb4}
 g(x)^Tp
\le
\gamma_e\alpha\Big(\wt\beta+\frac{L}{2}\delta^2\Big)
+
\frac{2\omega}{\gamma_e\alpha}
\le
\gamma_e\alpha\Big(\wt\beta+\frac{L}{2}\delta^2\Big)
+
\frac{2\omega}{\alpha}.
\end{equation}
Combining \eqref{e.cb3} and \eqref{e.cb4} gives \eqref{eq:upBound}, which completes the proof.
\end{proof}


\begin{theorem}[\textbf{Local bound on $\|g(x)\|$}] \label{thm:gradnorm-bound}
Under Assumptions~$\AS$, $\AL$, $\AN$, and $\AB$, suppose that the conditions of Proposition~\ref{prop:grad-inner-sdc} hold at all relevant
iterations. Then 
\begin{equation} \label{eq:gnormbdup}
		\|g(x)\| \le  v(x, p)\,\Gamma(\alpha),\qquad \text{where}
\end{equation}
\begin{equation} \label{eq:Gamma}
		\Gamma(\alpha) :=
		\frac{\alpha}{\delta}\Big(\gamma_e\big(\wt\beta + \frac{L}{2}\delta^2\big)
		+ \frac{2\omega}{\alpha^2}\Big).
\end{equation}
Here, $v(x, p)$ is defined as in \eqref{eq:wxp}. Moreover, it holds
$\|g(x)\| = \Oc\big(v(x,p)\sqrt{L\omega}\big)$. In addition, if the angle condition
\eqref{eq:angle_condition} holds, then
\begin{equation} \label{eq:gnorm-opt_s}
	\|g(x)\| = \Oc\big(\Delta_{\angle}^{-1}\sqrt{L\omega}\big).
\end{equation}
\end{theorem}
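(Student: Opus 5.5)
The plan is to derive \eqref{eq:gnormbdup} directly from Proposition~\ref{prop:grad-inner-sdc} and the definition \eqref{eq:wxp}. Then \eqref{e.alpnoise} will turn $\Gamma(\alpha)$ into a multiple of $\sqrt{L\omega}$, and the angle condition \eqref{eq:angle_condition} will give a uniform bound on $v(x,p)$.

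\textbf{Step 1.} Assume $g(x)^Tp\neq0$; otherwise $v(x,p)$ is undefined. By \eqref{eq:wxp} and $\|p\|=\delta$,
\[
\|g(x)\|=v(x,p)\,\frac{|g(x)^Tp|}{\delta}.
\]
Inserting the bound \eqref{eq:upBound} gives
\[
\|g(x)\|\le \frac{v(x,p)}{\delta}\Big(\gamma_e\alpha\big(\wt\beta+\tfrac{L}{2}\delta^2\big)+\frac{2\omega}{\alpha}\Big)=v(x,p)\,\Gamma(\alpha),
\]
after factoring out $\alpha$. This is exactly \eqref{eq:Gamma}. The step is purely algebraic, and it is valid in every case treated in Proposition~\ref{prop:grad-inner-sdc}, both successful and unsuccessful iterations.

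\textbf{Step 2.} Next I bound $\Gamma(\alpha)$ by $\Oc(\sqrt{L\omega})$ using Assumption~$\AB$. Since $\alpha\le\overline\kappa'\sqrt{\omega/L}$, the first term satisfies
\[
\frac{\gamma_e\alpha}{\delta}\big(\wt\beta+\tfrac{L}{2}\delta^2\big)\le \frac{\gamma_e\overline\kappa'}{\delta}\big(\wt\beta+\tfrac{L}{2}\delta^2\big)\sqrt{\omega/L}.
\]
Since $\alpha\ge\underline\kappa'\sqrt{\omega/L}$, the second term satisfies
\[
\frac{2\omega}{\delta\alpha}\le \frac{2}{\delta\underline\kappa'}\sqrt{L\omega}.
\]
To express the first term as a multiple of $\sqrt{L\omega}$, I write $\wt\beta\sqrt{\omega/L}=(\wt\beta/L)\sqrt{L\omega}$. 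Then the first term becomes $\frac{\gamma_e\overline\kappa'}{\delta}\big(\frac{\wt\beta}{L}+\frac{\delta^2}{2}\big)\sqrt{L\omega}$.

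\textbf{Main obstacle.} The delicate point is this constant bookkeeping. The hidden constant depends on $\gamma_e$, $\delta$, $\wt\beta/L$, $\overline\kappa'$ and $1/\underline\kappa'$. I will treat all of these as fixed algorithmic or problem constants, so they are absorbed into $\Oc(\cdot)$. Condition \eqref{e.kappabd} keeps $1/\underline\kappa'$ bounded, namely $1/\underline\kappa'<\sqrt{\wt\beta/(2L)}$. The $\delta$-dependence is harmless because $\delta$ is fixed; optionally, $\delta$ could be chosen as $\Theta(L^{-1/2})$ to balance the two terms. With this, $\|g(x)\|=\Oc(v(x,p)\sqrt{L\omega})$.

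\textbf{Step 3.} Finally, if \eqref{eq:angle_condition} holds, then $|g(x)^Tp|\ge\Delta_{\angle}\|g(x)\|\|p\|$, so $v(x,p)\le\Delta_{\angle}^{-1}$ whenever $g(x)\neq0$. If $g(x)=0$, the claim is trivial. Substituting this into Step 2 yields \eqref{eq:gnorm-opt_s}.
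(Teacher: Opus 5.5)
Your proposal is correct and follows the paper's proof: you substitute the bound of Proposition~\ref{prop:grad-inner-sdc} into $\|g(x)\|=v(x,p)\,|g(x)^Tp|/\delta$, use $\alpha=\Theta(\sqrt{\omega/L})$ from $\AB$ to get $\Gamma(\alpha)=\Oc(\sqrt{L\omega})$, and use $v(x,p)\le\Delta_{\angle}^{-1}$ under the angle condition. Your explicit constant bookkeeping and your treatment of the degenerate cases $g(x)^Tp=0$ and $g(x)=0$ are somewhat more careful than the paper's version; the constants you obtain match the ones the paper later collects into $C_u$ in the proof of Theorem~\ref{t.comf-noise1}.
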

\begin{proof}
	Using the geometric factor \eqref{eq:wxp} and the upper bound in \eqref{eq:upBound}, we have
	\begin{eqnarray*}
		\|g(x)\| 
		= v(x, p)  \frac{|g(x)^T p|}{\|p\|} 
		\le v(x, p) \Big( \frac{\gamma_e\alpha}{\|p\|} \big(\wt\beta+ \frac{L}{2}\|p\|^2 \big)
		+ \frac{2\omega}{\alpha\|p\|} \Big).
	\end{eqnarray*}
	By $\delta:=\|p\|$ and \eqref{eq:Gamma}, the upper bound \eqref{eq:gnormbdup} is obtained. By $\AB$, $\alpha = \Theta\!\left(\sqrt{\omega L^{-1}}\right)$, which yields $\Gamma(\alpha)=\Oc(\sqrt{L\omega})$ and therefore $\|g(x)\| = \Oc\big(v(x,p)\sqrt{L\omega}\big)$. Invoking the angle condition \eqref{eq:angle_condition}, we have $v(x,p)\le \Delta_{\angle}^{-1}$, and the claim \eqref{eq:gnorm-opt_s} follows. 
\end{proof}

\begin{theorem}[\textbf{Global bound on $\|g(x)\|$}]\label{thm:gradnorm-bound2}
	Let $\ell_T$ denote the termination iteration of the \texttt{DAES} algorithm. Under Assumptions~$\AS$, $\AL$, $\AN$, and $\AB$, suppose that the conditions of Proposition~\ref{prop:grad-inner-sdc} hold for all iterations $\ell \in [\ell_T]_0$. Then
	\begin{equation} \label{e.upnormg}
		\min_{0\le\ell\le \ell_T}\|g(x^\ell)\| 
		\le 
		v_{\min}\max_{0\le \ell \le \ell_T} \Gamma(\alpha^\ell), \qquad \text{where} \qquad v_{\min} := \min_{0\le\ell\le {\ell_T}} v(x^\ell, p^\ell),
	\end{equation}
	with the $\ell$-th geometric factor $v(x^\ell, p^\ell)$ defined in \eqref{eq:wxp} and $\Gamma(\alpha^\ell)$ as in \eqref{eq:Gamma}. Moreover, 
	\begin{equation} \label{e.upnormgomega} 
		\min_{0\le\ell\le \ell_T}\|g(x^\ell)\| 
		= \Oc\!\left(
		v_{\min}\,\sqrt{L\omega}
		\right).
	\end{equation}	
	In particular, if the search direction $p^\ell$ satisfies the angle condition \eqref{eq:angle_condition} for all $\ell \in [\ell_T]_0$, then
	\begin{equation} \label{e.upnormgomegaangle}
		\min_{0\le\ell\le \ell_T}\|g(x^\ell)\| 
		= \Oc\big(\Delta_{\angle}^{-1}\sqrt{L\omega}\big).
	\end{equation}
\end{theorem}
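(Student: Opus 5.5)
The bound follows by applying Theorem~\ref{thm:gradnorm-bound} at a single, well-chosen iteration rather than trying to combine the local bounds over all iterations. Since the hypotheses of Proposition~\ref{prop:grad-inner-sdc} are assumed at every $\ell\in[\ell_T]_0$, Theorem~\ref{thm:gradnorm-bound} applies with $x:=x^\ell$, $p:=p^\ell$ and $\alpha:=\alpha^\ell$. It gives, for each such $\ell$,
\[
\|g(x^\ell)\|\le v(x^\ell,p^\ell)\,\Gamma(\alpha^\ell).
\]
The index set is finite, so I pick $\ell^\star\in[\ell_T]_0$ with $v(x^{\ell^\star},p^{\ell^\star})=v_{\min}$. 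Then
\[
\min_{0\le\ell\le\ell_T}\|g(x^\ell)\|\le\|g(x^{\ell^\star})\|\le v_{\min}\,\Gamma(\alpha^{\ell^\star})\le v_{\min}\max_{0\le\ell\le\ell_T}\Gamma(\alpha^\ell),
\]
which is \eqref{e.upnormg}.

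For \eqref{e.upnormgomega}, I will bound $\Gamma$ uniformly in $\ell$ using $\AB$. Every step-size satisfies $\underline{\kappa}'\sqrt{\omega L^{-1}}\le\alpha^\ell\le\overline{\kappa}'\sqrt{\omega L^{-1}}$. The first term of \eqref{eq:Gamma} is then at most $\overline{\kappa}'\sqrt{\omega L^{-1}}\,\delta^{-1}\gamma_e(\wt\beta+\tfrac L2\delta^2)$. The second term, $2\omega/(\alpha^\ell\delta)$, is at most $2\sqrt{L\omega}/(\underline{\kappa}'\delta)$. Following the convention already used in Theorem~\ref{thm:gradnorm-bound}, I treat $\delta$, $\gamma_e$, $\wt\beta$, $\underline{\kappa}'$, $\overline{\kappa}'$ and $L$ as fixed constants. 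With that convention both terms are $\Oc(\sqrt{L\omega})$ with constants independent of $\ell$, so $\max_\ell\Gamma(\alpha^\ell)=\Oc(\sqrt{L\omega})$ and \eqref{e.upnormgomega} follows. For \eqref{e.upnormgomegaangle}, the angle condition \eqref{eq:angle_condition} at every iteration gives $v(x^\ell,p^\ell)\le\Delta_{\angle}^{-1}$ for all $\ell$, hence $v_{\min}\le\Delta_{\angle}^{-1}$, and I substitute this into the previous bound.

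The only delicate point is that $v(x^\ell,p^\ell)$ is defined only when $g(x^\ell)^Tp^\ell\neq0$. If $g(x^\ell)=0$ for some $\ell$, the left-hand side of \eqref{e.upnormg} is $0$ and every claim holds trivially. Otherwise I read $v_{\min}$ as a minimum over the iterations where $v$ is defined. This set is nonempty under the angle-condition hypothesis, and is implicitly assumed nonempty in the general statement; the argument above goes through verbatim on that index set, since the minimum of $\|g\|$ over all iterations is at most the minimum over any subset.
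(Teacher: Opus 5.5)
Your proposal is correct and follows the paper's proof essentially step for step. You fix the iteration $\ell^\star$ attaining $v_{\min}$, apply \eqref{eq:gnormbdup} there, bound $\Gamma(\alpha^{\ell^\star})$ by $\max_\ell\Gamma(\alpha^\ell)$, use $\AB$ to get $\Gamma(\alpha^\ell)=\Oc(\sqrt{L\omega})$, and use $v_{\min}\le\Delta_{\angle}^{-1}$ under the angle condition. Your explicit constants for the two terms of $\Gamma$ and your treatment of iterations with $g(x^\ell)^Tp^\ell=0$ are minor refinements that the paper leaves implicit.
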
 
\begin{proof}
Let $v_{\min}$ in \eqref{e.upnormg} be attained at iteration $\ell^*$. Thus, $\|g(x^{\ell^*})\| \le v_{\min} \Gamma(\alpha^{\ell^*})$, from \eqref{eq:gnormbdup}. It follows immediately from the inequality below, which proves \eqref{e.upnormg}:
\begin{equation}\label{minv}
    \min_{0\le\ell\le \ell_T}\|g(x^\ell)\|
	\le
	v_{\min}\max_{0\le \ell \le \ell_T} \Gamma(\alpha^\ell).
\end{equation}
By substituting $\alpha^\ell = \Theta\!\left(\sqrt{\omega L^{-1}}\right)$ from $\AB$
into $\Gamma(\alpha^\ell)$ in \eqref{eq:Gamma}, since $\Gamma(\alpha^\ell) = \Oc\!\left(\sqrt{L\omega}\right)$ and thus $\max_{0\le \ell \le \ell_T} \Gamma(\alpha^\ell) = \Oc\!\left(\sqrt{L\omega}\right)$, the claim \eqref{e.upnormgomega} is obtained. In particular, invoking \eqref{eq:angle_condition} where we have
$v_{\min}\le \Delta_{\angle}^{-1}$, the claim
\eqref{e.upnormgomegaangle} follows.
\end{proof}
\begin{theorem}[\textbf{Error Bounds}] \label{t.c-noise1}
Under Assumptions~$\AS$, $\AL$, $\AN$, and $\AB$, suppose that the conditions of Proposition~\ref{prop:grad-inner-sdc} hold iterations. Let the minimum gradient norm \eqref{e.upnormgomega} occur at iteration $ \ell^{**}$ of the {\tt DAES} algorithm. Then, the following guarantees hold with the limit accuracy
		\begin{equation}\label{e.varomega}
	\varepsilon_\omega := \Oc\big(\sqrt{L\omega}\big).
		\end{equation}
                
	\begin{enumerate}[label=(\roman*)]
		\item If \( f \) is convex, then
		\begin{equation}\label{e.diffconvex}
			f(x^{\ell^{**}}) - \ul f = \Oc\big(v_{\min}\varepsilon_\omega\big).
		\end{equation}
		
		\item If \( f \) is \( \mu_c \)-strongly convex, then
		\begin{equation}\label{e.diffsconvex}
			f(x^{\ell^{**}}) - \ul f 
			= \Oc\big(v_{\min}^2 \varepsilon_\omega^2 / \mu_c\big), 
			\qquad \text{and} \qquad
			\|x^{\ell^{**}} - \ul x\| 
			= \Oc\big(v_{\min}\varepsilon_\omega / \mu_c\big).
		\end{equation}
		
		In particular, if the deterministic angle condition \eqref{eq:angle_condition} holds and
		
		\item if \( f \) is convex, then
		\begin{equation}\label{e.diffconvex1}
			f(x^{\ell^{**}}) - \ul f = \Oc\big(\Delta_{\angle}^{-1}\varepsilon_\omega\big).
		\end{equation}
		
		\item if \( f \) is \( \mu_c \)-strongly convex, then
		\begin{equation}\label{e.diffsconvex1}
			f(x^{\ell^{**}}) - \ul f 
			= \Oc\big(\Delta_{\angle}^{-2} \varepsilon_\omega^2 / \mu_c\big), 
			\qquad \text{and} \qquad
			\|x^{\ell^{**}} - \ul x\| 
			= \Oc\big(\Delta_{\angle}^{-1}\varepsilon_\omega / \mu_c\big).
		\end{equation}
	\end{enumerate}	
\end{theorem}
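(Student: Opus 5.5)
The plan is to reduce all four claims to the gradient-norm bound already established in Theorem~\ref{thm:gradnorm-bound2}. We then convert it into function-value and distance bounds using standard inequalities for convex and strongly convex functions.

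First, by \eqref{e.upnormg}--\eqref{e.upnormgomega} and the definition of $\ell^{**}$ as the iteration attaining the minimum gradient norm, we have
\[
\|g(x^{\ell^{**}})\| \le v_{\min}\max_{\ell}\Gamma(\alpha^\ell)=\Oc\big(v_{\min}\sqrt{L\omega}\big)=\Oc\big(v_{\min}\varepsilon_\omega\big),
\]
where $\varepsilon_\omega$ is given by \eqref{e.varomega}. Under the angle condition \eqref{eq:angle_condition} we have $v_{\min}\le\Delta_{\angle}^{-1}$, so $v_{\min}$ may be replaced by $\Delta_{\angle}^{-1}$. This replacement yields (iii) and (iv) directly from (i) and (ii), so only (i) and (ii) need proof.

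For (i), convexity gives $f(x)-\ul f\le g(x)^T(x-\ul x)\le\|g(x)\|\,\|x-\ul x\|$. It remains to bound $\|x^{\ell^{**}}-\ul x\|$ by a constant. Both points lie in $\mathcal L(x^0)$, which is compact by $\AS$, so this distance is at most $\operatorname{diam}\mathcal L(x^0)$. The step that requires care, and the main obstacle, is that $x^{\ell^{**}}\in\mathcal L(x^0)$. Accepted steps satisfy the {\tt NDF-SDC}, but only the noisy values decrease. The true value can increase by up to $2\omega$ per accepted step, so monotonicity of $f$ is not automatic.

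I would handle this obstacle in one of two ways. The first is to use $\AB$: by \eqref{e.kappabd}, $\wt\beta\alpha^2\ge\wt\beta\underline\kappa'^2\omega/L>2\omega$. Consequently every accepted step gives a true decrease $f(x^{\ell+1})-f(x^\ell)<-\wt\beta\alpha^2+2\omega<0$, so the iterates stay in $\mathcal L(x^0)$. Failing that, one can interpret the $\Oc$ as absorbing the diameter of a slightly enlarged compact set. With the diameter treated as a constant, we obtain $f(x^{\ell^{**}})-\ul f=\Oc(v_{\min}\varepsilon_\omega)$.

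For (ii), $\mu_c$-strong convexity gives the Polyak--Łojasiewicz inequality $f(x)-\ul f\le \|g(x)\|^2/(2\mu_c)$, which yields the first bound $\Oc(v_{\min}^2\varepsilon_\omega^2/\mu_c)$. For the distance, we use strong monotonicity of the gradient with $g(\ul x)=0$:
\[
\mu_c\|x-\ul x\|^2\le (g(x)-g(\ul x))^T(x-\ul x)\le\|g(x)\|\,\|x-\ul x\|.
\]
This gives $\|x-\ul x\|\le\|g(x)\|/\mu_c=\Oc(v_{\min}\varepsilon_\omega/\mu_c)$. Here $\ul x$ is the unique global minimizer, and it is stationary.
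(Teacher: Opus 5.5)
Your proposal is correct and follows the paper's proof essentially step for step. You bound $\|g(x^{\ell^{**}})\|$ by $\Oc(v_{\min}\varepsilon_\omega)$ via \eqref{e.upnormgomega}, use convexity with Cauchy--Schwarz and compactness of $\mathcal L(x^0)$ for (i), use the gradient-dominance and strong-monotonicity inequalities for (ii), and substitute $v_{\min}\le\Delta_{\angle}^{-1}$ for (iii)--(iv). The paper does the same, except that it merely asserts $x^{\ell^{**}}\in\mathcal L(x^0)$, whereas your first argument justifies it: $\wt\beta\alpha^2\ge\wt\beta(\underline{\kappa}')^2\omega/L>2\omega$, so every accepted step strictly decreases $f$. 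That argument is valid and is precisely the content of Lemma~\ref{lem:noisy-to-noiseless}, so the vaguer fallback is unnecessary.
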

\begin{proof}
	Let $\ul x$ be a global minimizer, i.e., $g(\ul x)=0$, and \( \{x^\ell\}_{\ell\in[\ell_T]_0} \) lies in a compact set \(\mathcal{L}(x^0)\). 
	
	(i) By convexity of $f$, we have $f(x) - f(y) \le g(x)^T (x - y)$. Applying this with $x = x^{\ell^{**}}$ and $y = \ul x$ and using the Cauchy--Schwarz inequality yields $	f(x^{\ell^{**}}) - \ul f \le  \|x^{\ell^{**}} - \ul x\| \cdot \|g(x^{\ell^{**}})\|$. 
	Here, $\|x^{\ell^{**}} - \ul x\|$ is bounded as $\mathcal{L}(x^0)$ is compact: thus, $\|x^{\ell^{**}} - \ul x\| \le D$ for $D>0$. Invoking \eqref{e.upnormgomega}, we obtain $\|g(x^{\ell^{**}})\| = \Oc(v_{\min}\varepsilon_\omega)$ which completes the proof of \eqref{e.diffconvex}.

	(ii) If $f$ is $\mu_c$-strongly convex, then $f(x) - \ul f \le 1/2{\mu_c}^{-1} \|g(x)\|^2$, based on the Polyak--Łojasiewicz inequality. Substituting $x = x^{\ell^{**}}$ into this inequality, applying strong convexity which implies $\|x^{\ell^{**}} - \ul x\| 
	\le 
	\D{\mu_c}^{-1}\|g(x^{\ell^{**}})\|$, and using \eqref{e.upnormgomega}, \eqref{e.varomega}, the claim \eqref{e.diffsconvex} follows.
    
    In particular, if \eqref{eq:angle_condition} holds where $v_{\min} \le \Delta_{\angle}^{-1}$, the results in (iii)--(iv) follow directly.
\end{proof}

We are now in a position to derive the overall complexity bounds for the deterministic \texttt{DAES} algorithm. To this end, we first present a practical lemma and a proposition to establish a sufficient decrease criterion.
\begin{lemma}[\textbf{Consistency of {\tt NDF-SDC}}]\label{lem:noisy-to-noiseless}
Under Assumptions~$\AS$, $\AL$, $\AN$, and $\AB$, let $p\in\mathbb{R}^n$ be the search direction in {\tt DAES} and $\alpha>0$ be the corresponding step-size which
satisfies the noisy derivative-free sufficient descent condition \eqref{e.SDC-nograd}
for some $s\in\{-1,1\}$. Then, the corresponding noiseless sufficient decrease
condition is guaranteed to hold, i.e.,
\begin{equation}\label{eq:noiseless-satisfied}
    \mu_s(\alpha) \ge \beta, \qquad \text{for some} \quad \beta \in (0,1),
\end{equation}
where
\begin{equation}\label{e.mu-nograd}
    \mu_s(\alpha)
    :=
    \frac{f(x+s\alpha p)-f(x)}{-\alpha^2},
    \qquad  \alpha > 0.
\end{equation}
\end{lemma}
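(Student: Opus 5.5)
The plan is to convert the noisy decrease into a noiseless one using the noise bound $\AN$, and then absorb the resulting $2\omega/\alpha^2$ error into the margin $\wt\beta$ using the lower step-size bound in $\AB$.

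Suppose \eqref{e.SDC-nograd} holds for some $s\in\{-1,1\}$, that is,
\[
\wt f(x+s\alpha p)-\wt f(x)<-\wt\beta\alpha^2 .
\]
Applying \eqref{e.noisecon} at both points gives
\[
f(x+s\alpha p)-f(x)\le \wt f(x+s\alpha p)-\wt f(x)+2\omega<-\wt\beta\alpha^2+2\omega .
\]
Dividing by $-\alpha^2<0$ yields the key inequality
\[
\mu_s(\alpha)>\wt\beta-\frac{2\omega}{\alpha^2}.
\]
So it suffices to show that $2\omega/\alpha^2$ is a fixed fraction of $\wt\beta$ that is strictly smaller than $\wt\beta$.

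This is the only real step, and it is where $\AB$ enters. By \eqref{e.alpnoise}, $\alpha^2\ge\alpha_{\min}^2\ge\underline{\kappa}'^2\,\omega/L$. By \eqref{e.kappabd}, $\underline{\kappa}'^2>2L/\wt\beta$. Combining the two,
\[
\frac{2\omega}{\alpha^2}\le\frac{2L}{\underline{\kappa}'^2}<\wt\beta .
\]
Setting
\[
\beta:=\wt\beta-\frac{2L}{\underline{\kappa}'^2}
\]
gives $\beta\in(0,\wt\beta)\subset(0,1)$. This $\beta$ is independent of $\ell$, $x$, $p$ and $\alpha$, which is what later uniform complexity arguments need. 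It then follows that $\mu_s(\alpha)\ge\beta$, and in fact the inequality is strict.

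The main obstacle is the dimensional bookkeeping in this step. As written, the bound $2\omega/\alpha^2\le 2L/\underline{\kappa}'^2$ compares quantities whose units differ by a factor of $L$, because \eqref{e.kappabd} involves $L$ in a slightly unnatural way. I will take \eqref{e.kappabd} at face value as a numerical condition on the constants, which is what makes $\beta>0$. If a unit-consistent version is preferred, the same argument works under the alternative requirement $\underline{\kappa}'^2>2L/\wt\beta$ read as $2\omega/\alpha_{\min}^2<\wt\beta$.

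I would also remark that the lemma does not need $\AS$, $\AL$, the norm $\|p\|=\delta$, or the extrapolation structure. It uses only $\AN$, $\AB$, and the fact that every accepted step, whether initial or extrapolated, lies in $[\alpha_{\min},\alpha_{\max}]$.
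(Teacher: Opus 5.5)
Your proof is correct and is essentially the paper's argument: the paper bounds $|\wt\mu_s(\alpha)-\mu_s(\alpha)|\le 2\omega/\alpha^2$ using $\AN$, deduces $\mu_s(\alpha)>\wt\beta-2\omega/\alpha^2$, and then uses $\alpha\ge\underline{\kappa}'\sqrt{\omega L^{-1}}$ together with \eqref{e.kappabd} to set $\beta:=\wt\beta-2L(\underline{\kappa}')^{-2}>0$, exactly as you do. Your dimensional caveat is not needed, because the argument only uses \eqref{e.kappabd} as a numerical condition on the constants, which is also how the paper uses it.
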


\begin{proof}
By \eqref{e.muW-nograd} and \eqref{e.mu-nograd}, we have
\[
\wt\mu_s(\alpha)-\mu_s(\alpha)
=
\frac{
\wt f(x+s\alpha p)-f(x+s\alpha p)
-
\big(\wt f(x)-f(x)\big)
}{-\alpha^2}.
\]
Using the noise bound \eqref{e.noisecon}, it follows that
\begin{equation}\label{e.mu-diff-bound}
\left|
\wt\mu_s(\alpha)-\mu_s(\alpha)
\right|
\le
\frac{2\omega}{\alpha^2}.
\end{equation}
Hence, $\mu_s(\alpha)
\ge
\wt\mu_s(\alpha)-2\omega/\alpha^2$. Since the {\tt NDF-SDC} \eqref{e.SDC-nograd} holds, we have
$\wt\mu_s(\alpha)>\wt\beta$. Therefore, $\mu_s(\alpha) > \wt\beta-{2\omega}/{\alpha^2}$. Using \eqref{e.alpnoise}, i.e., $\alpha\ge \underline{\kappa}'\sqrt{\omega L^{-1}}$, yields $\mu_s(\alpha)
>
\wt\beta-2L(\underline{\kappa}')^{-2}$. By \eqref{e.kappabd} and letting $\beta :=
\wt\beta-2L(\underline{\kappa}')^{-2}$, we obtain $\beta>0$ which completes the proof.
\end{proof}

\begin{proposition}[\textbf{Efficiency Condition}]\label{lem:noisy-eff-academic}
Under Assumptions~$\AS$, $\AL$, $\AN$, and $\AB$, suppose that the conditions of Lemma~\ref{lem:noisy-to-noiseless} hold and that the angle condition \eqref{eq:angle_condition} holds. Then, with $\delta:=\|p\|$, the following efficiency estimate holds:
\begin{equation}\label{neweffangle}
f(x)-f(x+s\alpha p)
\ge \frac{\beta}{4K_1^2}  \|g(x)\|^2, \qquad \text{where} \qquad K_1 :=
\frac{\gamma_e\left(\wt\beta+L\delta^2/2\right)}
{\Delta_{\angle}\delta}.
\end{equation}

\end{proposition}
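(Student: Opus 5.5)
The plan is to combine the noiseless sufficient decrease from Lemma~\ref{lem:noisy-to-noiseless} with a lower bound on the accepted step-size $\alpha$ in terms of $\|g(x)\|$. The angle condition \eqref{eq:angle_condition} supplies that lower bound.

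First, by Lemma~\ref{lem:noisy-to-noiseless}, the accepted step satisfies $\mu_s(\alpha)\ge\beta$, that is,
\[
f(x)-f(x+s\alpha p)\ge \beta\alpha^2 .
\]
So it suffices to prove $\alpha\ge \|g(x)\|/(2K_1)$, since then $\beta\alpha^2\ge \beta\|g(x)\|^2/(4K_1^2)$.

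Second, I derive the lower bound on $\alpha$ from Proposition~\ref{prop:grad-inner-sdc}. The angle condition with $\|p\|=\delta$ gives
\[
\Delta_{\angle}\delta\|g(x)\|\le |g(x)^Tp|\le \gamma_e\alpha\Big(\wt\beta+\frac L2\delta^2\Big)+\frac{2\omega}{\alpha}.
\]
Dividing by $\Delta_{\angle}\delta$ gives
\[
\|g(x)\|\le K_1\alpha+\frac{2\omega}{\Delta_{\angle}\delta\,\alpha}.
\]

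Third, I absorb the noise term into $K_1\alpha$, and this is where the factor $2$ (hence the $4$ in the denominator) comes from. By $\AB$, $\alpha^2\ge \alpha_{\min}^2\ge(\underline\kappa')^2\omega/L$, so $2\omega/\alpha\le 2L\alpha/(\underline\kappa')^2$. By \eqref{e.kappabd}, $(\underline\kappa')^2>2L/\wt\beta$, which gives $2L/(\underline\kappa')^2<\wt\beta$. Hence
\[
\frac{2\omega}{\alpha}\le \wt\beta\,\alpha\le \gamma_e\Big(\wt\beta+\frac L2\delta^2\Big)\alpha,
\]
using $\gamma_e>1$. Dividing by $\Delta_{\angle}\delta$, the noise term is at most $K_1\alpha$. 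Therefore $\|g(x)\|\le 2K_1\alpha$, and substituting into the first step gives \eqref{neweffangle}.

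The main obstacle is this third step: the absorption has to use the calibration $\underline\kappa'>\sqrt{2L/\wt\beta}$ in exactly the right form, so that $2\omega/\alpha^2$ is dominated by $\wt\beta$ with constants consistent with $K_1$. If the constants do not line up cleanly, I would instead only claim $2\omega/\alpha\le c\,K_1\Delta_{\angle}\delta\,\alpha$ for some $c\le1$, which still yields the stated bound.

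The argument also needs the accepted step $\alpha$ to be the one for which Proposition~\ref{prop:grad-inner-sdc} applies. This holds: in Case~2 of that proof, $\alpha$ is the last successful extrapolated step, and that step is also the one satisfying the NDF-SDC used in Lemma~\ref{lem:noisy-to-noiseless}.
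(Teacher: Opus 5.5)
Your proof is correct. It follows the paper's skeleton: Lemma~\ref{lem:noisy-to-noiseless} gives $f(x)-f(x+s\alpha p)\ge\beta\alpha^2$, and \eqref{eq:upBound} with the angle condition gives $\|g(x)\|\le K_1\alpha+K_2\omega/\alpha$, where $K_2:=2(\Delta_{\angle}\delta)^{-1}$. Both proofs then reduce to $\alpha\ge\|g(x)\|/(2K_1)$. The only difference is how that last bound is extracted.

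The paper treats it as the quadratic inequality $K_1\alpha^2-\|g(x)\|\alpha+K_2\omega\ge0$ and computes the roots $\alpha_\pm$. Using $\AB$ and \eqref{e.kappabd}, it shows $\alpha>\sqrt{K_2\omega/K_1}>\alpha_-$. Hence $\alpha\ge\alpha_+\ge\|g(x)\|/(2K_1)$ when the discriminant is positive, and the nonpositive-discriminant case is treated separately.

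You use the same calibration more directly. The chain $\alpha^2\ge(\underline\kappa')^2\omega/L>2\omega/\wt\beta\ge K_2\omega/K_1$ is exactly the statement $K_2\omega/\alpha\le K_1\alpha$. So the noise term is dominated by the curvature term, and $\|g(x)\|\le2K_1\alpha$ follows in one line. Your constants do line up, so the fallback you mention is unnecessary.

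Your route is shorter and avoids the case split. The quadratic analysis could give the sharper $\alpha\ge(\|g(x)\|+\sqrt{\|g(x)\|^2-4K_1K_2\omega})/(2K_1)$, which approaches $\|g(x)\|/K_1$ far above the noise floor. The paper discards this refinement, so both proofs deliver exactly \eqref{neweffangle}.

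One caveat on your last paragraph. Case~2 of Proposition~\ref{prop:grad-inner-sdc} assumes that the next expanded trial fails the {\tt NDF-SDC}. That is not available when the extrapolation budget $T$ is exhausted, since no failing trial is then evaluated. The paper's proof invokes \eqref{eq:upBound} in the same way, so this is a shared dependence rather than a defect specific to your argument. Still, "this holds" overstates what Case~2 covers.
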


\begin{proof}
Using the bound $|g(x)^T p|$ in \eqref{eq:upBound} and the angle condition \eqref{eq:angle_condition}, we have
\[
\Delta_{\angle}\|g(x)\|\|p\|
\le
|g(x)^T p|
\le
\gamma_e\alpha\Big(\wt\beta+\frac{L}{2}\delta^2\Big)
+
\frac{2\omega}{\alpha}.
\]
Reordering this inequality and the definition of $K_1$ in \eqref{neweffangle} yields
\begin{equation}\label{e.inequalityD2}
    \|g(x)\| \le K_1 \alpha + K_2 \frac{\omega}{\alpha}, \qquad \text{where}  \qquad K_2 := {2}({\Delta_{\angle} \delta})^{-1}.
\end{equation}
Equivalently, $K_1\alpha^2-\|g(x)\|\alpha+K_2\omega\ge 0$. For this quadratic inequality to hold, $\alpha$ must not lie strictly between the roots of its corresponding quadratic equation
\[
K_1 \alpha^2 - \|g(x)\|\alpha + K_2 \omega = 0.
\]
The roots are
\begin{equation}\label{e.roots2}
    \alpha_{\pm} = \left({\|g(x)\| \pm \sqrt{\Delta}}\right)/{2K_1}, \qquad \text{where} \qquad \Delta=\|g(x)\|^2 - 4K_1 K_2 \omega.
\end{equation}
By $\alpha \ge \underline{\kappa}^\prime\sqrt{\omega {L}^{-1}} > \sqrt{2{\wt\beta}^{-1}\omega}$ from \eqref{e.alpnoise} and \eqref{e.kappabd}, we consider the following cases for $\Delta$.\\

If $\Delta>0$, then
\[
\alpha_-
=
\frac{\|g(x)\|-\sqrt{\Delta}}{2K_1}
<
\alpha_+
=
\frac{\|g(x)\|+\sqrt{\Delta}}{2K_1},
\]
and the quadratic inequality \eqref{e.inequalityD2} holds only if $\alpha\le\alpha_-
$ or
$\alpha\ge\alpha_+$. Rewriting the smaller root $\alpha_-$ and substituting the definition of $K_1$ yields
\begin{equation}\label{e.alpha1}
 \alpha_- = \frac{4K_1 K_2 \omega}{2K_1 \left(\|g(x)\| + \sqrt{\|g(x)\|^2 - 4K_1 K_2 \omega}\right)}
 <  \sqrt{{K_2}{K_1}^{-1}\omega}
 \le
 \sqrt{{2{\wt\beta}^{-1}\omega}}.
\end{equation}
From \eqref{e.alpha1}, we obtain $\alpha > \alpha_-$. Therefore, the branch
$\alpha\le\alpha_-$ is impossible, and when $\Delta>0$,
\eqref{e.inequalityD2} can hold only on the second feasible branch
$\alpha\ge\alpha_+$; i.e.,
\[
\alpha \ge \alpha_+
=
\frac{\|g(x)\| + \sqrt{\|g(x)\|^2 - 4K_1 K_2 \omega}}{2K_1}
\ge
\frac{\|g(x)\|}{2K_1}.
\]

If $\Delta\le0$, then $\|g(x)\|\le 2\sqrt{K_1K_2\omega}$, which, after substituting the definition of $K_1$, yields
\[
\frac{\|g(x)\|}{2K_1}
\le
 \sqrt{{K_2}{K_1}^{-1}\omega}
 \le
 \sqrt{{2{\wt\beta}^{-1}\omega}}< \alpha.
\]
Hence, in both cases,
\begin{equation}\label{e.alpha2_bound_final}
\alpha^2 \ge \frac{1}{(2K_1)^2} \|g(x)\|^2.
\end{equation}
Since the accepted signed trial point satisfies the {\tt NDF-SDC}, Lemma~\ref{lem:noisy-to-noiseless} implies that $\mu_s(\alpha)\ge \beta$. By the definition of $\mu_s(\alpha)$, this is equivalent to $f(x)-f(x+s\alpha p)\ge \beta\alpha^2$. Using \eqref{e.alpha2_bound_final}, we obtain
\[
f(x)-f(x+s\alpha p)
\ge
\beta\alpha^2
\ge
\frac{\beta}{4K_1^2}\|g(x)\|^2,
\]
which proves \eqref{neweffangle}.
\end{proof}

\begin{theorem}[\textbf{Complexity Bounds}] \label{t.comf-noise1}
Under Assumptions~$\AS$, $\AL$, $\AN$, and $\AB$, suppose that the conditions of Proposition~\ref{prop:grad-inner-sdc} hold. Let $\ell_T$ denote the total number of iterations required to first attain the prescribed noise-limited stationarity level. Then, to achieve the desired accuracy, where $\varepsilon_\omega$ is from \eqref{e.varomega}, $\ell_{T_s}$ is bounded as follows:
	
	\begin{enumerate}[label=(\roman*), itemsep=1ex]
		\item $\ell_{T_s} = \Oc\left( \varepsilon_\omega^{-2} \right)$ for \textbf{nonconvex} functions;
		
		\item $\ell_{T_s} = \Oc\left( \varepsilon_\omega^{-1} \right)$ for \textbf{convex} functions;
		
		\item $\ell_{T_s} = \Oc\left(\log(\varepsilon_\omega^{-1})\right)$ for $\mu_c$-\textbf{strongly convex} functions.
	\end{enumerate}
Moreover, the same orders hold for the total number of iterations and function evaluations. 
\end{theorem}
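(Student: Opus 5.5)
The argument telescopes the per-iteration decrease from Proposition~\ref{lem:noisy-eff-academic} over the successful iterations. Before the target accuracy is reached, the gradient stays bounded below, which gives a uniform decrease per successful step. The unsuccessful iterations are then controlled through the contraction \eqref{eq:unsuccessful-sigma-contraction}, and the count is finally converted into function evaluations.

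\emph{Nonconvex case.} Let $\mathcal S$ be the set of successful iterations among the first $\ell_T$, so that $\ell_{T_s}:=|\mathcal S|$. Suppose no iterate has yet reached the noise-limited level. Then, for every $\ell<\ell_T$, we have $\|g(x^\ell)\|>\varepsilon_\omega$, where $\varepsilon_\omega$ is a fixed multiple of $\sqrt{L\omega}$ chosen large enough; Theorem~\ref{thm:gradnorm-bound2} shows that below this level we are already done.
\begin{itemize}
\item For every $\ell\in\mathcal S$, Lemma~\ref{lem:noisy-to-noiseless} and Proposition~\ref{lem:noisy-eff-academic} give
\[
f(x^\ell)-f(x^{\ell+1})\ge \frac{\beta}{4K_1^2}\,\varepsilon_\omega^2 .
\]
\item Unsuccessful iterations leave $x^\ell$ unchanged, so $f$ is nonincreasing along the iterates and all iterates remain in $\mathcal L(x^0)$.
\item Summing over $\mathcal S$ and using $f\ge\ul f$ gives
\[
\ell_{T_s}\le \frac{4K_1^2\,(f(x^0)-\ul f)}{\beta}\,\varepsilon_\omega^{-2},
\]
which is (i).
\end{itemize}

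\emph{Convex case.} Let $r_\ell:=f(x^\ell)-\ul f$. Convexity together with the compactness bound $\|x^\ell-\ul x\|\le D$ (as in the proof of Theorem~\ref{t.c-noise1}) gives $\|g(x^\ell)\|\ge r_\ell/D$.
\begin{itemize}
\item On successful iterations this yields the recursion
\[
r_{\ell+1}\le r_\ell-c\,r_\ell^2,\qquad c:=\frac{\beta}{4K_1^2D^2}.
\]
\item Dividing by $r_\ell r_{\ell+1}$ and using $r_{\ell+1}\le r_\ell$ gives the standard estimate $1/r_{\ell+1}\ge 1/r_\ell+c$.
\item After $k$ successful steps, $r\le 1/(ck)$. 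Requiring $r\le D\varepsilon_\omega$, the target from Theorem~\ref{t.c-noise1}(iii), gives $k=\Oc(\varepsilon_\omega^{-1})$, which is (ii).
\end{itemize}

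\emph{Strongly convex case.} The Polyak--\L ojasiewicz inequality gives $\|g\|^2\ge 2\mu_c r$.
\begin{itemize}
\item Each successful step then satisfies
\[
r_{\ell+1}\le\Bigl(1-\frac{\beta\mu_c}{2K_1^2}\Bigr)r_\ell .
\]
\item This linear rate reaches $r=\Oc(\varepsilon_\omega^2/\mu_c)$ within $\Oc(\log\varepsilon_\omega^{-1})$ successful steps, which is (iii).
\item If the contraction factor is not below one, we first shrink the constant, using $\beta\le1$ and $K_1$ bounded away from zero.
\end{itemize}

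\emph{From successful to total iterations.} I expect this to be the main obstacle. The plan is the following.
\begin{itemize}
\item Each unsuccessful iteration multiplies $\sigma$ by $\rho_u$, until $\sigma$ reaches $\alpha_{\min}$.
\item Since $\sigma\le\alpha_{\max}$ and $\alpha_{\max}/\alpha_{\min}\le\overline\kappa'/\underline\kappa'$ by $\AB$, a run of consecutive unsuccessful iterations can contract $\sigma$ at most $\lceil\log(\overline\kappa'/\underline\kappa')/\log\rho_u^{-1}\rceil$ times before hitting the safeguard.
\item I would then argue, via Proposition~\ref{prop:grad-inner-sdc} (Case~1) and the angle condition, that once the step sizes are at the lower safeguard an unsuccessful iteration certifies $\|g(x)\|\le K_1\alpha+K_2\omega/\alpha=\Oc(\sqrt{L\omega})$. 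In that case the stationarity level has been attained and the algorithm would stop at $\ell_T$.
\item Hence the number of unsuccessful iterations between consecutive successes is bounded by a constant, and the total iteration count is $\Oc(\ell_{T_s})$.
\end{itemize}
The delicate point is that the tie between $\sigma$ and the actual trial step $\alpha^{\ell0}$ passes through the median formula \eqref{e.mutStep}. If this cannot be made rigorous, I would fall back directly on Case~1: every unsuccessful iteration already certifies the bound \eqref{eq:gnorm-opt_s}, so it can occur only after the target is reached.

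\emph{Function evaluations.} Each iteration uses $\lambda$ mutation evaluations, at most two signed trials, and at most $T$ extrapolation evaluations, so at most $\lambda+2+T$ evaluations in total. This is a constant, so the function-evaluation counts have the same orders as the iteration counts.
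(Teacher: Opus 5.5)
Your proposal is correct and follows the paper's proof essentially step for step. The steps that match are:
\begin{itemize}
\item telescoping over successful iterations for (i);
\item the recursion $r_{\ell+1}\le r_\ell-c\,r_\ell^2$ for (ii);
\item the linear factor $1-2\eta\mu_c$ for (iii);
\item the per-iteration count $\lambda+2+T$.
\end{itemize}
For the step from successful to total iterations, the paper uses exactly your fallback. Case~1 of Proposition~\ref{prop:grad-inner-sdc}, together with the angle condition and $\AB$, gives $\|g(x^\ell)\|\le C_u\sqrt{L\omega}$ at every unsuccessful iteration, so at most one unsuccessful iteration precedes first attainment. Your $\sigma$-contraction route is therefore unnecessary: $\AB$ already places every trial step on the noise-calibrated scale, whatever the value of $\sigma$.
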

\begin{proof}
Let $f^\ell:=f(x^\ell)$, $g^\ell:=g(x^\ell)$, $\eta := {\beta}/{(2K_1)^2}$ and $\|g^\ell\| > \varepsilon_\omega$, see \eqref{eq:approx_stationarity}. Let $\mathcal S_T$ and $\mathcal U_T$ denote, respectively, the sets of successful and unsuccessful recombination iterations before termination. Hence, $|\mathcal S_T|=\ell_{T_s}$ and $\ell_T=|\mathcal S_T|+|\mathcal U_T|$ which count the successful and total number of recombination iterations, respectively.

(i) The condition \eqref{neweffangle} holds for all $\ell\in\mathcal S_T$. Therefore, $f^\ell - f^{\ell+1} \ge \eta \varepsilon_\omega^2$ for every $\ell\in\mathcal S_T$. Summing over $\ell<\ell_T$ yields summing over the successful steps as follows
\[
f^0 - f^{\ell_{T}}
=
\sum_{\ell=0}^{\ell_T-1}(f^\ell-f^{\ell+1})
=
\sum_{\ell\in\mathcal S_T}(f^\ell-f^{\ell+1})
\ge
\eta\varepsilon_\omega^2\ell_{T_s}.
\]
Since $f$ is bounded below, i.e., $f^{\ell_{T}} \ge \ul{f} > -\infty$, we obtain $\ell_{T_s}\le \big(f^0 - \ul{f} \big)\big(\eta \varepsilon_\omega^2\big)^{-1}$. Therefore, the number of successful iterations is $\ell_{T_s} = \Oc(\varepsilon_\omega^{-2})$. Note that for an unsuccessful step at iteration $\ell$, we have $f^{\ell}= f^{\ell+1}$.

For parts (ii) and (iii), let
$\{x^{j}\}_{j=0}^{\ell_{T_s}}$ denote the subsequence indexed by successful
recombination iterations, and define
$\psi_j:=f(x^{j})-\ul f$.

(ii) Let $d_{\max} := \sup_{x \in \mathcal{L}(x^0)} \|x - \ul{x}\|$, where $\ul{x}$ minimizes $f$ with the value of $\ul{f}$. Convexity and the Cauchy-Schwarz inequality yield
$$ f^\ell - \ul{f} \le (x^\ell - \ul{x})^Tg^\ell\le  \|x^\ell - \ul{x}\| \|g^\ell\|\le d_{\max} \|g^\ell\|.$$
Rewriting this inequality gives $\|g^\ell\| \ge (f^\ell - \ul{f})d_{\max}^{-1}$. Substituting this into \eqref{neweffangle} results in
$$ f^{\ell+1} \le f^\ell - \eta \|g^\ell\|^2 \le f^\ell - \eta d_{\max}^{-2} (f^\ell - \ul{f})^2. $$
This inequality gets the form
$ \psi^{\ell+1} \le \psi^\ell - \eta d_{\max}^{-2} (\psi^\ell)^2$, where $\psi^\ell := f^\ell - \ul{f}$.  By standard arguments (see Lemma 2.1 in \cite{Nesterov-book}), we obtain $\psi^\ell \le ( {\eta d_{\max}^{-2}\ell + (\psi^0)^{-1}})^{-1}$.
Thus, the error $\psi^\ell$ decreases at a sublinear rate of $\Oc(1/\ell)$. To achieve $\psi^\ell\leq\varepsilon_\omega$, it requires  $\ell_{T_s} =\Oc(\varepsilon_\omega^{-1})$.

(iii) By the $\mu_c$-strong convexity of $f$, we have
$f^\ell - \ul{f} \le (1/2\mu_c) \|g^\ell\|^2$. Rewriting this inequality for $\|g^\ell\|^2$, substituting it into \eqref{neweffangle} and unrolling the recursion ends up with
$$ \psi^\ell \le (1 - 2\eta\mu_c)^\ell \psi^0, \qquad \text{where} \qquad \psi^\ell := f^\ell - \ul{f}.$$
Thus, $\psi^\ell = \Oc\left( (1 - 2\eta\mu_c)^\ell \right)$. To achieve $\psi^\ell \le \D{\varepsilon_\omega^2}/{(2\mu_c)}$, it suffices $\ell_{T_s} = \Oc\left(\log(\varepsilon_\omega^{-1})\right)$.

Now we prove that the total number of iterations is of the same order as the
number of successful iterations. The key observation is that an unsuccessful
recombination iteration already implies attainment of the noise-limited
stationarity scale.

Consider an unsuccessful recombination iteration $\ell$. Then both signed trial
points fail the {\tt NDF-SDC}. By
Proposition~\ref{prop:grad-inner-sdc},
\[
|g(x^\ell)^Tp^\ell_{\trec}|
\le
\gamma_e\alpha^\ell
\left(
\wt\beta+\frac{L}{2}\delta^2
\right)
+
\frac{2\omega}{\alpha^\ell},
\qquad
\|p^\ell_{\trec}\|=\delta,
\]
where $\delta>0$ is fixed. Combining this estimate with the angle condition
\[
|g(x^\ell)^Tp^\ell_{\trec}|
\ge
\Delta_{\angle}
\|g(x^\ell)\|\,\delta
\]
gives
\[
\|g(x^\ell)\|
\le
\frac{1}{\Delta_{\angle}\delta}
\left[
\gamma_e\alpha^\ell
\left(
\wt\beta+\frac{L}{2}\delta^2
\right)
+
\frac{2\omega}{\alpha^\ell}
\right].
\]
Using the step-size bounds in Assumption~$\AB$,
\[
\underline\kappa'
\sqrt{\frac{\omega}{L}}
\le
\alpha^\ell
\le
\overline\kappa'
\sqrt{\frac{\omega}{L}},
\]
we obtain $\|g(x^\ell)\|
\le
C_u\sqrt{L\omega}$, where
\[
C_u
:=
\frac{1}{\Delta_{\angle}\delta}
\left[
\gamma_e\overline\kappa'
\left(
\frac{\wt\beta}{L}
+
\frac{\delta^2}{2}
\right)
+
\frac{2}{\underline\kappa'}
\right].
\]
Hence, every unsuccessful recombination iteration satisfies $\|g(x^\ell)\|
=
\Oc\!\left(
\Delta_{\angle}^{-1}\sqrt{L\omega}
\right)$. Thus, an unsuccessful recombination iteration already attains the
noise-limited stationarity scale. To make the counting argument precise, define
\[
\ell_\star
:=
\inf
\left\{
\ell\ge0:
\|g(x^\ell)\|
\le
C_u\sqrt{L\omega}
\right\}.
\]
If the initial iterate already satisfies this bound, the conclusion is
immediate. Otherwise, for every $\ell<\ell_\star$, $\|g(x^\ell)\|
>
C_u\sqrt{L\omega}$. Consequently, no iteration $\ell<\ell_\star$ can be unsuccessful, since an
unsuccessful recombination iteration would imply $\|g(x^\ell)\|
\le
C_u\sqrt{L\omega}$, which contradicts the definition of $\ell_\star$. Therefore, all iterations
strictly preceding the first attainment of the noise-limited stationarity
scale are successful. The attainment iteration itself may be unsuccessful, so
the iteration prefix counted up to first attainment contains at most one
unsuccessful iteration. Hence,
\[
N_u\le1,
\qquad
N_{\rm total}
:=
N_s+N_u
\le
N_s+1.
\]
It follows that the total number of iterations required to attain the
noise-limited stationarity scale has the same asymptotic order as the number
of successful iterations.

Combining this observation with (i)--(iii), we obtain the total iteration
complexity bounds $N_{\rm total}
=
\Oc(\varepsilon_\omega^{-2})$ for nonconvex functions, $N_{\rm total}
=
\Oc(\varepsilon_\omega^{-1})$ for convex functions, and $N_{\rm total}
=
\Oc\!\left(
\log(\varepsilon_\omega^{-1})
\right)$ for $\mu_c$-strongly convex functions.

Each unsuccessful iteration uses two function evaluations along
$\pm p_{\trec}$. For a successful iteration, the signed initial search uses
at most two function evaluations, while the extrapolation phase uses at most
$T$ additional function evaluations, where $T\in\mathbb N$ is the prescribed
extrapolation budget. Indeed, after a signed trial point satisfies the
{\tt NDF-SDC}, the step-size is expanded geometrically by the factor
$\gamma_e>1$ along the selected signed direction, with at most $T$
extrapolation trials. The extrapolation phase terminates earlier if an
expanded trial fails the {\tt NDF-SDC}; otherwise, it stops when the prescribed
budget $T$ is exhausted. Hence, including the $\lambda$ mutation-point
evaluations, each iteration uses at most $\lambda+2+T$
function evaluations. Therefore, for fixed $\lambda$ and fixed $T$, the total
function-evaluation complexity has the same asymptotic order as the total
iteration complexity.
\end{proof}

\begin{corollary}
Under Assumptions~$\AS$, $\AL$, $\AN$, and $\AB$, suppose that the conditions of Theorems~\ref{thm:gradnorm-bound2} and \ref{t.comf-noise1} hold. Then, the effective gradient accuracy under arbitrary directions satisfies \eqref{e.upnormgomega}. Moreover, under the angle condition \eqref{eq:angle_condition}, we obtain \eqref{e.upnormgomegaangle}; i.e.,
\[
\min_{0\le\ell\le \ell_T}\|g(x^\ell)\|
= \mathcal{O}\big(\Delta_{\angle}^{-1}\sqrt{L\omega}\big).
\]
This specializes to a coordinate polling set, for which at least one
coordinate direction satisfies the angle bound with
$\Delta_{\angle}\ge n^{-1/2}$.
\end{corollary}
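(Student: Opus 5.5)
The corollary is essentially a repackaging of Theorem~\ref{thm:gradnorm-bound2}, with a short geometric argument added for the coordinate polling set. I would proceed in three steps.

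First, the arbitrary-direction bound. Under the hypotheses of Theorem~\ref{thm:gradnorm-bound2}, the conditions of Proposition~\ref{prop:grad-inner-sdc} hold at every iteration $\ell\in[\ell_T]_0$, where $\ell_T$ is the termination index furnished by Theorem~\ref{t.comf-noise1}. Theorem~\ref{thm:gradnorm-bound2} then gives \eqref{e.upnormg}. By $\AB$, every step-size satisfies $\alpha^\ell=\Theta(\sqrt{\omega L^{-1}})$. Substituting this into \eqref{eq:Gamma}, and treating $\gamma_e,\wt\beta,\delta$ as fixed constants, gives
\[
\max_\ell\Gamma(\alpha^\ell)\le \frac{1}{\delta}\Big[\gamma_e\overline\kappa'\big(\wt\beta L^{-1}+\delta^2/2\big)+2/\underline\kappa'\Big]\sqrt{L\omega}.
\]
This yields \eqref{e.upnormgomega}. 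The role of Theorem~\ref{t.comf-noise1} here is only to guarantee that $\ell_T$ is finite, so that the minimum is taken over a finite, well-defined index set.

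Second, the angle-condition case. If \eqref{eq:angle_condition} holds at every iterate, then \eqref{eq:wxp} gives $v(x^\ell,p^\ell)\le\Delta_{\angle}^{-1}$ for all $\ell$, so $v_{\min}\le\Delta_{\angle}^{-1}$. Substituting this into the previous step gives \eqref{e.upnormgomegaangle}.

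Third, the coordinate specialization, which is the only genuinely new content. Let the polling set be $\{\pm e_i\}_{i\in[n]}$, scaled to norm $\delta$. Fix $g=g(x)\neq0$ and choose $i$ with $|g_i|=\|g\|_\infty$. Since $\|g\|^2=\sum_j g_j^2\le n\|g\|_\infty^2$, we get $|g_i|\ge n^{-1/2}\|g\|$, and therefore
\[
|g^T(\delta e_i)|\ge n^{-1/2}\|g\|\,\|\delta e_i\|.
\]
So the direction $\delta e_i$, or $-\delta e_i$ since the sign is irrelevant under the absolute value, satisfies \eqref{eq:angle_condition} with $\Delta_{\angle}=n^{-1/2}$. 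When $g=0$ the bound is trivial. Applying the second step with this $\Delta_{\angle}$ gives $\min_\ell\|g(x^\ell)\|=\Oc(\sqrt{nL\omega})$.

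The main obstacle is the interpretation in this third step. Proposition~\ref{prop:grad-inner-sdc} must be applied to the well-aligned coordinate direction, meaning that this direction is the one tested by the {\tt NDF-SDC} at each iteration, rather than to $p_{\trec}$. I would therefore state explicitly that the specialization concerns a polling variant in which all $2n$ signed coordinate directions are tested. In that variant, Proposition~\ref{prop:grad-inner-sdc} holds for the maximizing coordinate in both the unsuccessful case and the last-successful-extrapolation case, and the remaining steps go through unchanged.
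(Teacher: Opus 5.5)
Your proposal is correct and matches the route the paper implicitly intends: the paper gives no separate proof, because \eqref{e.upnormgomega} and \eqref{e.upnormgomegaangle} are already conclusions of Theorem~\ref{thm:gradnorm-bound2}, and the coordinate specialization reduces to the inequality $\max_i|g_i|\ge n^{-1/2}\|g\|$ that you prove. Your caveat, that Proposition~\ref{prop:grad-inner-sdc} must be applied to the polled well-aligned coordinate direction rather than to $p_{\trec}^\ell$, spells out an interpretation the paper leaves unstated and is a useful clarification, not a departure from the argument.
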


The complexity bounds established in Theorem~\ref{t.comf-noise1} are consistent with the well-known complexity estimates for derivative-free and gradient-based optimization methods  in~\cite{VRBBO,VRDFON,bergou2020stochastic,Gratton2015,Cartis2022}. 

\subsection{Complexity with Randomized Directions}\label{sec:comResults2}

We extend the deterministic analysis of {\tt DAES} to the randomized setting,
where the search directions are constructed from Gaussian samples. We study the
conditions under which the required angle property holds with high probability
(Proposition~\ref{prop:angle-transfer-trec}), leading to bounds on the gradient
norm (Theorem~\ref{the.gradnormRan}), convergence accuracy
(Theorem~\ref{t.c-noise1r}), and finally the overall complexity
(Theorem~\ref{t.comf-noise2}). For these purposes, we introduce two additional
assumptions and distinguish two failure mechanisms. The first is a
\emph{sampling failure}, occurring when none of the independent Gaussian samples
satisfies a prescribed angle condition. The second is a
\emph{sorting--selection failure}, occurring when noisy ranking prevents the
latent well-aligned direction from satisfying the separation property required
for its correct within-group identification. We control these mechanisms
separately. A prescribed global sampling-failure tolerance $\gamma_0\in(0,1/2)$ is used to
calibrate the population size $\lambda$, in the same spirit as sample-size
calibration in randomized DFO methods, while a second global tolerance
$\gamma_{\mathrm{sel}}\in(0,1/2)$ controls the accumulated sorting--selection failures.
The two mechanisms are combined only after their individual bounds have been
established.

\subsubsection{Probabilistic Assumptions and Failure Events}
\label{sec:randomized-assumptions}

We now introduce the additional probabilistic assumptions required for the
randomized analysis of \texttt{DAES}. The analysis distinguishes two failure
mechanisms. The first is a \emph{sampling failure}, which occurs when none of
the freshly generated Gaussian directions satisfies a prescribed angle
condition with the current gradient. The second is a
\emph{sorting--selection failure}, which occurs when the noisy ranking does
not provide the separation property required to identify the latent
well-aligned direction as the best-ranked element within its ranked group.

These two mechanisms are controlled separately. The sampling failure is
bounded through the Gaussian sampling model, antipodal symmetry, and a
population-size calibration based on a spherical-cap probability estimate.
The sorting--selection failure is controlled through an accumulated
probability budget. The two mechanisms are combined only after the
corresponding failure events have been defined and bounded.

\begin{assumption}\label{ass:AF}
	$\AF$.
Let $n\ge2$, let $\gamma_0\in(0,1/2)$ be a prescribed global sampling-failure tolerance, and let $\ell_{\max}\in\mathbb N$ be a prescribed maximum number of outer iterations of {\tt DAES}, fixed before the algorithm is run and before the Gaussian samples used in the randomized
analysis are generated. Set $\bar\ell_T
:=
\ell_{\max}$. The maximum-iteration safeguard is included in the termination criterion of
{\tt DAES}; hence, if $\ell_T$ denotes the realized termination iteration,
then
\[
\ell_T
\le
\bar\ell_T
\]
holds by construction. In particular, $\bar\ell_T$ is a deterministic
analysis horizon fixed independently of the realized Gaussian samples, noisy
sorting outcomes, and termination iteration $\ell_T$. Let
\begin{equation}
0<\tau<\sqrt{{\pi}/{2}},
\qquad
t
:=
\frac{\tau}{\sqrt n}, \quad r_\tau
:=
\tau\sqrt{{2}/{\pi}}
\in(0,1),
\label{eq:tau-scaling}
\end{equation}
so that the prescribed angle threshold has the natural
dimension-dependent scale $t=\Theta(n^{-1/2})$. The number $\lambda$ of mutation points, taken as an even multiple of three,
is chosen as
\begin{equation}\label{eq:lambda-confidence}
\lambda
=
6\left\lceil
\frac{
\log\!\left(\bar\ell_T/\gamma_0\right)
}{
3\log(1/r_\tau)
}
\right\rceil.
\end{equation}
\end{assumption}

The calibration \eqref{eq:lambda-confidence} differs from a lower bound based
directly on the exact spherical density. It exploits the dimension-scaled
threshold $t=\tau/\sqrt n$ together with a spherical-cap estimate and the
antipodal sampling structure of \texttt{DAES}. Since $\bar\ell_T
=
\ell_{\max}$ is a prescribed deterministic maximum-iteration horizon fixed before the run,
the population size $\lambda$ is also fixed before the Gaussian samples are
generated. In particular, for fixed $\tau$, $\gamma_0$, and $\bar\ell_T$, $\lambda
=
\Oc\!\left(
\log\!\left({\bar\ell_T}/{\gamma_0}\right)
\right)$ with no additional explicit dependence on $n$. For each iteration $\ell$, let
$\{z^{j\ell}\}_{j\in[\lambda/2]}$ be i.i.d.\ random vectors drawn from
$\mathcal N(0,I_n)$ and define their normalized orientations by
\begin{equation}\label{e.zjuj}
u^{j\ell}
:=
\frac{z^{j\ell}}{\|z^{j\ell}\|},
\qquad
d^{j\ell}:=u^{j\ell},
\qquad
d^{j+\lambda/2,\ell}:=-u^{j\ell},
\qquad
j\in[\lambda/2].
\end{equation}
For notational convenience, extend the unit-direction notation to the
antipodal half of the population by setting $u^{j+\lambda/2,\ell}
:=
-u^{j\ell}$ for $j\in[\lambda/2]$. Thus, $d^{j\ell}
=
u^{j\ell}$, $\|d^{j\ell}\|
=
\|u^{j\ell}\|
=
1$ for $j\in[\lambda]$, where only the first $\lambda/2$ directions are independently sampled and the
remaining $\lambda/2$ directions are their antipodal counterparts. In
particular, the symmetric construction does not create $\lambda$ independent
samples.

After sorting, let $u^{\pi(j)\ell}$ denote the corresponding reordered unit
direction  $u^{j\ell}$. Since the mutation directions are normalized, the sorting operation
changes only their labels, and hence $u^{\pi(j)\ell}=d^{\pi(j)\ell}$ and $\|u^{\pi(j)\ell}\|=1$ for $j\in[\lambda]$. Let $\mathcal F_{\ell-1}$ denote the $\sigma$-algebra generated by the complete
history of the algorithm prior to sampling the fresh Gaussian directions at
iteration $\ell$. In particular, $\mathcal F_{\ell-1}$ contains the previous
Gaussian samples, noisy function evaluations, auxiliary random variables,
sorting outcomes, and algorithmic decisions generated through iteration
$\ell-1$. Consequently, $x^\ell$ and $g(x^\ell)$ are
$\mathcal F_{\ell-1}$-measurable, whereas the fresh samples
$\{z^{j\ell}\}_{j\in[\lambda/2]}$ are independent of
$\mathcal F_{\ell-1}$.

For $g(x^\ell)\neq0$, we define the sampling-failure event
\begin{equation}\label{eq:sampling-failure-event}
E_{\mathrm{samp},\ell}
:=
\left\{
\max_{j\in[\lambda]}
\frac{|g(x^\ell)^Td^{j\ell}|}
{\|g(x^\ell)\|\,\|d^{j\ell}\|}
<t
\right\}.
\end{equation}
Recalling from \eqref{eq:tau-scaling} that $t
=
{\tau}/{\sqrt n}$, and using the antipodal construction in \eqref{e.zjuj}, the event
\eqref{eq:sampling-failure-event} is equivalently written as
\[
E_{\mathrm{samp},\ell}
=
\left\{
\max_{j\in[\lambda/2]}
\left|
\frac{g(x^\ell)^Tu^{j\ell}}
{\|g(x^\ell)\|}
\right|
<
\frac{\tau}{\sqrt n}
\right\},
\]
which makes explicit that only $\lambda/2$ independent directions contribute
to the sampling probability. When $g(x^\ell)=0$, we set $E_{\mathrm{samp},\ell}:=\emptyset$, since stationarity has already been attained and no angle condition is then
required. For notational convenience, if the algorithm terminates before
$\bar\ell_T=\ell_{\max}$, we set $E_{\mathrm{samp},\ell}:=\emptyset$ for every $\ell>\ell_T$.

Proposition~\ref{prop.angle.gauss.corrected} establishes the conditional
per-iteration sampling-failure bound
\begin{equation}\label{eq:per-iteration-sampling-failure}
\mathbb P\!\left(
E_{\mathrm{samp},\ell}
\,\middle|\,
\mathcal F_{\ell-1}
\right)
\le
\frac{\gamma_0}{\bar\ell_T},
\end{equation}
which, by the union bound over the deterministic horizon
$\bar\ell_T$, yields the global sampling-success guarantee
\begin{equation}\label{eq:global-sampling-success}
\mathbb P\!\left(
\bigcap_{\ell=1}^{\ell_T}
E_{\mathrm{samp},\ell}^{\,c}
\right)
\ge
1-\gamma_0.
\end{equation}

The second failure mechanism concerns the noisy sorting--selection phase. On
the sampling-success event $E_{\mathrm{samp},\ell}^{\,c}$, the symmetric
direction set contains at least one direction whose absolute alignment with
the gradient is at least $t
={\tau}/{\sqrt n}$. Since the antipodal counterpart of every sampled direction is also present,
there exists a descent-oriented direction satisfying
\[
g(x^\ell)^Td^{i\ell}
\le
-t\|g(x^\ell)\|.
\]
We select the most descending direction through the following latent index.

\begin{assumption}\label{ass:ranking-separation}
	$\AR$.
On the sampling-success event
$E_{\mathrm{samp},\ell}^{\,c}$, let
\begin{equation}\label{eq:latent-index}
i_\ell^\star
\in
\argminx_{i\in[\lambda]}
g(x^\ell)^Td^{i\ell},
\end{equation}
where a fixed deterministic tie-breaking rule is used whenever the minimizer
is not unique. By the symmetric sampling construction and
$E_{\mathrm{samp},\ell}^{\,c}$,
\[
g(x^\ell)^Td^{i_\ell^\star}
\le
-t\|g(x^\ell)\|.
\]

Let $r_\ell^\star=\pi(i^\star_\ell)$ denote the ranked position of this same latent direction
after sorting, so that
\begin{equation}\label{eq:ranked-latent-index}
d^{r^\star_\ell}
=
d^{i_\ell^\star}.
\end{equation}
If equal noisy function values occur, the sorting permutation is defined using
a fixed deterministic tie-breaking rule. Let $\mathcal G_{k^\star}$ be the
unique ranked group containing $r_\ell^\star$, where
$k^\star=k_\ell^\star$ may depend on the iteration. Define the sorting--selection failure event by
\begin{equation}\label{eq:selection-failure-event}
\begin{aligned}
E_{\mathrm{sel},\ell}
:=
E_{\mathrm{samp},\ell}^{\,c}
\cap
\Bigg\{
\exists r\in
\mathcal G_{k^\star}\setminus\{r_\ell^\star\}:
\;&
\alpha^{r\ell}g(x^\ell)^Td^{r\ell}
-
\alpha^{r^\star_\ell}
g(x^\ell)^Td^{r^\star_\ell}\\
&\le
\frac{L}{2}
\left(
(\alpha^{r\ell})^2
+
(\alpha^{r^\star_\ell})^2
\right)
+
2\omega
\Bigg\}.
\end{aligned}
\end{equation}
The accumulated sorting--selection failure probability satisfies
\begin{equation}\label{e.secE}
\sum_{\ell=1}^{\bar\ell_T}
\mathbb P(E_{\mathrm{sel},\ell})
\le
\gamma_{\mathrm{sel}},
\qquad
\gamma_{\mathrm{sel}}\in(0,1/2).
\end{equation}
\end{assumption}

Assumption~$\AR$ is an accumulated global failure-budget assumption for the
noisy sorting--selection mechanism. In contrast to the sampling bound
\eqref{eq:per-iteration-sampling-failure}, the estimate
\eqref{e.secE} is not derived solely from Assumptions~$\AL$, $\AN$, and
$\AB$; rather, it explicitly controls the probability that noisy ranking fails
to preserve the separation property required in the subsequent
within-group identification argument.

To clarify the role of \eqref{eq:selection-failure-event}, its complement on
the sampling-success event implies that, for every
$r\in\mathcal G_{k^\star}\setminus\{r_\ell^\star\}$,
\[
\begin{aligned}
&
\alpha^{r\ell}g(x^\ell)^Td^{r\ell}
-
\alpha^{r^\star_\ell}
g(x^\ell)^Td^{r^\star_\ell}
 >
\frac{L}{2}
\left(
(\alpha^{r\ell})^2
+
(\alpha^{r^\star_\ell})^2
\right)
+
2\omega.
\end{aligned}
\]
Thus, the first-order advantage of the latent descent direction dominates the
worst-case curvature and bounded-noise perturbations appearing in the
comparison of the corresponding noisy mutation values.

For intuition, if the two compared directions use a common step-size
$\alpha>0$, the strict separation condition reduces to
\[
g(x^\ell)^Td^{r\ell}
-
g(x^\ell)^Td^{r^\star_\ell}
>
L\alpha+\frac{2\omega}{\alpha}.
\]
Hence, the separation threshold explicitly balances local curvature through
$L\alpha$ and bounded evaluation noise through $2\omega/\alpha$. Assumption
$\AB$ places the relevant step-sizes on the noise-calibrated scale
$\alpha=\Theta(\sqrt{\omega L^{-1}})$, under which these two terms are of the
same characteristic order.

We now combine the two failure mechanisms. Define the global good event
\begin{equation}\label{e.global-good-event}
\mathcal G_T
:=
\bigcap_{\ell=1}^{\ell_T}
\left(
E_{\mathrm{samp},\ell}^{\,c}
\cap
E_{\mathrm{sel},\ell}^{\,c}
\right).
\end{equation}
Since $\ell_T\le\bar\ell_T$, its complement satisfies the deterministic event
containment
\[
\mathcal G_T^c
\subseteq
\left(
\bigcup_{\ell=1}^{\bar\ell_T}
E_{\mathrm{samp},\ell}
\right)
\cup
\left(
\bigcup_{\ell=1}^{\bar\ell_T}
E_{\mathrm{sel},\ell}
\right).
\]
Therefore, by Proposition~\ref{prop.angle.gauss.corrected}, below, 
Assumption~$\AR$, and the union bound,
\begin{align}
\mathbb P(\mathcal G_T^c)
&\le
\sum_{\ell=1}^{\bar\ell_T}
\mathbb P(E_{\mathrm{samp},\ell})
+
\sum_{\ell=1}^{\bar\ell_T}
\mathbb P(E_{\mathrm{sel},\ell})
\le
\gamma_0+\gamma_{\mathrm{sel}}.
\end{align}
Consequently,
\begin{equation}\label{e.secsamE}
\mathbb P(\mathcal G_T)
\ge
1-(\gamma_0+\gamma_{\mathrm{sel}}).
\end{equation}
No independence between the sampling and sorting--selection failure events is
required.

\begin{remark}
The dimension dependence in the randomized angle mechanism is now carried by
the natural threshold $t
= {\tau}/{\sqrt n}$, rather than by the number of independently sampled Gaussian directions. For
fixed $\tau$, $\gamma_0$, and $\bar\ell_T$, the population calibration
\eqref{eq:lambda-confidence} introduces no additional explicit dependence on
$n$. This distinction is important when converting iteration complexity into
function-evaluation complexity.
\end{remark}

\begin{remark}
The probability model used here should be distinguished from a stochastic
noise model. Assumption~$\AN$ imposes a uniform deterministic bound on the
evaluation error and does not assign a probability distribution to that
error. The high-probability statements above arise from the randomized
Gaussian sampling mechanism together with the accumulated
sorting--selection failure budget in Assumption~$\AR$.
\end{remark}

\begin{remark}
Assumption~$\AR$ is stated as an accumulated failure budget because the
subsequent randomized analysis proceeds by conditioning on the uniform global
good event $\mathcal G_T$. This is distinct from analyses based on counting
the frequency of favorable iterations through per-iteration conditional
success probabilities. The present formulation is therefore tailored to the
global-event transfer argument used below.
\end{remark}

\begin{remark}
The deterministic analysis in Subsection~\ref{sec:comResults} suppresses the
sampling and sorting--selection failure mechanisms and provides the
deterministic, pathwise component used after conditioning on
$\mathcal G_T$. The randomized analysis developed below supplements that
deterministic foundation by proving that the required directional properties
hold jointly on $\mathcal G_T$ with the probability bound
\eqref{e.secsamE}.
\end{remark}

\subsubsection{Probabilistic Angle Conditions}
\label{sec:randomDirections}

In the probabilistic analysis of randomized DFO methods, a key challenge is to
ensure that the sampled directions are sufficiently aligned with the true
gradient. To this end, we first establish
Proposition~\ref{prop.angle.gauss.corrected}, which yields a uniform
high-probability sampling-angle property over the prescribed deterministic
horizon. We then analyze how this property is preserved through the noisy
sorting, within-group recombination, and triangular recombination mechanisms of
{\tt DAES}, leading to the angle-transfer result in
Proposition~\ref{prop:angle-transfer-trec}.

Unlike the analysis in \cite[Proposition~3]{VRBBO}, which relies on uniformly
sampled box directions, {\tt DAES} generates Gaussian samples and uses their
normalized orientations. By rotational invariance, these orientations are
uniformly distributed on the unit sphere. Under Assumption~$\AF$, the angle
threshold is $t
= {\tau}/{\sqrt n}$, $0<\tau<\sqrt{{\pi}/{2}}$, and the antipodal sampling construction permits a population-size calibration
with no additional explicit dependence on $n$.


\begin{proposition}\label{prop.angle.gauss.corrected}
Under Assumption~$\AF$, let
$\{z^{j\ell}\}_{j\in[\lambda/2]}$ be i.i.d.\ random vectors drawn from
$\mathcal N(0,I_n)$ and let the symmetric normalized directions
$\{d^{j\ell}\}_{j\in[\lambda]}$ be constructed as in \eqref{e.zjuj}; i.e.,
\[
d^{j\ell}
=
\frac{z^{j\ell}}{\|z^{j\ell}\|},
\qquad
d^{j+\lambda/2,\ell}
=
-d^{j\ell},
\qquad
j\in[\lambda/2].
\]
Let $x^\ell$ be the current iterate of {\tt DAES}, and let
$\mathcal F_{\ell-1}$ denote the history of the algorithm prior to sampling
the fresh Gaussian directions at iteration $\ell$. Then
\begin{equation}\label{eq:per-iteration-sampling-failure-prop}
\mathbb P\!\left(
E_{\mathrm{samp},\ell}
\,\middle|\,
\mathcal F_{\ell-1}
\right)
\le
\frac{\gamma_0}{\bar\ell_T},
\qquad
\ell\in[\bar\ell_T].
\end{equation}
Consequently, with probability at least $1-\gamma_0$, for every
$\ell\in[\ell_T]$ such that $g(x^\ell)\neq0$, there exists an index
$j\in[\lambda]$ satisfying
\begin{equation}\label{e.angleqp.corrected}
\frac{|g(x^\ell)^Td^{j\ell}|}
{\|g(x^\ell)\|\,\|d^{j\ell}\|}
\ge
t
=
\frac{\tau}{\sqrt n}.
\end{equation}
If $g(x^\ell)=0$, stationarity has already been attained and no angle
condition is required.
\end{proposition}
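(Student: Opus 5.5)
The plan is to condition on $\mathcal F_{\ell-1}$, under which $x^\ell$ and hence $q:=g(x^\ell)/\|g(x^\ell)\|$ are fixed, and then reduce everything to a one-dimensional marginal of the uniform distribution on the sphere. If $g(x^\ell)=0$ the event is empty by convention, and likewise for $\ell>\ell_T$, so there is nothing to prove in those cases. Otherwise, the antipodal construction gives $|q^Td^{j+\lambda/2,\ell}|=|q^Td^{j\ell}|$. Hence $E_{\mathrm{samp},\ell}=\bigcap_{j\in[\lambda/2]}\{|q^Tu^{j\ell}|<t\}$, where $u^{j\ell}=z^{j\ell}/\|z^{j\ell}\|$ are i.i.d. uniform on $S^{n-1}$. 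Uniformity follows from the rotational invariance of $\mathcal N(0,I_n)$, and independence from $\mathcal F_{\ell-1}$ follows from the freshness of the samples. Therefore
\[
\mathbb P(E_{\mathrm{samp},\ell}\mid\mathcal F_{\ell-1})=p_n(t)^{\lambda/2},
\qquad
p_n(t):=\mathbb P(|u_1|<t),
\]
and by rotational invariance $p_n(t)$ does not depend on $q$.

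The key step is the bound $p_n(\tau/\sqrt n)\le r_\tau=\tau\sqrt{2/\pi}$. The marginal density of $u_1$ on $(-1,1)$ is $c_n(1-s^2)^{(n-3)/2}$ with $c_n=\Gamma(n/2)/(\sqrt\pi\,\Gamma((n-1)/2))$. For $n\ge3$ this density is maximized at $s=0$, so $p_n(t)\le 2c_nt$. For $n=2$ the density $1/(\pi\sqrt{1-s^2})$ is increasing in $|s|$, so a separate check is needed. There one has $p_2(t)=\frac{2}{\pi}\arcsin t$, and with $t=\tau/\sqrt2$ I would compare $\frac2\pi\arcsin(\tau/\sqrt2)$ against $\tau\sqrt{2/\pi}$ directly on $\tau\in(0,\sqrt{\pi/2})$. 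Near $0$ the ratio is $\frac{2}{\pi}\cdot\frac{1}{\sqrt2}\cdot\sqrt{\pi/2}=1/\sqrt\pi<1$. Convexity of $\arcsin$ pushes the ratio up, so I would verify the endpoint numerically: $\frac2\pi\arcsin(\sqrt{\pi}/2)\approx0.69<1$.

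For $n\ge3$, I would invoke the Gautschi/Wendel-type inequality $\Gamma(n/2)/\Gamma((n-1)/2)\le\sqrt{n/2}$. This gives $2c_nt\le 2\sqrt{n/(2\pi)}\,\tau/\sqrt n=\tau\sqrt{2/\pi}=r_\tau$, which is exactly the form of $r_\tau$ in \eqref{eq:tau-scaling}. The assumption $\tau<\sqrt{\pi/2}$ ensures $r_\tau<1$. I expect this density/Gamma-ratio estimate, including the $n=2$ edge case, to be the main obstacle. It is the only place where real computation is needed.

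With $p_n(t)\le r_\tau$ in hand, the rest is arithmetic on the calibration. From \eqref{eq:lambda-confidence},
\[
\frac{\lambda}{2}=3\left\lceil\frac{\log(\bar\ell_T/\gamma_0)}{3\log(1/r_\tau)}\right\rceil\ge\frac{\log(\bar\ell_T/\gamma_0)}{\log(1/r_\tau)},
\]
so $r_\tau^{\lambda/2}\le\gamma_0/\bar\ell_T$, which proves \eqref{eq:per-iteration-sampling-failure-prop}. For the global statement, I would take expectations to get $\mathbb P(E_{\mathrm{samp},\ell})\le\gamma_0/\bar\ell_T$ for every $\ell\in[\bar\ell_T]$ (trivially for $\ell>\ell_T$). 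Then I would use $\ell_T\le\bar\ell_T$ and the union bound over the deterministic horizon, which avoids any issue with the random termination time. This yields $\mathbb P(\bigcup_{\ell\le\ell_T}E_{\mathrm{samp},\ell})\le\gamma_0$. On the complement, \eqref{e.angleqp.corrected} holds for every $\ell\in[\ell_T]$ with $g(x^\ell)\neq0$.
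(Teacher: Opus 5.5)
Your proposal is correct and follows the same route as the paper. You condition on $\mathcal F_{\ell-1}$, use the antipodal identity to reduce $E_{\mathrm{samp},\ell}$ to $\lambda/2$ conditionally i.i.d.\ uniform orientations, bound the single-pair failure probability by $r_\tau=\tau\sqrt{2/\pi}$, raise it to the power $\lambda/2$, invoke the calibration of $\lambda$, and take a union bound over the deterministic horizon $\bar\ell_T\ge\ell_T$.

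The only difference is in the single-pair bound. The paper imports the one-sided cap bound $\mathbb P(q^Tu\ge\tau/\sqrt n)\ge 1/2-\tau/\sqrt{2\pi}$ from Gratton et al.\ and symmetrizes it. You instead derive the equivalent two-sided bound directly from the marginal density $c_n(1-s^2)^{(n-3)/2}$ and the Gamma-ratio inequality, with a correct separate check for $n=2$, where the density is not maximized at $0$. This makes your argument self-contained.
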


\begin{proof}
Fix an iteration $\ell\in[\bar\ell_T]$ and condition on
$\mathcal F_{\ell-1}$. By construction, $x^\ell$ and $g(x^\ell)$ are
$\mathcal F_{\ell-1}$-measurable, whereas the Gaussian samples
$\{z^{j\ell}\}_{j\in[\lambda/2]}$ are fresh and independent of
$\mathcal F_{\ell-1}$. Hence, conditionally on $\mathcal F_{\ell-1}$,
$x^\ell$ and $g(x^\ell)$ are fixed with respect to the new Gaussian samples.

If $g(x^\ell)=0$, then
$E_{\mathrm{samp},\ell}=\emptyset$ by definition, and therefore $\mathbb P\!\left(
E_{\mathrm{samp},\ell}
\,\middle|\,
\mathcal F_{\ell-1}
\right)
=
0$. It remains to consider the case $g(x^\ell)\neq0$. Define $q^\ell
:={g(x^\ell)}/{\|g(x^\ell)\|}$.

For each $j\in[\lambda/2]$, let $u^{j\ell}
:={z^{j\ell}}/{\|z^{j\ell}\|}$. By rotational invariance of the standard Gaussian distribution,
$\{u^{j\ell}\}_{j=1}^{\lambda/2}$ are conditionally i.i.d.\ and uniformly
distributed on the unit sphere $\mathbb S^{n-1}$. Moreover, by
\eqref{e.zjuj}, $d^{j\ell}=u^{j\ell}$ and $d^{j+\lambda/2,\ell}=-u^{j\ell}$.

For the dimension-scaled threshold $t
={\tau}/{\sqrt n}$, a spherical-cap estimate gives, for every fixed unit vector $q^\ell$,
\begin{equation}\label{eq:one-sided-cap-bound}
\mathbb P\!\left(
(q^\ell)^Tu^{j\ell}
\ge
\frac{\tau}{\sqrt n}
\,\middle|\,
\mathcal F_{\ell-1}
\right)
\ge
\frac12
-
\frac{\tau}{\sqrt{2\pi}}.
\end{equation}
This estimate is the single-direction spherical-cap bound established in the
proof of \cite[Lemma~B.2, Eq. (52)]{Gratton2015}.

By symmetry of the uniform distribution on $\mathbb S^{n-1}$,
\[
\mathbb P\!\left(
(q^\ell)^Tu^{j\ell}
\le
-\frac{\tau}{\sqrt n}
\,\middle|\,
\mathcal F_{\ell-1}
\right)
=
\mathbb P\!\left(
(q^\ell)^Tu^{j\ell}
\ge
\frac{\tau}{\sqrt n}
\,\middle|\,
\mathcal F_{\ell-1}
\right).
\]
Since the two events are disjoint for $\tau>0$, it follows from
\eqref{eq:one-sided-cap-bound} that
\[
\mathbb P\!\left(
\left|
(q^\ell)^Tu^{j\ell}
\right|
\ge
\frac{\tau}{\sqrt n}
\,\middle|\,
\mathcal F_{\ell-1}
\right)
\ge
1
-
\tau\sqrt{\frac{2}{\pi}}.
\]
Recalling $r_\tau
:=
\tau\sqrt{{2}/{\pi}}
\in(0,1)$, we therefore obtain
\begin{equation}\label{eq:single-pair-failure}
\mathbb P\!\left(
\left|
(q^\ell)^Tu^{j\ell}
\right|
<
\frac{\tau}{\sqrt n}
\,\middle|\,
\mathcal F_{\ell-1}
\right)
\le
r_\tau.
\end{equation}

The antipodal counterparts do not provide additional independent samples.
Indeed, for every $j\in[\lambda/2]$,
\[
\frac{
|g(x^\ell)^Td^{j\ell}|
}{
\|g(x^\ell)\|\,\|d^{j\ell}\|
}
=
\frac{
|g(x^\ell)^Td^{j+\lambda/2,\ell}|
}{
\|g(x^\ell)\|\,\|d^{j+\lambda/2,\ell}\|
}
=
\big|(q^\ell)^Tu^{j\ell}\big|.
\]
Consequently, the sampling-failure event
\eqref{eq:sampling-failure-event} can be written as
\[
E_{\mathrm{samp},\ell}
=
\left\{
\max_{j\in[\lambda/2]}
\big|(q^\ell)^Tu^{j\ell}\big|
<
\frac{\tau}{\sqrt n}
\right\}.
\]

Using the conditional independence of the $\lambda/2$ fresh normalized
Gaussian directions and \eqref{eq:single-pair-failure}, we obtain
\[
\mathbb P\!\left(
E_{\mathrm{samp},\ell}
\,\middle|\,
\mathcal F_{\ell-1}
\right)
\le
r_\tau^{\lambda/2}.
\]
From
\eqref{eq:lambda-confidence}, $\frac{\lambda}{2}
\log(1/r_\tau)
\ge
\log\!\left({\bar\ell_T}/{\gamma_0}
\right)$ is obtained, so that
\[
r_\tau^{\lambda/2}
=
\exp\!\left(
-\frac{\lambda}{2}\log(1/r_\tau)
\right)
\le
\exp\!\left(
-\log\!\left({\bar\ell_T}/{\gamma_0}
\right)
\right)
= {\gamma_0}/{\bar\ell_T}.
\]
Thus, $\mathbb P\!\left(
E_{\mathrm{samp},\ell}
\,\middle|\,
\mathcal F_{\ell-1}
\right)
\le{\gamma_0}/{\bar\ell_T}$, which proves
\eqref{eq:per-iteration-sampling-failure-prop} and is consistent with
\eqref{eq:per-iteration-sampling-failure}.

Taking expectations with respect to $\mathcal F_{\ell-1}$ and using the tower
property gives
\[
\mathbb P(E_{\mathrm{samp},\ell})
=
\mathbb E\!\left[
\mathbb P\!\left(
E_{\mathrm{samp},\ell}
\,\middle|\,
\mathcal F_{\ell-1}
\right)
\right]
\le
\frac{\gamma_0}{\bar\ell_T}.
\]
Hence, by the union bound over the fixed deterministic horizon,
\[
\mathbb P\!\left(
\bigcup_{\ell=1}^{\bar\ell_T}
E_{\mathrm{samp},\ell}
\right)
\le
\sum_{\ell=1}^{\bar\ell_T}
\mathbb P(E_{\mathrm{samp},\ell})
\le
\gamma_0.
\]
Since $\ell_T\le\bar\ell_T$, $\left\{
\exists\,\ell\in[\ell_T]:
E_{\mathrm{samp},\ell}
\right\}
\subseteq
\bigcup_{\ell=1}^{\bar\ell_T}
E_{\mathrm{samp},\ell}$. Therefore,
\[
\mathbb P\!\left(
\bigcap_{\ell=1}^{\ell_T}
E_{\mathrm{samp},\ell}^{\,c}
\right)
\ge
1-\gamma_0,
\]
which proves \eqref{eq:global-sampling-success}. Finally, by the definition of
$E_{\mathrm{samp},\ell}$, on this global sampling-success event every
nonstationary iteration $\ell\in[\ell_T]$ admits at least one
$j\in[\lambda]$ satisfying \eqref{e.angleqp.corrected}. This completes the
proof.
\end{proof}


\begin{remark}
With the geometric factor $v(x,p)$ defined in \eqref{eq:wxp},
Proposition~\ref{prop.angle.gauss.corrected} implies that, with probability at
least $1-\gamma_0$, for every nonstationary $\ell\in[\ell_T]$ there exists
$j\in[\lambda]$ such that
\[
v(x^\ell,d^{j\ell})
=
\frac{
\|g(x^\ell)\|\,\|d^{j\ell}\|
}{
|g(x^\ell)^Td^{j\ell}|
}
\le
\frac{1}{t}
=
\frac{\sqrt n}{\tau}.
\]
Thus, the geometric scaling is of order $\sqrt n$, as is natural for random
spherical directions at an angle threshold of order $n^{-1/2}$. The important distinction is that the required population size does not incur
an additional explicit dimension factor. Indeed, for fixed $\tau$,
\eqref{eq:lambda-confidence} gives $\lambda
=
\Oc\!\left(
\log\!\left({\bar\ell_T}/{\gamma_0}
\right)
\right)$. Hence, the dimension dependence is carried by the geometric threshold
$t=\tau/\sqrt n$, rather than by an additional growth of the population size.
\end{remark}

\begin{remark}
The parameter $\tau$ controls the trade-off between alignment and population
size. Since $t
={\tau}/{\sqrt n}$, larger values of $\tau$ yield a stronger angle threshold. At the same time, $r_\tau
=
\tau\sqrt{{2}/{\pi}}$ increases with $\tau$, and therefore the population size required by
\eqref{eq:lambda-confidence} also increases. The restriction $0<\tau<\sqrt{{\pi}/{2}}$
ensures that $r_\tau\in(0,1)$ and hence that the geometric failure probability
$r_\tau^{\lambda/2}$ decays exponentially with the number of independently
sampled antipodal pairs.
\end{remark}


Proposition~\ref{prop.angle.gauss.corrected} guarantees that, on the global
sampling-success event \eqref{eq:global-sampling-success}, every nonstationary
iteration admits at least one sampled direction whose absolute alignment with
the gradient is at least $t
= {\tau}/{\sqrt n}$. However, {\tt DAES} does not directly use an individual sampled direction as
its final search direction. Instead, the sampled directions are first ranked
according to noisy objective values, aggregated within three ranked groups, and
then combined into the triangular recombination direction
$p^\ell_{\trec}$. Consequently, the sampled angle property does not
automatically transfer to $p^\ell_{\trec}$.

To establish such a transfer, we separate two possible cancellation
mechanisms discussed in Subsection~\ref{seb.ptrec}. The first occurs \emph{within a ranked group}, where the
contribution of a well-aligned direction may be offset by the remaining
directions in the same convex combination. The second occurs
\emph{across ranked groups}, where the contribution of the group containing
the latent well-aligned direction may be offset by the other
group-recombination directions. The analysis below controls these mechanisms
separately through a within-group dominance property and a cross-group margin
property.

We now make precise the role of the sampling angle event
\eqref{e.angleqp.corrected}. Fix a nonstationary iteration $\ell$ and suppose
that $E_{\mathrm{samp},\ell}^{\,c}$ occurs. Then
\eqref{e.angleqp.corrected} implies the existence of at least one
$j\in[\lambda]$ such that
\[
|g(x^\ell)^Td^{j\ell}|
\ge
t\|g(x^\ell)\|,
\]
because $\|d^{j\ell}\|=1$. Since the sampled direction set is antipodally
symmetric, whenever $d^{j\ell}$ is present, so is $-d^{j\ell}$. Hence at
least one member of the corresponding antipodal pair satisfies
\[
g(x^\ell)^Td^{i\ell}
\le
-t\|g(x^\ell)\|.
\]
Recalling the latent index introduced in \eqref{eq:latent-index}, we therefore
obtain
\begin{equation}\label{e.angle.dPi1}
i_\ell^\star
\in
\argminx_{i\in[\lambda]}
g(x^\ell)^Td^{i\ell},
\qquad
\text{and}
\qquad
g(x^\ell)^Td^{i_\ell^\star}
\le
-t\|g(x^\ell)\|.
\end{equation}
The second inequality in \eqref{e.angle.dPi1} follows because the minimum over
the complete symmetric direction set cannot exceed the inner product of the
descent-oriented member of the well-aligned antipodal pair.

We refer to \eqref{e.angle.dPi1} as a \emph{descent-angle inequality} for the
latent direction. It should be distinguished from the {\tt NDF-SDC}
\eqref{e.SDC-nograd}: the former is a geometric relation involving the
unavailable true gradient, whereas the latter is an algorithmic acceptance
condition expressed through noisy function values.

Sorting changes only the position of the latent direction, not the direction
itself. By \eqref{eq:ranked-latent-index}, $d^{r^\star_\ell}
=
d^{i_\ell^\star}$. Therefore, the descent-angle inequality is preserved after relabeling:
\begin{equation}\label{e.angle.dPi2}
g(x^\ell)^T
d^{r^\star_\ell}
\le
-t\|g(x^\ell)\|.
\end{equation}

Let $\mathcal G_{k^\star}$, with
$k^\star=k_\ell^\star$, denote the unique ranked group containing
$r_\ell^\star$. We next show that, on the joint sampling and
sorting--selection success event, the latent direction is the unique
best-ranked element of this group
(Lemma~\ref{lem:ranking-separation}, below). 


\begin{lemma}\label{lem:ranking-separation}
Suppose that Assumptions~$\AL$, $\AN$, $\AB$, and $\AR$ hold.
On the event
\[
E_{\mathrm{samp},\ell}^{\,c}
\cap
E_{\mathrm{sel},\ell}^{\,c},
\]
let $i_\ell^\star$ denote the latent descent index defined in
\eqref{eq:latent-index}, let $r_\ell^\star$ be its ranked position after
sorting, i.e., $d^{r^\star_\ell}
=
d^{i_\ell^\star}$, and let $\mathcal G_{k^\star}$, with
$k^\star=k_\ell^\star$, be the unique ranked group containing
$r_\ell^\star$. Then, for every
$r\in\mathcal G_{k^\star}\setminus\{r_\ell^\star\}$,
\begin{equation}\label{eq:correct-group-selection}
\wt f\!\left(
x^\ell+
\alpha^{r^\star_\ell}
d^{r^\star_\ell}
\right)
<
\wt f\!\left(
x^\ell+
\alpha^{r\ell}
d^{r\ell}
\right).
\end{equation}
Consequently, the latent descent direction is the unique best-ranked element
within $\mathcal G_{k^\star}$.
\end{lemma}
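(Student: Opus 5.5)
On the event $E_{\mathrm{samp},\ell}^{\,c}\cap E_{\mathrm{sel},\ell}^{\,c}$, the complement of the failure event \eqref{eq:selection-failure-event} gives a strict first-order separation between the latent direction and every competitor in its group. The plan is to convert this separation into a strict inequality between noisy mutation values using the Taylor bound \eqref{e.TayErr1} and the noise bound \eqref{e.noisecon}. Throughout, write $x:=x^\ell$, $d^\star:=d^{r^\star_\ell}$, $\alpha^\star:=\alpha^{r^\star_\ell}$, and, for a fixed $r\in\mathcal G_{k^\star}\setminus\{r^\star_\ell\}$, $d:=d^{r\ell}$ and $\alpha:=\alpha^{r\ell}$. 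All mutation directions have unit norm by \eqref{e.lldll}, so $\|\alpha d\|^2=\alpha^2$. The mutation points are $x+\alpha d$, so sorting only relabels them and the latent mutation point is $x+\alpha^\star d^\star$.

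The first step bounds the latent point from above. Applying \eqref{e.noisecon} and then the upper half of \eqref{e.TayErr1} with $\hat s=\alpha^\star d^\star$ gives
\[
\wt f(x+\alpha^\star d^\star)\le f(x+\alpha^\star d^\star)+\omega\le f(x)+\alpha^\star g(x)^Td^\star+\tfrac L2(\alpha^\star)^2+\omega .
\]

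The second step bounds the competitor from below. The lower half of \eqref{e.TayErr1} together with \eqref{e.noisecon} gives
\[
\wt f(x+\alpha d)\ge f(x)+\alpha g(x)^Td-\tfrac L2\alpha^2-\omega .
\]

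The third step subtracts the two bounds:
\[
\wt f(x+\alpha d)-\wt f(x+\alpha^\star d^\star)\ge \alpha g(x)^Td-\alpha^\star g(x)^Td^\star-\tfrac L2\big(\alpha^2+(\alpha^\star)^2\big)-2\omega .
\]
Since $E_{\mathrm{sel},\ell}^{\,c}$ holds on $E_{\mathrm{samp},\ell}^{\,c}$, the inequality defining \eqref{eq:selection-failure-event} fails for every such $r$. The right-hand side is therefore strictly positive, which is exactly \eqref{eq:correct-group-selection}.

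For the consequence, every other member of $\mathcal G_{k^\star}$ has a strictly larger noisy value than $r^\star_\ell$. The sorting is ascending in $\wt f$, with ties broken deterministically, and the strict inequality rules out any tie affecting the position. Hence $r^\star_\ell$ occupies the smallest index of $\mathcal G_{k^\star}$, i.e. it is its unique best-ranked element.

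I do not expect a serious obstacle, since the separation condition in $\AR$ is tailored to this argument. The points needing care are using $\|d\|=1$ so that the curvature terms reduce to $\alpha^2$, and keeping track of the correct side of \eqref{e.TayErr1} for each point. $\AB$ is only needed to ensure the step sizes are well defined and positive. One bookkeeping subtlety is that the ranked positions index the reordered step sizes consistently with the directions; I would state explicitly that the permutation is applied jointly to $(\alpha^{j\ell},d^{j\ell})$.
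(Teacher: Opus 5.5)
Your proposal is correct and matches the paper's proof. Both combine the Taylor bound \eqref{e.TayErr1} (using unit-norm directions) with the noise bound \eqref{e.noisecon} to bound $\wt f(x^\ell+\alpha^{r\ell}d^{r\ell})-\wt f(x^\ell+\alpha^{r^\star_\ell}d^{r^\star_\ell})$ from below, and then use the complement of the separation-failure event to make that bound strictly positive; the only cosmetic difference is that you bound each noisy value before subtracting, whereas the paper subtracts the exact expansions first and then absorbs the $2\omega$.
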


\begin{proof}
Fix $r\in
\mathcal G_{k^\star}\setminus\{r_\ell^\star\}$. By Assumption~$\AB$, $\alpha^{r\ell},
\alpha^{r^\star_\ell}
\in
[\alpha_{\min},\alpha_{\max}]$. Define the corresponding mutation points by $y_r
:=
x^\ell+
\alpha^{r\ell}d^{r\ell}$ and $y_{r_\ell^\star}
:=
x^\ell+
\alpha^{r^\star_\ell}
d^{r^\star_\ell}$. Since the gradient of $f$ is $L$-Lipschitz continuous by Assumption~$\AL$,
the first-order expansions at $x^\ell$ can be written as
\[
f(y_r)
=
f(x^\ell)
+
\alpha^{r\ell}
g(x^\ell)^Td^{r\ell}
+
R_r,
\]
and
\[
f(y_{r_\ell^\star})
=
f(x^\ell)
+
\alpha^{r^\star_\ell}
g(x^\ell)^Td^{r^\star_\ell}
+
R_{r_\ell^\star},
\]
where, using the unit normalization of the ranked directions,
\begin{equation}\label{e.RR}
|R_r|
\le
\frac{L}{2}
(\alpha^{r\ell})^2,
\qquad
|R_{r_\ell^\star}|
\le
\frac{L}{2}
(\alpha^{r^\star_\ell})^2,
\qquad
\text{with}
\qquad
\|d^{r\ell}\|
=
\|d^{r^\star_\ell}\|
=
1.
\end{equation}

Subtracting the two expansions gives
\[
\begin{aligned}
f(y_r)-f(y_{r_\ell^\star})
=&\;
\alpha^{r\ell}
g(x^\ell)^Td^{r\ell}
-
\alpha^{r^\star_\ell}
g(x^\ell)^Td^{r^\star_\ell} + R_r-R_{r_\ell^\star}.
\end{aligned}
\]
Using \eqref{e.RR}, we obtain
\begin{equation}\label{eq:true-diff}
\begin{aligned}
f(y_r)-f(y_{r_\ell^\star})
\ge\;&
\alpha^{r\ell}
g(x^\ell)^Td^{r\ell}
-
\alpha^{r^\star_\ell}
g(x^\ell)^Td^{r^\star_\ell}
-
\frac{L}{2}
\left(
(\alpha^{r\ell})^2
+
(\alpha^{r^\star_\ell})^2
\right).
\end{aligned}
\end{equation}

By Assumption~$\AN$, each noisy evaluation differs from its exact counterpart
by at most $\omega$. Hence
\begin{equation}\label{eq:noisy-to-true}
\wt f(y_r)-\wt f(y_{r_\ell^\star})
\ge
f(y_r)-f(y_{r_\ell^\star})-2\omega.
\end{equation}
Combining \eqref{eq:true-diff} and
\eqref{eq:noisy-to-true} yields
\begin{equation}\label{eq:final-noisy-ineq}
\begin{aligned}
\wt f(y_r)-\wt f(y_{r_\ell^\star})
\ge\;&
\alpha^{r\ell}
g(x^\ell)^Td^{r\ell}
-
\alpha^{r^\star_\ell}
g(x^\ell)^Td^{r^\star_\ell}
-
\frac{L}{2}
\left(
(\alpha^{r\ell})^2
+
(\alpha^{r^\star_\ell})^2
\right)
-
2\omega.
\end{aligned}
\end{equation}

By \eqref{eq:selection-failure-event}, on $E_{\mathrm{samp},\ell}^{\,c}
\cap
E_{\mathrm{sel},\ell}^{\,c}$,  no index
$r\in\mathcal G_{k^\star}\setminus\{r_\ell^\star\}$
satisfies the separation violation. Consequently, for every such $r$,
\[
\alpha^{r\ell}
g(x^\ell)^Td^{r\ell}
-
\alpha^{r^\star_\ell}
g(x^\ell)^Td^{r^\star_\ell}
>
\frac{L}{2}
\left(
(\alpha^{r\ell})^2
+
(\alpha^{r^\star_\ell})^2
\right)
+
2\omega.
\]
Substituting this strict inequality into
\eqref{eq:final-noisy-ineq} gives $\wt f(y_r)-\wt f(y_{r_\ell^\star})
>
0$. Hence, $\wt f(y_{r_\ell^\star})
<
\wt f(y_r)$ for every $r\in
\mathcal G_{k^\star}\setminus\{r_\ell^\star\}$, which proves \eqref{eq:correct-group-selection}. Thus, on
$E_{\mathrm{samp},\ell}^{\,c}
\cap
E_{\mathrm{sel},\ell}^{\,c}$,
the latent descent direction is the unique minimizer of the noisy mutation
values within $\mathcal G_{k^\star}$ and therefore the unique best-ranked
element of that group.
\end{proof}


We now apply Lemma~\ref{lem:winner-dominant-angle} to the ranked group
$\mathcal G_{k^\star}$ containing the latent descent direction. On
$E_{\mathrm{samp},\ell}^{\,c}\cap E_{\mathrm{sel},\ell}^{\,c}$,
Lemma~\ref{lem:ranking-separation} implies that
$r_\ell^\star$ is the unique best-ranked element of
$\mathcal G_{k^\star}$. Hence, if \eqref{eq:wd-weights} holds with
$j^\star=r_\ell^\star$, then, using \eqref{e.RecDirs} and \eqref{e.angle.dPi2},
\[
\left|
\frac{g(x^\ell)^T}{\|g(x^\ell)\|}
d_{\rec k^\star}^\ell
\right|
\ge
(1-\varepsilon_{\bar w})t-\varepsilon_{\bar w}.
\]
In particular, if $\varepsilon_{\bar w}<{t}/{(1+t)}$, 
then the strict positivity conclusion
\eqref{eq:wd-strict-positivity} holds and the sampled alignment is preserved
after within-group recombination.

\begin{remark}
Since $t={\tau}/{\sqrt n}$, the positivity requirement becomes $\varepsilon_{\bar w}<{\tau}/{(\sqrt n+\tau)}$. Thus, for a dimension-uniform asymptotic interpretation, the within-group
dominance parameter must scale as $\mathcal O(n^{-1/2})$, with a sufficiently
small coefficient to preserve strict positivity. This requirement is separate
from the population-size calibration in \eqref{eq:lambda-confidence}, which
introduces no additional explicit polynomial dependence on $n$.
\end{remark}

In addition to the probabilistic assumptions above, the transfer from the
within-group recombination direction to the final triangular direction
requires a deterministic cross-group margin condition. This condition is an
analytical hypothesis used to control cancellation among the three ranked
groups; it is not imposed or checked by the numerical implementation since in a genuine DFO setting, $k^*$ depends on the latent descent direction defined through the unavailable true gradient.

\begin{proposition}\label{prop:angle-transfer-trec}
Under Assumptions~$\AL$, $\AN$, $\AB$, $\AF$, and $\AR$, suppose that the
conditions of Lemmas~\ref{lem:ranking-separation} and
\ref{lem:winner-dominant-angle} hold. Let
\begin{equation}\label{eq:barepsilon-def}
\bar\varepsilon
:=
(1-\varepsilon_{\bar w})t-\varepsilon_{\bar w}
>0,
\qquad
\varepsilon_{\bar w}<\frac{t}{1+t},
\end{equation}
where $t
={\tau}/{\sqrt n}$, and suppose that, for some constant $c_\theta>0$, the following cross-group
margin condition holds at every relevant iteration:
\begin{equation}\label{e.marginCon}
\theta_{k^\star}^\ell
\alpha_{\rec,k^\star}^\ell
\bar\varepsilon
-
\sum_{k\neq k^\star}
\theta_k^\ell\alpha_{\rec,k}^\ell
\ge
c_\theta\bar\varepsilon
\sum_{k\in[3]}
\theta_k^\ell\alpha_{\rec,k}^\ell,
\end{equation}
where $k^\star=k_\ell^\star$ denotes the unique ranked group containing
$r_\ell^\star$. Then, on the global good event $\mathcal G_T$, for every nonstationary
iteration $\ell\in[\ell_T]$,
\begin{equation}\label{e.p4a}
\frac{
|g(x^\ell)^Td^{r^\star_\ell}|
}{
\|g(x^\ell)\|\,
\|d^{r^\star_\ell}\|
}
\ge
t,
\end{equation}
and
\begin{equation}\label{e.p4b}
\frac{
|g(x^\ell)^Tp_{\trec}^\ell|
}{
\|g(x^\ell)\|\,
\|p_{\trec}^\ell\|
}
\ge
c_\theta\bar\varepsilon.
\end{equation}
In particular, \eqref{e.marginCon} implies that the unscaled triangular vector
$\widehat p_{\trec}^\ell$ is nonzero, and hence the fixed-norm scaled direction
$p_{\trec}^\ell$ is well defined. Consequently, the conclusions
\eqref{e.p4a} and \eqref{e.p4b} hold simultaneously for all nonstationary
iterations $\ell\in[\ell_T]$ with probability at least $1-(\gamma_0+\gamma_{\mathrm{sel}})$. If $g(x^\ell)=0$ at some iteration, stationarity has already been attained and
no angle condition is required.
\end{proposition}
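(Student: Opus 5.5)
The plan is to work pathwise on the global good event $\mathcal G_T$ from \eqref{e.global-good-event}. First I establish both \eqref{e.p4a} and \eqref{e.p4b} deterministically for each fixed nonstationary iteration $\ell\in[\ell_T]$ on $E_{\mathrm{samp},\ell}^{\,c}\cap E_{\mathrm{sel},\ell}^{\,c}$. Then I use \eqref{e.secsamE} to obtain the probability $1-(\gamma_0+\gamma_{\mathrm{sel}})$ for all iterations simultaneously. No independence is needed, since the union bound has already been carried out before \eqref{e.secsamE}.

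For \eqref{e.p4a}, I use the descent-angle inequality \eqref{e.angle.dPi1}. This follows from Proposition~\ref{prop.angle.gauss.corrected} on $E_{\mathrm{samp},\ell}^{\,c}$ together with antipodal symmetry. Since sorting only relabels directions, \eqref{e.angle.dPi2} holds, and $\|d^{r^\star_\ell}\|=1$ gives $|g(x^\ell)^Td^{r^\star_\ell}|\ge t\|g(x^\ell)\|$ directly. Next I pass to the group direction. Lemma~\ref{lem:ranking-separation} shows that $r^\star_\ell$ is the unique best-ranked element of $\mathcal G_{k^\star}$. The hypothesis that Lemma~\ref{lem:winner-dominant-angle} applies with $j^\star=r^\star_\ell$ and $q=g(x^\ell)/\|g(x^\ell)\|$ then gives $|q^Td^\ell_{\rec k^\star}|\ge\bar\varepsilon>0$ by \eqref{eq:wd-strict-positivity} and \eqref{eq:barepsilon-def}.

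The key step, and the one I expect to be the main obstacle, is the cross-group transfer. I write $\widehat p^\ell_{\trec}$ through \eqref{e.trid}, with $c_k:=\theta^\ell_k\alpha^\ell_{\rec k}>0$ and $W:=\sum_k\theta^\ell_k=w_p^\ell>0$. Then
\[
|q^T\widehat p^\ell_{\trec}|\ \ge\ \frac1W\Big(c_{k^\star}|q^Td^\ell_{\rec k^\star}|-\sum_{k\ne k^\star}c_k|q^Td^\ell_{\rec k}|\Big)\ \ge\ \frac1W\Big(c_{k^\star}\bar\varepsilon-\sum_{k\ne k^\star}c_k\Big),
\]
using the reverse triangle inequality and $\|d^\ell_{\rec k}\|\le1$, since each $d^\ell_{\rec k}$ is a convex combination of unit vectors. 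By the margin condition \eqref{e.marginCon}, the right-hand side is at least $c_\theta\bar\varepsilon\sum_kc_k/W>0$. This already shows $\widehat p^\ell_{\trec}\neq0$, so $p^\ell_{\trec}$ in \eqref{eq:ptrec-normalized} is well defined.

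To turn this into an angle bound, I bound the denominator by the triangle inequality: $\|\widehat p^\ell_{\trec}\|\le\sum_kc_k\|d^\ell_{\rec k}\|/W\le\sum_kc_k/W$. Dividing gives $|q^T\widehat p^\ell_{\trec}|/\|\widehat p^\ell_{\trec}\|\ge c_\theta\bar\varepsilon$. The normalization \eqref{eq:ptrec-normalized} is a positive rescaling, so the ratio is unchanged and \eqref{e.p4b} follows. The only delicate point is to normalize by $\sum_kc_k$ on both sides consistently, which is exactly why the margin condition is stated relative to $\sum_k\theta_k\alpha_{\rec,k}$. Stationary iterations are excluded by convention.
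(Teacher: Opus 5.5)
Your proposal is correct and follows the paper's proof essentially step for step. You work pathwise on $\mathcal G_T$ and obtain \eqref{e.p4a} from the latent descent index and invariance under relabeling; you reach $d^\ell_{\rec k^\star}$ via Lemmas~\ref{lem:ranking-separation} and \ref{lem:winner-dominant-angle}; and you combine the reverse triangle inequality, $\|d^\ell_{\rec k}\|\le 1$, the margin condition \eqref{e.marginCon}, the bound $\|\widehat p^\ell_{\trec}\|\le \sum_k\theta_k^\ell\alpha^\ell_{\rec k}/w_p^\ell$, and invariance of the angle ratio under positive scaling, before invoking \eqref{e.secsamE} for the probability exactly as the paper does.
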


\begin{proof}
Fix a nonstationary iteration $\ell\in[\ell_T]$ and condition on the global
good event $\mathcal G_T$. By its definition in
\eqref{e.global-good-event}, $\mathcal G_T
\subseteq
E_{\mathrm{samp},\ell}^{\,c}
\cap
E_{\mathrm{sel},\ell}^{\,c}$. Hence, both the sampling-success and sorting--selection-success properties
hold at iteration $\ell$.

Define $q^\ell
:={g(x^\ell)}/{\|g(x^\ell)\|}$. Since $E_{\mathrm{samp},\ell}^{\,c}$ occurs,
Proposition~\ref{prop.angle.gauss.corrected}, together with the antipodal
construction of the sampled direction set, yields the latent descent index
$i_\ell^\star$ defined in \eqref{eq:latent-index}. By
\eqref{e.angle.dPi1}, $g(x^\ell)^Td^{i_\ell^\star}
\le
-t\|g(x^\ell)\|$. After sorting, the same direction appears at ranked position
$r_\ell^\star$, namely, $d^{r^\star_\ell}
=
d^{i_\ell^\star}$. Therefore, by \eqref{e.angle.dPi2}, $(q^\ell)^T
d^{r^\star_\ell}
\le
-t$. Since the ranked mutation directions are unit vectors, $\|d^{r^\star_\ell}\|=1$, and consequently
\begin{equation}\label{eq:aligned-proof}
\left|
(q^\ell)^T
d^{r^\star_\ell}
\right|
\ge
t,
\qquad
\|d^{r^\star_\ell}\|=1,
\qquad
\|q^\ell\|=1.
\end{equation}
This proves \eqref{e.p4a}.

We next transfer the alignment from the latent ranked direction to its
group-recombination direction. Since
$E_{\mathrm{sel},\ell}^{\,c}$ also occurs,
Lemma~\ref{lem:ranking-separation} implies that
$r_\ell^\star$ is the unique best-ranked element of the group
$\mathcal G_{k^\star}$ containing it, where
$k^\star=k_\ell^\star$.

Suppose that the within-group weights satisfy
\eqref{eq:wd-weights} with dominant index $j^\star=r_\ell^\star$. Applying Lemma~\ref{lem:winner-dominant-angle} with
\[
u_j
=
d^{\pi(j)\ell},
\qquad
v
=
d_{\rec k^\star}^\ell
=
\sum_{j\in\mathcal G_{k^\star}}
\bar w_jd^{\pi(j)\ell},
\qquad
q=q^\ell,
\]
and using \eqref{eq:aligned-proof}, we obtain
\begin{equation}\label{eq:group-alignment-bound}
\left|
(q^\ell)^T
d_{\rec k^\star}^\ell
\right|
\ge
(1-\varepsilon_{\bar w})t-\varepsilon_{\bar w}
=
\bar\varepsilon.
\end{equation}

Consider now the unscaled triangular recombination vector
\[
\widehat p^\ell_{\trec}
=
\frac{1}{w_p^\ell}
\sum_{k=1}^3
\theta_k^\ell
\alpha_{\rec k}^\ell
d_{\rec k}^\ell,
\qquad
w_p^\ell
=
\sum_{k=1}^3\theta_k^\ell.
\]
Taking the inner product with $q^\ell$ and separating the contribution of the
group $\mathcal G_{k^\star}$ yields
\[
\begin{aligned}
\left|
(q^\ell)^T\widehat p^\ell_{\trec}
\right|
=
\frac{1}{w_p^\ell}
\Bigg|
&
\theta_{k^\star}^\ell
\alpha_{\rec k^\star}^\ell
(q^\ell)^T
d_{\rec k^\star}^\ell
+
\sum_{k\neq k^\star}
\theta_k^\ell
\alpha_{\rec k}^\ell
(q^\ell)^T
d_{\rec k}^\ell
\Bigg|.
\end{aligned}
\]
Applying the reverse triangle inequality gives
\[
\begin{aligned}
\left|
(q^\ell)^T\widehat p^\ell_{\trec}
\right|
\ge
\frac{1}{w_p^\ell}
\Bigg[
&
\theta_{k^\star}^\ell
\alpha_{\rec k^\star}^\ell
\left|
(q^\ell)^T
d_{\rec k^\star}^\ell
\right|
-
\sum_{k\neq k^\star}
\theta_k^\ell
\alpha_{\rec k}^\ell
\left|
(q^\ell)^T
d_{\rec k}^\ell
\right|
\Bigg].
\end{aligned}
\]

For every $k\in[3]$, the vector $d_{\rec k}^\ell$ is a convex combination of
unit vectors. Hence
\[
\|d_{\rec k}^\ell\|
\le
\sum_{j\in\mathcal G_k}
\bar w_j\|d^{\pi(j)\ell}\|
=
1.
\]
Since $\|q^\ell\|=1$, the Cauchy--Schwarz inequality gives $\left|
(q^\ell)^Td_{\rec k}^\ell
\right|
\le
1$. Using this estimate together with
\eqref{eq:group-alignment-bound}, we obtain
\begin{equation}\label{eq:trec-inner-lb}
\left|
(q^\ell)^T\widehat p^\ell_{\trec}
\right|
\ge
\frac{1}{w_p^\ell}
\left[
\bar\varepsilon
\theta_{k^\star}^\ell
\alpha_{\rec k^\star}^\ell
-
\sum_{k\neq k^\star}
\theta_k^\ell
\alpha_{\rec k}^\ell
\right].
\end{equation}

Applying the cross-group margin condition
\eqref{e.marginCon} to
\eqref{eq:trec-inner-lb} yields
\begin{equation}\label{eq:trec-inner-lb3}
\left|
(q^\ell)^T\widehat p^\ell_{\trec}
\right|
\ge
\frac{c_\theta}{w_p^\ell}
\bar\varepsilon
\sum_{k=1}^3
\theta_k^\ell
\alpha_{\rec k}^\ell.
\end{equation}
Since the right-hand side is strictly positive, $\widehat p^\ell_{\trec}\neq0$. On the other hand, using the definition of
$\widehat p^\ell_{\trec}$,
$\|d_{\rec k}^\ell\|\le1$, and the triangle inequality, we have
\begin{equation}\label{eq:trec-norm-bound}
\|\widehat p^\ell_{\trec}\|
\le
\frac{1}{w_p^\ell}
\sum_{k=1}^3
\theta_k^\ell
\alpha_{\rec k}^\ell,
\qquad
\text{where}
\qquad
\alpha_{\rec k}^\ell
\in
[\alpha_{\min},\alpha_{\max}].
\end{equation}

Combining
\eqref{eq:trec-inner-lb3} and
\eqref{eq:trec-norm-bound} gives $\left|
(q^\ell)^T\widehat p^\ell_{\trec}
\right|
\ge
c_\theta\bar\varepsilon
\|\widehat p^\ell_{\trec}\|$. Hence,
\[
\frac{
|g(x^\ell)^T\widehat p^\ell_{\trec}|
}{
\|g(x^\ell)\|\,
\|\widehat p^\ell_{\trec}\|
}
\ge
c_\theta\bar\varepsilon.
\]

The actual search direction is obtained by the fixed positive scaling $p^\ell_{\trec}
=
\delta
{
\widehat p^\ell_{\trec}
}/{
\|\widehat p^\ell_{\trec}\|
}$. Positive scaling preserves the angle ratio, and therefore
\[
\frac{
|g(x^\ell)^Tp^\ell_{\trec}|
}{
\|g(x^\ell)\|\,
\|p^\ell_{\trec}\|
}
=
\frac{
|g(x^\ell)^T\widehat p^\ell_{\trec}|
}{
\|g(x^\ell)\|\,
\|\widehat p^\ell_{\trec}\|
}
\ge
c_\theta\bar\varepsilon,
\]
which proves \eqref{e.p4b}. The argument above holds for every nonstationary
$\ell\in[\ell_T]$ on the single global event $\mathcal G_T$. Therefore,
\eqref{e.p4a} and \eqref{e.p4b} hold simultaneously over all such iterations
on $\mathcal G_T$. Finally, by \eqref{e.secsamE}, $\mathbb P(\mathcal G_T)
\ge
1-(\gamma_0+\gamma_{\mathrm{sel}})$, which proves the stated high-probability conclusion.
\end{proof}


\begin{remark}
The within-group dominance condition
\eqref{eq:wd-weights} and the cross-group margin condition
\eqref{e.marginCon} play distinct roles in
Proposition~\ref{prop:angle-transfer-trec}. The former controls cancellation
within the ranked group containing the latent descent direction, whereas the
latter controls cancellation between the three group-recombination
contributions. In particular, the cross-group margin
\eqref{e.marginCon} is a quantitative analytical transfer condition and does
not follow merely from
\[
\theta_k^\ell>0
\qquad\text{and}\qquad
\sum_{k\in[3]}\theta_k^\ell=w_p^\ell.
\]
Accordingly, it must hold uniformly over the iterations to which
Proposition~\ref{prop:angle-transfer-trec} is applied. No independence
assumption is required for this deterministic geometric transfer once the
global good event $\mathcal G_T$ has occurred.
\end{remark}


\begin{remark}
The distinction between the unscaled triangular vector and the final search
direction is essential. The vector
\[
\widehat p^\ell_{\trec}
=
\frac{1}{w_p^\ell}
\sum_{k=1}^3
\theta_k^\ell\,
\alpha_{\rec,k}^\ell\,
d^\ell_{\rec k}
\]
satisfies
\[
\begin{aligned}
\big\|\widehat p^\ell_{\trec}\big\|
&\le
\frac{1}{w_p^\ell}
\sum_{k=1}^3
\theta_k^\ell\,
\alpha_{\rec,k}^\ell
\le
\max_{k\in[3]}
\alpha^\ell_{\rec k}.
\end{aligned}
\]
The actual search direction is then defined by $p^\ell_{\trec}
=
\delta
{
\widehat p^\ell_{\trec}
}/{
\|\widehat p^\ell_{\trec}\|
}$ with $\delta>0$, so that $\|p^\ell_{\trec}\|
=
\delta$ by construction. Moreover,
\[
\frac{
|g(x^\ell)^Tp^\ell_{\trec}|
}{
\|g(x^\ell)\|\,\|p^\ell_{\trec}\|
}
=
\frac{
|g(x^\ell)^T\widehat p^\ell_{\trec}|
}{
\|g(x^\ell)\|\,
\|\widehat p^\ell_{\trec}\|
}.
\]
Thus, the angle-transfer conclusion of
Proposition~\ref{prop:angle-transfer-trec} is invariant under the fixed-norm
positive scaling used by Algorithm~{\tt DAES}.
\end{remark}


\begin{corollary}\label{c.angleN}
Suppose that the conditions of
Proposition~\ref{prop:angle-transfer-trec} hold, and define
\[
\bar c
:=
c_\theta\bar\varepsilon
=
c_\theta\big((1-\varepsilon_{\bar w})t-\varepsilon_{\bar w}\big)
>0.
\]
Then, on the global good event $\mathcal G_T$, for every nonstationary
iteration $\ell\in[\ell_T]$,
\begin{equation}\label{e.anglenew}
v(x^\ell,p^\ell_{\trec})
:=
\frac{
\|g(x^\ell)\|\,
\|p^\ell_{\trec}\|
}{
|g(x^\ell)^Tp^\ell_{\trec}|
}
\le
\frac{1}{\bar c}.
\end{equation}
Consequently, the bound \eqref{e.anglenew} holds simultaneously for all
nonstationary iterations $\ell\in[\ell_T]$ with probability at least $1-(\gamma_0+\gamma_{\mathrm{sel}})$. If $g(x^\ell)=0$ at some iteration, stationarity has already been attained and
the geometric factor $v(x^\ell,p^\ell_{\trec})$ need not be invoked.
\end{corollary}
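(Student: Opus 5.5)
The plan is to read Corollary~\ref{c.angleN} directly off the angle-ratio bound \eqref{e.p4b} of Proposition~\ref{prop:angle-transfer-trec}. The geometric factor $v(x^\ell,p^\ell_{\trec})$ in \eqref{eq:wxp} is exactly the reciprocal of that ratio, so the bound follows once the quantity is well defined.

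First I check positivity of $\bar c$. By \eqref{eq:barepsilon-def}, the requirement $\varepsilon_{\bar w}<t/(1+t)$ with $t=\tau/\sqrt n>0$ gives $\bar\varepsilon=(1-\varepsilon_{\bar w})t-\varepsilon_{\bar w}>0$; this is the strict-positivity part \eqref{eq:wd-strict-positivity} of Lemma~\ref{lem:winner-dominant-angle}. Since $c_\theta>0$, it follows that $\bar c=c_\theta\bar\varepsilon>0$.

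Next, fix the global good event $\mathcal G_T$ from \eqref{e.global-good-event} and a nonstationary iteration $\ell\in[\ell_T]$. Proposition~\ref{prop:angle-transfer-trec} gives two things there. First, $\widehat p^\ell_{\trec}\neq0$, so $p^\ell_{\trec}$ is well defined with $\|p^\ell_{\trec}\|=\delta>0$. Second, \eqref{e.p4b} holds. Since $g(x^\ell)\neq0$, \eqref{e.p4b} yields
\[
|g(x^\ell)^Tp^\ell_{\trec}|\ge \bar c\,\|g(x^\ell)\|\,\delta>0 .
\]
This is the one point needing care: the denominator in \eqref{eq:wxp} must be nonzero, and the display guarantees it. Inverting the inequality gives $v(x^\ell,p^\ell_{\trec})\le 1/\bar c$, which is \eqref{e.anglenew}.

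Because this argument uses only the single event $\mathcal G_T$, the bound holds simultaneously for all nonstationary $\ell\in[\ell_T]$ on $\mathcal G_T$. The probability statement then follows from \eqref{e.secsamE}, which gives $\mathbb P(\mathcal G_T)\ge 1-(\gamma_0+\gamma_{\mathrm{sel}})$. The stationary case $g(x^\ell)=0$ is excluded by the convention adopted in the statement.

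I expect no real obstacle. The only subtle points are the nonvanishing of $g(x^\ell)^Tp^\ell_{\trec}$ and ensuring that the uniformity over iterations comes from a single global event rather than from per-iteration events; both are already supplied by Proposition~\ref{prop:angle-transfer-trec}.
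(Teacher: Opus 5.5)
Your proposal is correct and follows the paper's own proof. On $\mathcal G_T$ you invoke \eqref{e.p4b}, together with $\widehat p^\ell_{\trec}\neq0$ from Proposition~\ref{prop:angle-transfer-trec}, take reciprocals to obtain \eqref{e.anglenew}, and get the probability bound from \eqref{e.secsamE} because one global event covers all nonstationary iterations; your explicit checks that $\bar c>0$ and that $|g(x^\ell)^Tp^\ell_{\trec}|\ge\bar c\,\delta\,\|g(x^\ell)\|>0$ (so $v$ is well defined) are only implicit in the paper's ``taking reciprocals.''
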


\begin{proof}
On $\mathcal G_T$, Proposition~\ref{prop:angle-transfer-trec} gives, for every
nonstationary $\ell\in[\ell_T]$,
\[
\frac{
|g(x^\ell)^Tp^\ell_{\trec}|
}{
\|g(x^\ell)\|\,
\|p^\ell_{\trec}\|
}
\ge
c_\theta\bar\varepsilon
=
\bar c
>
0.
\]
In particular,
Proposition~\ref{prop:angle-transfer-trec} ensures that the unscaled
triangular vector is nonzero and hence that the fixed-norm direction
$p^\ell_{\trec}$ is well defined. Taking reciprocals yields
\[
\frac{
\|g(x^\ell)\|\,
\|p^\ell_{\trec}\|
}{
|g(x^\ell)^Tp^\ell_{\trec}|
}
\le
\frac{1}{\bar c},
\]
which is exactly \eqref{e.anglenew}. Since the above argument holds simultaneously over all nonstationary
iterations on the single global event $\mathcal G_T$, and since
\eqref{e.secsamE} gives $\mathbb P(\mathcal G_T)
\ge
1-(\gamma_0+\gamma_{\mathrm{sel}})$, the stated high-probability conclusion follows.
\end{proof}


\subsubsection{Probabilistic Bound on Gradient Norm}
\label{sec:ProbBoundConv}

Using Corollary~\ref{c.angleN}, we now derive a high-probability global bound
on the minimum gradient norm generated by \texttt{DAES}. The argument combines
the uniform geometric-factor bound established on the global good event
$\mathcal G_T$ with the deterministic gradient-norm estimate of
Theorem~\ref{thm:gradnorm-bound2}.

\begin{theorem}\label{the.gradnormRan}
Suppose that Assumption~$\AS$ holds, that the conditions of
Proposition~\ref{prop:angle-transfer-trec} hold, and that the hypotheses of
Theorem~\ref{thm:gradnorm-bound2} are satisfied for the triangular
recombination directions $p^\ell_{\trec}$ at the relevant iterations. Let $\bar c
:=
c_\theta\bar\varepsilon
>0$. Then
\begin{equation}\label{e.upnormgomega1}
\min_{0\le\ell\le \ell_T}\|g(x^\ell)\|
=
\Oc\!\left(
\frac{\sqrt{L\omega}}{\bar c}
\right),
\qquad
\text{with probability at least}
\qquad
1-(\gamma_0+\gamma_{\mathrm{sel}}).
\end{equation}
\end{theorem}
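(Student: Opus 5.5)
The plan is to work pathwise on the global good event $\mathcal G_T$ from \eqref{e.global-good-event}, reuse the deterministic estimate of Theorem~\ref{thm:gradnorm-bound2}, and then invoke the probability bound \eqref{e.secsamE}. First, fix a realization in $\mathcal G_T$. If some iterate $x^\ell$ with $\ell\le\ell_T$ satisfies $g(x^\ell)=0$, then $\min_{0\le\ell\le\ell_T}\|g(x^\ell)\|=0$ and the claim is trivial. Otherwise every iteration in $[\ell_T]$ is nonstationary. Corollary~\ref{c.angleN} then gives, simultaneously for all these iterations, that $\widehat p^\ell_{\trec}\neq0$, so $p^\ell_{\trec}$ is well defined with $\|p^\ell_{\trec}\|=\delta$. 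It also gives $v(x^\ell,p^\ell_{\trec})\le 1/\bar c$. The iterate $\ell=0$ should be absorbed either by including it in the analysis horizon or by noting that taking the minimum over a larger index set only decreases it. Either way, the minimum in \eqref{e.upnormg} may be bounded by restricting to indices where the geometric bound is available.

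Second, I will apply Theorem~\ref{thm:gradnorm-bound2} along this realization. Its hypotheses are those of Proposition~\ref{prop:grad-inner-sdc}, which is a purely deterministic inequality on the noisy acceptance tests and holds pathwise for every iteration by the case analysis given there. Using this, \eqref{e.upnormg} gives
\[
\min_{0\le\ell\le\ell_T}\|g(x^\ell)\|\le v_{\min}\max_{0\le\ell\le\ell_T}\Gamma(\alpha^\ell)\le \frac{1}{\bar c}\max_{0\le\ell\le\ell_T}\Gamma(\alpha^\ell),
\]
since $v_{\min}\le v(x^\ell,p^\ell_{\trec})\le 1/\bar c$ for any nonstationary $\ell$. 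Next I bound $\Gamma$ uniformly via Assumption~$\AB$. With $\alpha^\ell\in[\underline\kappa'\sqrt{\omega/L},\overline\kappa'\sqrt{\omega/L}]$, definition \eqref{eq:Gamma} yields
\[
\Gamma(\alpha^\ell)\le \frac{1}{\delta}\Big(\gamma_e\overline\kappa'\big(\wt\beta/L+\delta^2/2\big)+2/\underline\kappa'\Big)\sqrt{L\omega},
\]
which is the same constant $C_u\Delta_{\angle}\delta$-type expression that appears in the proof of Theorem~\ref{t.comf-noise1}. This gives the $\Oc(\sqrt{L\omega}/\bar c)$ bound on $\mathcal G_T$, with a constant independent of the realization. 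This realization-independence is needed to pass to a probability statement.

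Finally, since the inequality holds on every outcome in $\mathcal G_T$, the event in \eqref{e.upnormgomega1} contains $\mathcal G_T$. Its probability is therefore at least $1-(\gamma_0+\gamma_{\mathrm{sel}})$ by \eqref{e.secsamE}. I expect the main obstacle to be measurability and horizon bookkeeping rather than analysis. Specifically, $\ell_T$ is random, $v_{\min}$ is attained at a random index, and Corollary~\ref{c.angleN} covers only $\ell\in[\ell_T]$ and not $\ell=0$. I would handle this by working entirely pathwise on $\mathcal G_T$. The deterministic horizon $\bar\ell_T$ already enters through the union bound in \eqref{e.secsamE}, so no further conditioning is needed. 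I would also note explicitly that the constant hidden in $\Oc$ depends only on $\gamma_e,\wt\beta,L,\delta,\underline\kappa',\overline\kappa'$.
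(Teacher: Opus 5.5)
Your proposal is correct and follows the paper's proof essentially step for step. You restrict to $\mathcal G_T$, dispose of the stationary case, take $v(x^\ell,p^\ell_{\trec})\le 1/\bar c$ from Corollary~\ref{c.angleN}, apply Theorem~\ref{thm:gradnorm-bound2}, absorb $\ell=0$ by monotonicity of the minimum, and conclude with \eqref{e.secsamE}. The only differences are cosmetic: you make the uniform bound on $\Gamma(\alpha^\ell)$ explicit, which the paper leaves inside Theorem~\ref{thm:gradnorm-bound2}. One caution: Proposition~\ref{prop:grad-inner-sdc} does not hold ``pathwise for every iteration'' unconditionally, since its Case~2 needs the extrapolation to stop at a failed trial rather than by exhausting the budget $T$. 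You should therefore invoke it as the assumed hypothesis, which the statement allows.
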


\begin{proof}
Condition on the global good event $\mathcal G_T$. By
\eqref{e.secsamE}, $\mathbb P(\mathcal G_T)
\ge
1-(\gamma_0+\gamma_{\mathrm{sel}})$. If there exists an iteration
$\ell\in[\ell_T]$ such that $g(x^\ell)=0$, then $\min_{0\le j\le\ell_T}
\|g(x^j)\|
=
0$, and \eqref{e.upnormgomega1} follows immediately. We may therefore restrict
attention to the case in which all relevant iterations are nonstationary. On $\mathcal G_T$, Corollary~\ref{c.angleN} gives, simultaneously for every
nonstationary iteration $\ell\in[\ell_T]$, $v(x^\ell,p^\ell_{\trec})
\le{1}/{\bar c}$. Hence, the geometric factor associated with the triangular recombination
direction is uniformly bounded over the relevant iterations. Applying
Theorem~\ref{thm:gradnorm-bound2} with $p^\ell
=
p^\ell_{\trec}$ therefore yields
\[
\min_{1\le\ell\le\ell_T}
\|g(x^\ell)\|
=
\Oc\!\left(
\frac{\sqrt{L\omega}}{\bar c}
\right).
\]
Since $\D\min_{0\le\ell\le\ell_T}
\|g(x^\ell)\|
\le
\D\min_{1\le\ell\le\ell_T}
\|g(x^\ell)\|$, we obtain
\[
\min_{0\le\ell\le\ell_T}
\|g(x^\ell)\|
=
\Oc\!\left(
\frac{\sqrt{L\omega}}{\bar c}
\right),
\]
which is \eqref{e.upnormgomega1}. The preceding argument holds on the single global event $\mathcal G_T$.
Therefore, using \eqref{e.secsamE}, the conclusion holds with probability at
least $1-(\gamma_0+\gamma_{\mathrm{sel}})$.
\end{proof}

\subsubsection{Probabilistic Bounds on Convergence Accuracy}
\label{sec:ProbAccuBound}

We now translate the high-probability gradient bound of
Theorem~\ref{the.gradnormRan} into corresponding bounds on the objective
accuracy and, under strong convexity, on the distance to the unique minimizer.
Thus, the noise-limited stationarity guarantee obtained in the previous
subsection yields explicit high-probability convergence guarantees for
\texttt{DAES} without requiring exact gradient information.

\begin{theorem}\label{t.c-noise1r}
Suppose that the hypotheses of Theorem~\ref{the.gradnormRan} hold, and let $\bar c
=
c_\theta\bar\varepsilon
>0$ be as defined therein. Let $\ell^{**}
\in
\argminx_{\;0\le\ell\le\ell_T}
\|g(x^\ell)\|$, and let $x^{\ell^{**}}$ denote the corresponding iterate of
Algorithm~{\tt DAES}. Then, with probability at least $1-(\gamma_0+\gamma_{\mathrm{sel}})$, the following conclusions hold:
\begin{enumerate}[label=(\roman*)]

\item If $f$ is convex, then
\begin{equation}\label{e.pdiffconvex}
f(x^{\ell^{**}})-\ul f
=
\Oc\!\left(
\frac{\sqrt{L\omega}}{\bar c}
\right).
\end{equation}

\item If $f$ is $\mu_c$-strongly convex, then
\begin{equation}\label{e.pdiffsconvex}
f(x^{\ell^{**}})-\ul f
=
\Oc\!\left(
\frac{L\omega}
{\mu_c\,\bar c^{\,2}}
\right),
\qquad
\|x^{\ell^{**}}-\ul x\|
=
\Oc\!\left(
\frac{\sqrt{L\omega}}
{\mu_c\,\bar c}
\right).
\end{equation}

\end{enumerate}
\end{theorem}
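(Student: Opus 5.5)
The plan is to work pathwise on the global good event $\mathcal G_T$ and then invoke the probability bound \eqref{e.secsamE}. By Theorem~\ref{the.gradnormRan}, on $\mathcal G_T$ we have $\min_{0\le\ell\le\ell_T}\|g(x^\ell)\| = \Oc(\sqrt{L\omega}/\bar c)$. Since $\ell^{**}$ attains this minimum, it follows that
\[
\|g(x^{\ell^{**}})\| \le C\,\frac{\sqrt{L\omega}}{\bar c}
\]
for a constant $C$ depending only on $\gamma_e,\wt\beta,\delta,\underline\kappa',\overline\kappa'$. This constant comes from $\max_\ell\Gamma(\alpha^\ell)$ under Assumption~$\AB$. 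The two conclusions then follow by repeating the deterministic argument of Theorem~\ref{t.c-noise1}, with $v_{\min}$ replaced by the uniform bound $1/\bar c$ from Corollary~\ref{c.angleN}.

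For (i), I will use convexity in the form $f(x^{\ell^{**}})-\ul f \le g(x^{\ell^{**}})^T(x^{\ell^{**}}-\ul x)$ and then Cauchy--Schwarz. The distance term is bounded by $d_{\max}:=\sup_{x\in\mathcal L(x^0)}\|x-\ul x\|<\infty$, which is finite by $\AS$. This requires $x^{\ell^{**}}\in\mathcal L(x^0)$. I will justify this from monotonicity: on successful iterations Lemma~\ref{lem:noisy-to-noiseless} gives $f(x^{\ell+1})\le f(x^\ell)-\beta\alpha^2<f(x^\ell)$, and unsuccessful iterations leave the iterate unchanged. Hence every iterate stays in the level set. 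Combining the two bounds gives $f(x^{\ell^{**}})-\ul f\le d_{\max}C\sqrt{L\omega}/\bar c$, which is \eqref{e.pdiffconvex}.

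For (ii), $\mu_c$-strong convexity yields the Polyak--\L{}ojasiewicz inequality $f(x)-\ul f\le \|g(x)\|^2/(2\mu_c)$. Substituting the gradient bound gives $\Oc(L\omega/(\mu_c\bar c^2))$. For the distance, strong monotonicity of the gradient together with $g(\ul x)=0$ gives $\mu_c\|x-\ul x\|^2\le (g(x)-g(\ul x))^T(x-\ul x)\le \|g(x)\|\,\|x-\ul x\|$. Therefore $\|x^{\ell^{**}}-\ul x\|\le \|g(x^{\ell^{**}})\|/\mu_c = \Oc(\sqrt{L\omega}/(\mu_c\bar c))$.

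Finally, every inequality above holds on the single event $\mathcal G_T$. By \eqref{e.secsamE}, this event has probability at least $1-(\gamma_0+\gamma_{\mathrm{sel}})$, so both conclusions hold simultaneously with that probability. The main delicate point is the pathwise transfer: the constants in the $\Oc$ must be deterministic, independent of the realization, so that the high-probability statement is meaningful. I will handle this by writing out the explicit constant $C$ from \eqref{eq:Gamma} with $\alpha\in[\underline\kappa'\sqrt{\omega/L},\overline\kappa'\sqrt{\omega/L}]$, together with $d_{\max}$, both of which are fixed before the run. The case in which some $g(x^\ell)=0$ is trivial, since then $\ell^{**}$ is a minimizer in the convex settings.
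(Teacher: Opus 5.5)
Your proposal is correct and follows the paper's proof essentially step for step. You condition on $\mathcal G_T$ and transfer the bound of Theorem~\ref{the.gradnormRan} to $x^{\ell^{**}}$. For (i) you use convexity with Cauchy--Schwarz and the compactness constant $d_{\max}$, and for (ii) the Polyak--\L{}ojasiewicz and strong-monotonicity inequalities.

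Your monotonicity argument via Lemma~\ref{lem:noisy-to-noiseless} (every accepted step satisfies the {\tt NDF-SDC}, so $f$ strictly decreases) justifies $x^{\ell^{**}}\in\mathcal L(x^0)$, which the paper only asserts. One small correction: your constant $C$ also depends on $L$ through the term $\wt\beta/L$. This is harmless, since $C$ remains deterministic.
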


\begin{proof}
Condition on the global good event $\mathcal G_T$. By
\eqref{e.secsamE}, $\mathbb P(\mathcal G_T)
\ge
1-(\gamma_0+\gamma_{\mathrm{sel}})$. On this event, Theorem~\ref{the.gradnormRan} gives
\[
\min_{0\le\ell\le\ell_T}
\|g(x^\ell)\|
=
\Oc\!\left(
\frac{\sqrt{L\omega}}{\bar c}
\right).
\]
By the definition of $\ell^{**}$,
\begin{equation}\label{eq:min-grad-ellstarstar}
\|g(x^{\ell^{**}})\|
=
\Oc\!\left(
\frac{\sqrt{L\omega}}{\bar c}
\right).
\end{equation}

We first consider the convex case. By the same argument used in
Theorem~\ref{t.c-noise1}, convexity gives
\[
f(x)-\ul f
\le
g(x)^T(x-\ul x)
\le
\|g(x)\|\,\|x-\ul x\|.
\]
Under Assumption~$\AS$, the relevant level set is compact; hence there exists
a finite constant $d_{\max}>0$, independent of $\omega$, such that $\|x^\ell-\ul x\|
\le
d_{\max}$
for all iterates under consideration. Therefore,
\[
f(x^\ell)-\ul f
\le
d_{\max}\|g(x^\ell)\|.
\]
Evaluating this inequality at $\ell=\ell^{**}$ and using
\eqref{eq:min-grad-ellstarstar} yields
\[
f(x^{\ell^{**}})-\ul f
\le
d_{\max}\|g(x^{\ell^{**}})\|
=
\Oc\!\left(
\frac{\sqrt{L\omega}}{\bar c}
\right),
\]
which proves \eqref{e.pdiffconvex}. We next consider the $\mu_c$-strongly convex case. Strong convexity implies
the standard gradient-dominance estimate
\[
f(x)-\ul f
\le
\frac{1}{2\mu_c}
\|g(x)\|^2,
\]
and also $\|x-\ul x\|
\le
({1}/{\mu_c})
\|g(x)\|$. Applying these inequalities at $x=x^{\ell^{**}}$ gives
\[
f(x^{\ell^{**}})-\ul f
\le
\frac{1}{2\mu_c}
\|g(x^{\ell^{**}})\|^2,
\]
and $\|x^{\ell^{**}}-\ul x\|
\le
({1}/{\mu_c})
\|g(x^{\ell^{**}})\|$. Substituting \eqref{eq:min-grad-ellstarstar} yields
\[
f(x^{\ell^{**}})-\ul f
=
\Oc\!\left(
\frac{L\omega}
{\mu_c\,\bar c^{\,2}}
\right)
\quad
\text{and}
\quad
\|x^{\ell^{**}}-\ul x\|
=
\Oc\!\left(
\frac{\sqrt{L\omega}}
{\mu_c\,\bar c}
\right),
\]
which proves \eqref{e.pdiffsconvex}. Since the entire argument holds on the single global event $\mathcal G_T$,
whose probability is bounded below by
$1-(\gamma_0+\gamma_{\mathrm{sel}})$, the stated high-probability conclusions
follow.
\end{proof}

\subsubsection{Probabilistic Complexity of {\tt DAES}}
\label{sec:ProbConv}

We now derive a high-probability counterpart of the deterministic complexity
analysis of {\tt DAES}. On the global good event $\mathcal G_T$,
Corollary~\ref{c.angleN} provides a uniform bound on the geometric factor
associated with the triangular recombination direction. Consequently, the
deterministic efficiency mechanism can be applied pathwise on
$\mathcal G_T$, with the deterministic angle constant replaced by the
probabilistic transfer constant $\bar c
=
c_\theta\bar\varepsilon
>0$.

Consider a successful recombination iteration $\ell$ of {\tt DAES}, and write
\[
x^{\ell+1}
=
x^\ell
+
s^\ell\alpha^\ell p^\ell_{\trec},
\qquad
s^\ell\in\{-1,1\},
\]
where the accepted step satisfies the {\tt NDF-SDC}
\eqref{e.SDC-nograd}. By the scaling rule of Algorithm~{\tt DAES}, the
triangular recombination direction has the fixed norm
\[
\|p^\ell_{\trec}\|
=
\delta,
\qquad
\delta>0.
\]
On $\mathcal G_T$, Corollary~\ref{c.angleN} gives
\[
\frac{
|g(x^\ell)^Tp^\ell_{\trec}|
}{
\|g(x^\ell)\|\,\|p^\ell_{\trec}\|
}
\ge
\bar c.
\]
Hence Proposition~\ref{lem:noisy-eff-academic} applies with
\[
\Delta_{\angle}
:=
\bar c,
\qquad
p
:=
p^\ell_{\trec},
\qquad
\|p^\ell_{\trec}\|
=
\delta.
\]
Define
\[
\wt K_1
:=
\frac{
\gamma_e
\left(
\wt\beta+\frac{L}{2}\delta^2
\right)
}{
\bar c\,\delta
}.
\]
Recalling from Lemma~\ref{lem:noisy-to-noiseless} that $\beta
:=
\wt\beta
-
2L(\underline\kappa')^{-2}
>0$, the corresponding high-probability efficiency estimate is
\begin{equation}\label{neweffprop}
f(x^\ell)-f(x^{\ell+1})
\;\ge\;
\wt{\eta}\,\|g(x^\ell)\|^2,
\qquad
\wt{\eta}
:=
\frac{\beta}{4(\wt K_1)^2}.
\end{equation}
Thus, on $\mathcal G_T$, every successful recombination iteration satisfying
the hypotheses of Proposition~\ref{lem:noisy-eff-academic} enjoys the same
quadratic gradient-decrease structure as in the deterministic analysis, with
the deterministic efficiency constant replaced by $\wt\eta$.


\begin{theorem}\label{t.comf-noise2}
Suppose that the hypotheses of Theorem~\ref{t.comf-noise1} hold and that the
conditions of Proposition~\ref{prop:angle-transfer-trec} are satisfied. Let $\bar c
=
c_\theta\bar\varepsilon
>0$, and define
\[
\wt K_1
:=
\frac{
\gamma_e
\left(
\wt\beta+\frac{L}{2}\delta^2
\right)
}{
\bar c\,\delta
},
\qquad
\wt\eta
:=
\frac{\beta}{4(\wt K_1)^2},
\]
where $\beta
=
\wt\beta
-
2L(\underline\kappa')^{-2}
>0$
and $\delta>0$ is the fixed norm of the scaled triangular recombination
direction. Then, with probability at least $1-(\gamma_0+\gamma_{\mathrm{sel}})$,
the successful-iteration complexity guarantees established in
Theorem~\ref{t.comf-noise1} carry over to the randomized setting, with the
deterministic efficiency constant replaced by $\wt\eta$. Moreover, if the unsuccessful-iteration counting hypotheses used in
Theorem~\ref{t.comf-noise1} also hold, then the corresponding total
iteration-complexity guarantees carry over with the same asymptotic orders.
\end{theorem}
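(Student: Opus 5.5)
The plan is to work pathwise on the global good event $\mathcal G_T$ from \eqref{e.global-good-event}, which has probability at least $1-(\gamma_0+\gamma_{\mathrm{sel}})$ by \eqref{e.secsamE}, and to reuse the deterministic argument of Theorem~\ref{t.comf-noise1} with the angle constant $\Delta_{\angle}$ replaced by $\bar c$. On $\mathcal G_T$, Corollary~\ref{c.angleN} gives $v(x^\ell,p^\ell_{\trec})\le 1/\bar c$ for every nonstationary $\ell\in[\ell_T]$. Hence the deterministic angle condition \eqref{eq:angle_condition} holds along the whole realized trajectory with $\Delta_{\angle}=\bar c$ and $\|p^\ell_{\trec}\|=\delta$. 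If some iterate is stationary, the noise-limited target is already attained and there is nothing to count.

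First, for each successful iteration I will invoke Lemma~\ref{lem:noisy-to-noiseless} to get $\mu_s(\alpha)\ge\beta$. Then Proposition~\ref{lem:noisy-eff-academic} with $\Delta_{\angle}=\bar c$ gives the efficiency estimate \eqref{neweffprop}, namely $f(x^\ell)-f(x^{\ell+1})\ge\wt\eta\|g(x^\ell)\|^2$. Unsuccessful iterations leave $f$ unchanged.

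Second, I will repeat parts (i)--(iii) of the proof of Theorem~\ref{t.comf-noise1} verbatim with $\eta$ replaced by $\wt\eta$:
\begin{itemize}
\item In the nonconvex case, telescoping while $\|g^\ell\|>\varepsilon_\omega$ gives $\ell_{T_s}\le (f^0-\ul f)/(\wt\eta\varepsilon_\omega^2)$.
\item In the convex case, the bound $f^\ell-\ul f\le d_{\max}\|g^\ell\|$ gives the recursion $\psi^{\ell+1}\le\psi^\ell-\wt\eta d_{\max}^{-2}(\psi^\ell)^2$, so $\ell_{T_s}=\Oc(\varepsilon_\omega^{-1})$.
\item In the strongly convex case, the Polyak--\L ojasiewicz inequality gives linear contraction by the factor $1-2\wt\eta\mu_c$, so $\ell_{T_s}=\Oc(\log\varepsilon_\omega^{-1})$.
\end{itemize}
In all three cases the hidden constants now depend on $\bar c^{-2}$ through $\wt K_1^2$. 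Because every step is deterministic given $\mathcal G_T$, all bounds hold simultaneously with the stated probability. No independence across iterations is needed, since the union bound was already absorbed into $\mathcal G_T$.

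Third, for the total iteration count I will redo the unsuccessful-iteration argument. On $\mathcal G_T$, an unsuccessful iteration triggers Case~1 of Proposition~\ref{prop:grad-inner-sdc}, and combined with the angle bound $\bar c$ this yields $\|g(x^\ell)\|\le \wt C_u\sqrt{L\omega}$, where $\wt C_u$ is $C_u$ with $\Delta_{\angle}$ replaced by $\bar c$. So before the first attainment time $\ell_\star$ every iteration is successful, which gives $N_u\le1$ and $N_{\rm total}\le N_s+1$. Since each iteration costs at most $\lambda+2+T$ evaluations, the evaluation complexity has the same order; here $\lambda$ is fixed by \eqref{eq:lambda-confidence}, and $T$ is fixed.

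The main obstacle is a subtle mismatch in the iterations covered by $\mathcal G_T$. It only covers $\ell\in[\ell_T]$ up to the realized termination, and $\ell_T\le\bar\ell_T=\ell_{\max}$. I must therefore ensure that the count of iterations needed, of order $\varepsilon_\omega^{-2}$ in the nonconvex case, does not exceed $\ell_{\max}$; otherwise the algorithm may stop on the safeguard before reaching accuracy. I would handle this by stating that $\ell_{\max}$ is chosen at least as large as the deterministic bound, so that termination by accuracy occurs first. That choice only enters $\lambda$ logarithmically, through $\log(\bar\ell_T/\gamma_0)$. A second point to check is the applicability of Proposition~\ref{lem:noisy-eff-academic} at iteration $\ell_T$ itself, which is handled by the $+1$ slack.
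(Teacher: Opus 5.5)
Your proposal is correct and follows the paper's proof. Like the paper, you condition on $\mathcal G_T$, use Corollary~\ref{c.angleN} to obtain the angle condition with $\Delta_{\angle}=\bar c$, and apply Lemma~\ref{lem:noisy-to-noiseless} and Proposition~\ref{lem:noisy-eff-academic} to get the uniform decrease $f(x^\ell)-f(x^{\ell+1})\ge\wt\eta\|g(x^\ell)\|^2$ on successful iterations, then rerun the counting of Theorem~\ref{t.comf-noise1} pathwise. Your explicit re-derivation of the unsuccessful-iteration bound with $\wt C_u$, and your caveat that $\ell_{\max}$ must exceed the deterministic count so that the horizon covered by $\mathcal G_T$ reaches the attainment time, are refinements the paper leaves implicit: it simply assumes the unsuccessful-counting hypotheses and identifies $\ell_T$ with the first attainment iteration.
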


\begin{proof}
Condition on the global good event $\mathcal G_T$. By
\eqref{e.secsamE}, $\mathbb P(\mathcal G_T)
\ge
1-(\gamma_0+\gamma_{\mathrm{sel}})$. On this event, Corollary~\ref{c.angleN} gives $v(x^\ell,p^\ell_{\trec})
\le{1}/{\bar c}$ simultaneously over all relevant nonstationary iterations. Equivalently,
\[
\frac{
|g(x^\ell)^Tp^\ell_{\trec}|
}{
\|g(x^\ell)\|\,\|p^\ell_{\trec}\|
}
\ge
\bar c.
\]

Consider any successful recombination iteration. By the scaling rule of
Algorithm~{\tt DAES},
\[
\|p^\ell_{\trec}\|
=
\delta,
\]
where $\delta>0$ is fixed and independent of $\ell$. The accepted iterate is
of the form
\[
x^{\ell+1}
=
x^\ell
+
s^\ell\alpha^\ell p^\ell_{\trec},
\qquad
s^\ell\in\{-1,1\},
\]
and, by hypothesis, the accepted step satisfies the conditions required by
Proposition~\ref{lem:noisy-eff-academic}.

Applying Proposition~\ref{lem:noisy-eff-academic}, with $\Delta_{\angle}
=
\bar c$, $p
=
p^\ell_{\trec}$, and $\|p^\ell_{\trec}\|
=
\delta$,
gives
\[
f(x^\ell)-f(x^{\ell+1})
\ge
\frac{\beta}{4(\wt K_1)^2}
\|g(x^\ell)\|^2,
\]
where $\wt K_1
={
\gamma_e\Big(\wt\beta+L\delta^2/2\Big)
}/{
(\bar c\,\delta)
}$. Since both $\delta$ and $\bar c$ are independent of $\ell$, the constant
$\wt K_1$ is uniform over all relevant successful recombination iterations.
Therefore,
\[
f(x^\ell)-f(x^{\ell+1})
\ge
\wt\eta
\|g(x^\ell)\|^2,
\qquad
\wt\eta
=
\frac{\beta}{4(\wt K_1)^2},
\]
which is precisely \eqref{neweffprop}.

Thus, conditioned on $\mathcal G_T$, the successful iterations satisfy the
same deterministic decrease recursion used in
Theorem~\ref{t.comf-noise1}, with the deterministic efficiency constant
replaced by $\wt\eta$. Hence the successful-iteration complexity arguments of
Theorem~\ref{t.comf-noise1} apply pathwise on $\mathcal G_T$, and the
corresponding asymptotic orders remain unchanged, with constants rescaled
according to $\wt\eta^{-1}$. If, in addition, the unsuccessful-iteration counting hypotheses invoked in
Theorem~\ref{t.comf-noise1} hold, then the same deterministic relation between
the numbers of unsuccessful and successful iterations applies on
$\mathcal G_T$. Consequently, the corresponding total iteration-complexity
orders also carry over. Finally, since $\mathbb P(\mathcal G_T)
\ge
1-(\gamma_0+\gamma_{\mathrm{sel}})$, all stated conclusions hold with at least this probability.
\end{proof}

The result above is a uniform-good-event transfer of the deterministic
complexity analysis. On $\mathcal G_T$, the probabilistic angle estimate
provides a uniform deterministic geometric bound, while the fixed scaling
\[
\|p^\ell_{\trec}\|=\delta
\]
ensures that the efficiency constant is independent of the iteration index.
To place the resulting bound in context, Table~
\ref{tab:randomized-complexity-comparison} compares the dominant nonconvex
function-evaluation dependence with representative randomized DFO frameworks.
The comparison distinguishes the oracle models, the random-direction budgets,
and, for the noisy methods, the different definitions of the noise-limited
accuracy scale. Sampling budgets that are fixed once the prescribed confidence
parameters are chosen are absorbed into the principal asymptotic order; their
explicit dependence is discussed below.

\begin{table}[t]
\centering
\caption{Dominant nonconvex function-evaluation dependence for representative
randomized DFO frameworks. Problem- and algorithm-dependent constants not
involving the displayed parameters are suppressed. For {\tt DAES}, the
displayed principal order uses $\bar c=\Theta(n^{-1/2})$, fixed $\delta$, and
the dominant $L$-dependence inherited from the efficiency factor
$\wt\eta^{-1}$. Fixed random-direction budgets are absorbed into the hidden
constant.}
\label{tab:randomized-complexity-comparison}
\scriptsize
\setlength{\tabcolsep}{4pt}
\renewcommand{\arraystretch}{1.3}

\begin{tabularx}{\textwidth}{
l
>{\raggedright\arraybackslash}X
>{\raggedright\arraybackslash}X
>{\raggedright\arraybackslash}X
}
\toprule
Method
&
Oracle / guarantee
&
Direction budget
&
Principal nonconvex FE complexity
\\
\midrule

Gratton et al.~\cite{Gratton2015}
&
\begin{tabular}{@{}l@{}} noiseless / \\ high probability \end{tabular}
&
$m=2$ admissible
&
$\mathcal{O}(nL^2\varepsilon^{-2})$
\\
\addlinespace

{\tt STP}~\cite{bergou2020stochastic}
&
\begin{tabular}{@{}l@{}} noiseless / \\ expectation \end{tabular}
&
one sampled direction
&
$\mathcal{O}(nL\varepsilon^{-2})$
\\
\addlinespace

{\tt VRBBO}~\cite{VRBBO}
&
\begin{tabular}{@{}l@{}} noiseless / \\ high probability \end{tabular}
&
$R=\lceil\log_2(\eta^{-1})\rceil$
&
$\mathcal{O}(nL^2\varepsilon^{-2})$
\\
\addlinespace

{\tt VRDFON}~\cite{VRDFON}
&
\begin{tabular}{@{}l@{}} bounded noise / \\ high probability \end{tabular}
&
$R=\lceil T_0^{-1}\log_2(\eta^{-1})\rceil$
&
$\mathcal{O}(nL^2\varepsilon_\omega^{-2})$
\\
\addlinespace

{\tt DAES}
&
\begin{tabular}{@{}l@{}} bounded noise / \\ high probability \end{tabular}
&
$\lambda\ge6$ via \eqref{eq:lambda-confidence}
&
$\mathcal{O}(nL^2\varepsilon_\omega^{-2})$
\\

\bottomrule
\end{tabularx}
\end{table}
As shown in Table~\ref{tab:randomized-complexity-comparison}, the noiseless
methods of Gratton et al.~\cite{Gratton2015},
{\tt STP}~\cite{bergou2020stochastic}, and
{\tt VRBBO}~\cite{VRBBO} target an arbitrary stationarity tolerance
$\varepsilon>0$, whereas {\tt VRDFON} and {\tt DAES} operate under bounded
evaluation noise. This distinction is essential when comparing the complexity
orders.

The random-direction budgets play analogous roles but are calibrated
differently. Gratton et al.~\cite{Gratton2015} derive the explicit
function-evaluation bound $\Oc(mnL^2\varepsilon^{-2})$, where $m$ is the number of random polling directions. Their theory permits the
fixed choice $m=2$ for admissible algorithmic parameters, for example
$\gamma=2$ and $\theta=0.5$, and hence the corresponding principal order
reduces to $\Oc(nL^2\varepsilon^{-2})$. The {\tt STP} method samples one random direction at each iteration and tests
the corresponding two signed trial points, leading directly to the same
nonconvex dimension--accuracy order $\Oc(nL\varepsilon^{-2})$.

In {\tt VRBBO}, the direction budget is chosen according to the requested
confidence level as
\[
R
=
\left\lceil
\log_2(\eta^{-1})
\right\rceil.
\]
Thus, for a fixed confidence parameter $\eta$, $R$ is a fixed constant and the
parameter-explicit bound $\Oc(nRL^2\varepsilon^{-2})$
is conventionally reported as $\Oc(nL^2\varepsilon^{-2})$. The same fixed-confidence convention applies to {\tt VRDFON}, where
\[
R
=
\left\lceil
T_0^{-1}\log_2(\eta^{-1})
\right\rceil.
\]
In particular, $R=2$ is possible only for confidence parameters satisfying
\[
1
<
T_0^{-1}\log_2(\eta^{-1})
\le
2,
\]
or equivalently $2^{-2T_0}
\le
\eta
<
2^{-T_0}$; it is not a universal choice. Nevertheless, once $T_0$ and $\eta$ are fixed,
$R$ is fixed and is absorbed into the hidden constant of the reported
function-evaluation complexity.

The corresponding situation for {\tt DAES} is similar, with one important
structural distinction. Because the present construction uses antipodal
sampling together with three ranked groups, $\lambda$ is taken as an even
multiple of three. Hence, the smallest admissible population size is $\lambda=6$, rather than $2$. Under the calibration \eqref{eq:lambda-confidence},
\[
\lambda
=
6\left\lceil
\frac{
\log(\bar\ell_T/\gamma_0)
}{
3\log(1/r_\tau)
}
\right\rceil,
\qquad
r_\tau
=
\tau\sqrt{\frac{2}{\pi}},
\]
and therefore $\lambda=6$ is sufficient whenever $r_\tau^3
\le{\gamma_0}/{\bar\ell_T}$. For more stringent confidence--horizon requirements, a larger multiple of six
is required. However, once $\tau$, $\gamma_0$, and $\bar\ell_T$ are prescribed,
$\lambda$ is a fixed constant. Consequently, it is absorbed into the hidden
constant of the principal asymptotic function-evaluation bound, in the same
sense that fixed $m$ or $R$ is absorbed in the related frameworks.

More explicitly, since the mutation phase of {\tt DAES} evaluates $\lambda$
mutation points at each iteration, the budget-explicit function-evaluation
bound contains the factor $\lambda+\Oc(1)$. Thus, under the dominant dependence displayed in
Table~\ref{tab:randomized-complexity-comparison}, one may write $\Oc\!\left(
\lambda nL^2\varepsilon_\omega^{-2}
\right)$ when the sampling budget is shown explicitly, whereas for fixed
$\lambda$ the principal order is $\Oc\!\left(
nL^2\varepsilon_\omega^{-2}
\right)$. The latter reporting convention is directly analogous to suppressing a fixed
$m$ in the Gratton et al.\ bound or a fixed $R$ in the {\tt VRBBO} and
{\tt VRDFON} bounds.

The two noisy frameworks should not, however, be assigned the same definition
of $\varepsilon_\omega$. In {\tt VRDFON}, the noise-limited scale is $\varepsilon_\omega
=
\Oc(\sqrt{nL\omega})$, and therefore
\[
\Oc\!\left(
nL^2\varepsilon_\omega^{-2}
\right)
=
\Oc(L\omega^{-1}).
\]
In contrast, the intrinsic noise scale of {\tt DAES} is defined in
\eqref{e.varomega} as $\varepsilon_\omega
=
\Oc(\sqrt{L\omega})$. The dimension dependence enters separately through the probabilistic geometric
factor. Indeed, Theorem~\ref{the.gradnormRan} gives
\[
\min_{0\le\ell\le\ell_T}
\|g(x^\ell)\|
=
\Oc\!\left(
\frac{\varepsilon_\omega}{\bar c}
\right).
\]
Hence, under the dimension-scaled angle regime $\bar c
=
\Theta(n^{-1/2})$, the randomized stationarity level becomes $\Oc\!\left(
{\varepsilon_\omega}/{\bar c}
\right)
=
\Oc(\sqrt{nL\omega})$, which is comparable to the noise-limited stationarity scale used in
{\tt VRDFON}, although the underlying definition of
$\varepsilon_\omega$ is different.

Overall, the comparison shows that the direction budgets $m$, $R$, and
$\lambda$ should be treated under a common asymptotic convention. A fixed
budget may be absorbed into the hidden constant, whereas a
confidence-explicit analysis should display its dependence. Under the present
calibration, {\tt DAES} cannot reduce the population to two directions because
of its antipodal three-group construction, but it admits the fixed minimum
$\lambda=6$ whenever the prescribed confidence--horizon condition above is
satisfied, and its population calibration introduces no additional explicit
polynomial dependence on the dimension $n$.

\section{Numerical Results}\label{NumSec}

\begin{paragraph}{\texttt{DAES} vs.\ \texttt{MADFO}}
    The comparison with \texttt{MADFO}~\cite{MADFO} evaluates the proposed selection and recombination scheme in \texttt{DAES} against a state-of-the-art matrix adaptation evolution strategy for noisy derivative-free optimization. 
    
\texttt{MADFO} employs the standard mutation–selection–recombination framework with controlled mutation step-sizes and a randomized nonmonotone line search in the recombination phase. It relies on classical selection, where mutation points are ranked by their noisy objective values and only a subset with the smallest values is used in recombination, which may reduce robustness in noisy settings due to potentially misleading evaluations and the loss of information from the remaining points. In contrast, \texttt{DAES} retains all sorted mutation directions, partitions them into three ranked groups, and combines the resulting recombination directions and points through triangular recombination. The comparison with \texttt{MADFO}, which uses the adaptation matrix \(M\), evaluates this grouped sorting--selection and recombination strategy within the \texttt{MAES} framework for noisy DFO.

We set \(M=I\) and do not perform affine matrix updates in \texttt{DAES}. This avoids anisotropy effects that may arise in noisy settings, where the matrix \(M\) can become highly ill-conditioned and lead to nearly collinear mutation directions after the transformation \(d=M z\), thereby reducing directional diversity and weakening the effect of symmetric sampling and triangular recombination. This design is consistent with the goal of \texttt{DAES}, which is to study the effect of enhanced mutation, sorting--selection, and triangular recombination mechanisms rather than improving \texttt{MAES} via matrix adaptation. Fixing \(M=I\) isolates these components and avoids confounding effects from affine transformations.
\end{paragraph}
\begin{paragraph}{Test Problems}
For the numerical comparison between \texttt{DAES} variants and \texttt{MADFO}, we consider 655 test problems with $2 \leq n \leq 100$ from the \texttt{prince} collection of the {\tt BARON} software package~\cite{BARON}, available at \url{https://minlp.com/optimization-test-problems}. Each deterministic test problem is combined with absolute and relative Gaussian noise as well as uniform noise (cf.~\cite[Section 3.4]{MADFO}), and evaluated under seven noise levels $\omega\in\{10^{i}\}_{i=0}^{2}\cup\{4^{i}\}_{i=1}^4$. This results in a total benchmark of $4 \times 7 \times 655 = 18340$
\end{paragraph}
\begin{paragraph}{Data and Performance Profiles}
To evaluate the \textit{robustness} and \textit{efficiency} of the solvers under comparison, we use data profiles, introduced by Mor\'e and Wild~\cite{MorW}, and performance profiles, proposed by Dolan and Mor\'e~\cite{DolM}. Let \(\mathcal S\) denote the set of solvers and let \(\mathcal P\) denote the set of test problems.

For a solver \(s\in\mathcal S\), the data profile measures the fraction of problems solved within a budget of at most \(\kappa\) groups of \(n_p+1\) function evaluations, where \(n_p\) is the dimension of problem \(p\). It is defined by
\begin{equation}\label{e.datap}
\delta_s(\kappa)
:=
\frac{1}{|\mathcal P|}
\left|
\left\{
p\in\mathcal P
~\Big|~
cr_{p,s}
:=
\frac{c_{p,s}}{n_p+1}
\leq \kappa
\right\}
\right|.
\end{equation}
Here, \(c_{p,s}\) is the cost of solving problem \(p\) by solver \(s\), and \(cr_{p,s}\) is the corresponding normalized cost ratio.
The performance profile compares the cost of each solver with the best cost achieved on each problem by any solver in the test set. It is defined as
\begin{equation}\label{e.perp}
\rho_s(\tau)
:=
\frac{1}{|\mathcal P|}
\left|
\left\{
p\in\mathcal P
~\Big|~
pr_{p,s}
:=
\frac{c_{p,s}}
{\min\{c_{p,\bar{s}}:\bar{s}\in\mathcal S\}}
\leq \tau
\right\}
\right|.
\end{equation}
Thus, \(\rho_s(\tau)\) is the fraction of problems with performance ratio \(pr_{p,s}\le\tau\). In particular, \(\rho_s(1)\) is the fraction where solver \(s\) achieves the best observed performance. For large \(\tau\) and \(\kappa\), \(\rho_s(\tau)\) and \(\delta_s(\kappa)\) approximate the overall fraction of problems solved by \(s\).
\end{paragraph}
\begin{paragraph}{Stopping Criteria}
For each solver \(s\in\mathcal S\), performance is measured by the normalized residual
$$q_s=(f_s-f_{\opt})/(f_0-f_{\opt}).$$
Here, \(f_s\) is the best value returned by solver \(s\), \(f_0\) is the starting value, and \(f_{\opt}\) is a reference value obtained from global or high-quality local solutions via DFO methods. The ratio \(q_s\) is used only for benchmarking since \(f_{\opt}\) is generally unavailable in black-box optimization. Solver \(s\) solves a problem if \(q_s\le\varepsilon_q\) before reaching \texttt{nfmax} or \texttt{secmax}; otherwise, it is deemed unsuccessful. Parameters are chosen so that the best-performing solver solves at least half the test problems, except under high noise, where larger budgets provide no benefit. We use \(\texttt{secmax}=600\), \(\texttt{nfmax}=12000\), and \(\varepsilon_q=10^{-4}\).
\end{paragraph}
\begin{paragraph}{Initial Point}
Following~\cite{VRDFON,MADFO}, we do not use the origin directly as the starting point. Instead, the nominal initial point is shifted by a vector \(\xi\in\mathbb R^n\) whose components are given by
\[
    \xi_i
    :=
    (-1)^{i-1}\frac{2}{i+2},
    \qquad \forall
    i\in[n].
\]
This shift is introduced to avoid artificial advantages on simple test problems from the {\tt prince} collection, for which the solution structure may be too easily inferred from the unshifted starting point. The use of a shifted initial point therefore makes the initialization less trivial and provides a more meaningful assessment of solver behavior.

Accordingly, we set $y^0 := 0$ and define the initial noisy function value as $\widetilde f_0 := \widetilde f(y^0)$. For all subsequent iterates, the noisy objective values are evaluated using
\[
    \widetilde f_\ell
    :=
    \widetilde f(y^\ell+\xi),
    \qquad
    \ell\geq 0.
\]    
\end{paragraph}
\begin{paragraph}{Tuning Parameters}
For \texttt{MADFO}, we use the default tuning parameters recommended
in~\cite{MADFO}. For \texttt{DAES}, the sample size (i.e., number of mutation
points) is set to
\[
\lambda := \max\{6,\,4+\lfloor 3\log(n)\rfloor\}.
\]
For the numerical experiments, this practical dimension-dependent population
rule is used rather than the conservative confidence calibration
\eqref{eq:lambda-confidence}. The choice keeps the \texttt{DAES} population
size close to that used by \texttt{MADFO} for most tested dimensions, thereby
supporting a fair comparison in terms of the number of sampled mutation
points. When triangular recombination is used, the sorted mutation directions
are partitioned into three ranked groups with
\[
\mu:=\lfloor \lambda/3\rfloor,
\]
where the first two groups have length $\mu$ and the third group absorbs any
remaining ranked indices. The triangular recombination parameter is fixed at \(\eta=0.9\), see \eqref{e.HeuDir_p}, with normalization radius \(\delta=10^{5}\), see \eqref{eq:ptrec-normalized}. The scaling parameters required in  \eqref{eq.recomStep0} are chosen as in~\cite{MADFO}. Other step-size parameters required in \eqref{eq:unsuccessful-sigma-contraction} and Subsection~\ref{sec.tri-sp} are \(\rho_u = 1/2\), \(\tilde\beta=10^{-12}\), \(\gamma_e=4\),  \(T=10\),  \(\alpha_{\min}=10^{-2}\) and \(\alpha_{\max}=1/2\). In addition, for {\tt DAES}, we set $\ell_{\max}=12000$. 

For the numerical implementation, the raw ranking weights are computed as
\[
w_j
=
\max\!\left\{
\log(\lambda+0.5)-\log j,\,
\epsilon_{\rm mach}
\right\},
\qquad
j\in[\lambda],
\]
and are normalized separately within each ranked group according to
\eqref{e.RecDirs}; when $\lambda$ is not divisible by three, the third group
absorbs the remaining ranked indices. Since the raw weights decrease with the
rank, let $j_k^{\rm best}$ denote the best-ranked element of
$\mathcal G_k$. We record, without modifying the algorithm, the induced
within-group dominance quantities
\[
\varepsilon_{\bar w,k}
:=
1-\bar w_{j_k^{\rm best}},
\qquad
\varepsilon_{\bar w}^{\rm num}
:=
\max_{k\in[3]}
\varepsilon_{\bar w,k}.
\]
By the group normalization in \eqref{e.RecDirs},
$\varepsilon_{\bar w,k}
=
\sum_{j\in\mathcal G_k\setminus\{j_k^{\rm best}\}}\bar w_j$,
so $\varepsilon_{\bar w}^{\rm num}$ is the smallest common value satisfying
\eqref{eq:wd-weights} for the best-ranked elements of all three groups. This
a posteriori diagnostic leaves the weights, search directions, and numerical
iterates unchanged. The additional condition
$\varepsilon_{\bar w}^{\rm num}<t/(1+t)$ in
Lemma~\ref{lem:winner-dominant-angle} is a sufficient condition for a strictly
positive within-group angle margin and is not imposed in the numerical
implementation.
\end{paragraph}
\begin{paragraph}{Comparison of \texttt{DAES} Variants}
We denote the full method, which includes all proposed components, by \texttt{DAES}, and consider two ablation variants: \texttt{DAES-ptrec} (without triangular recombination) and \texttt{DAES-extrap} (without extrapolation). The ablation without symmetric sampling is omitted since its impact on the profiles is negligible. Figure~\ref{fig:DAES} shows the corresponding performance and data profiles, where higher curves indicate better performance.

The performance profile shows that the full \texttt{DAES} consistently outperforms both ablation variants. In particular, at \(\tau=1\), it solves a substantially larger fraction of problems, indicating improved efficiency. The \texttt{DAES-ptrec} variant performs uniformly worse, showing that removing triangular recombination degrades both efficiency and robustness. The deterioration is more pronounced for \texttt{DAES-extrap}, whose curve remains lowest across almost all \(\tau\), indicating that extrapolation is crucial for overall performance.

The data profile confirms the same conclusion. Although all three variants behave similarly under very small computational budgets, the full \texttt{DAES} method becomes clearly superior as the budget increases and eventually solves a substantially larger fraction of the test problems. The \texttt{DAES-ptrec} variant shows intermediate performance, whereas \texttt{DAES-extrap} achieves the lowest final solved fraction. Overall, these results indicate that removing either of the two main components of the method, triangular recombination or extrapolation, can significantly degrade both efficiency and robustness. The best and most balanced performance is therefore obtained when these two mechanisms are used together in the full \texttt{DAES} algorithm.


\end{paragraph}
\begin{figure}[H]
	\centering
	\begin{tabular}{cc}
		\includegraphics[width=0.48\textwidth]{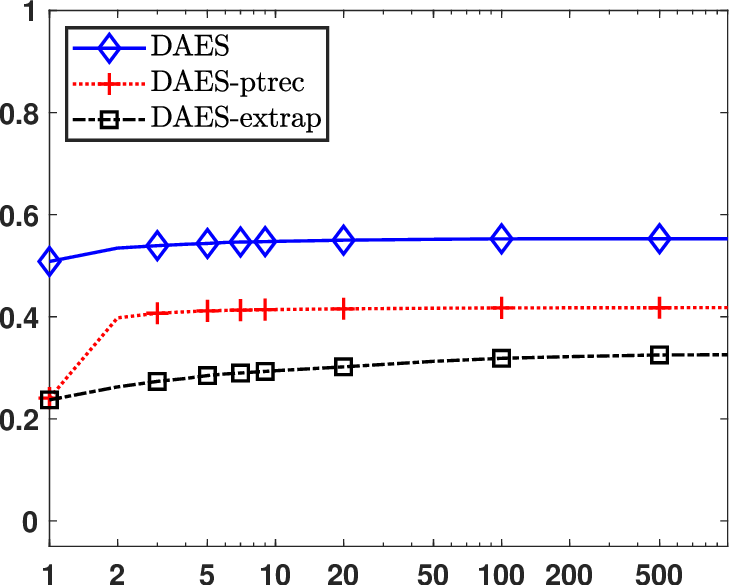}
		&
		\includegraphics[width=0.48\textwidth]{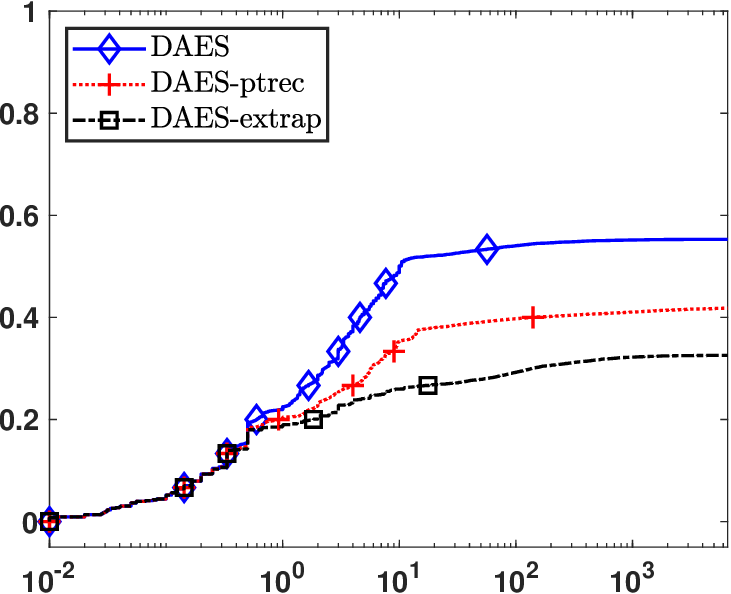}
	\end{tabular}
	\caption{Performance (left) and data (right) profiles of \texttt{DAES} variants in terms of \texttt{nf}, using the objective-quality criterion
\(q_{\mathrm{sol}}\le \varepsilon_q=10^{-4}\). The performance profile plots \(\rho(\tau)\) versus the performance ratio \(\tau\), whereas the data profile plots \(\delta(\kappa)\) versus the cost ratio \(\kappa\).}
	\label{fig:DAES}
\end{figure}

\begin{paragraph}{A comparison between {\tt DAES} and {\tt MADFO}}
Figure~\ref{f.f2} compares the full \texttt{DAES} method with \texttt{MADFO}. The performance profile shows that \texttt{DAES} has a substantially larger value at \(\tau=1\), meaning that it attains the best observed cost on a larger fraction of test instances. This indicates that \texttt{DAES} is more efficient than \texttt{MADFO} on the problems it solves fastest. As \(\tau\) increases, however, \texttt{MADFO} gradually catches up with and eventually overtakes \texttt{DAES}, reaching a higher final fraction of solved problems. This shows that \texttt{MADFO} is more robust overall when larger performance ratios are allowed. The data profile gives a consistent qualitative conclusion. After the smallest-budget regime, \texttt{DAES} solves a larger fraction of problems over a broad range of small and moderate values of \(\kappa\), whereas \texttt{MADFO} eventually overtakes it and reaches a higher final solved fraction for sufficiently large budgets.

Overall, the comparison suggests that \texttt{DAES} achieves the intended trade-off. It is deliberately simpler and more suitable for complexity analysis, yet it remains competitive with the more heuristic \texttt{MADFO} solver. In particular, \texttt{DAES} exhibits a clear efficiency advantage at \(\tau=1\) and over a broad range of practically relevant evaluation budgets, while \texttt{MADFO} achieves greater ultimate robustness when substantially larger budgets are allowed. 

The differences can be attributed to their structural design. \texttt{MADFO} is a heuristic MAES-type method with adaptive matrix and noise-robust mechanisms. In contrast, \texttt{DAES} removes matrix adaptation and relies on symmetric sampling, grouped selection, triangular recombination, and extrapolation, targeting a simpler and more analyzable structure with competitive performance.

\end{paragraph}
\begin{figure}[H]
	\centering
	\begin{tabular}{cc}
		\includegraphics[width=0.48\textwidth]{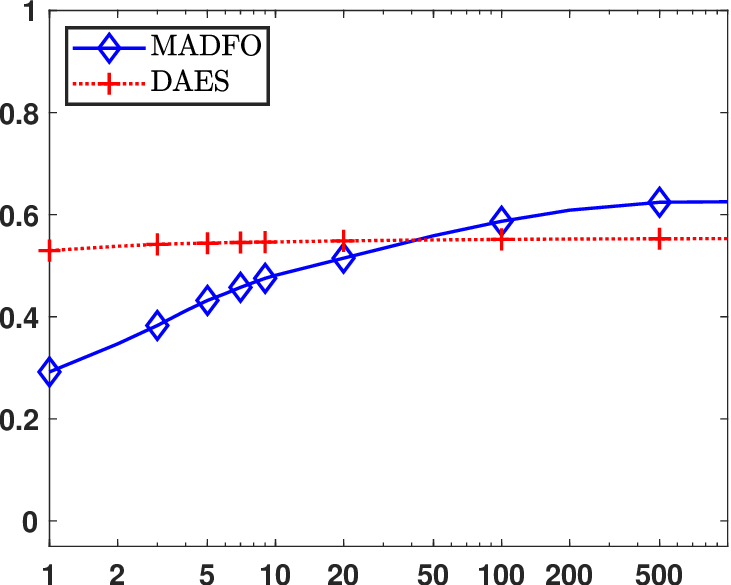}
		&
		\includegraphics[width=0.48\textwidth]{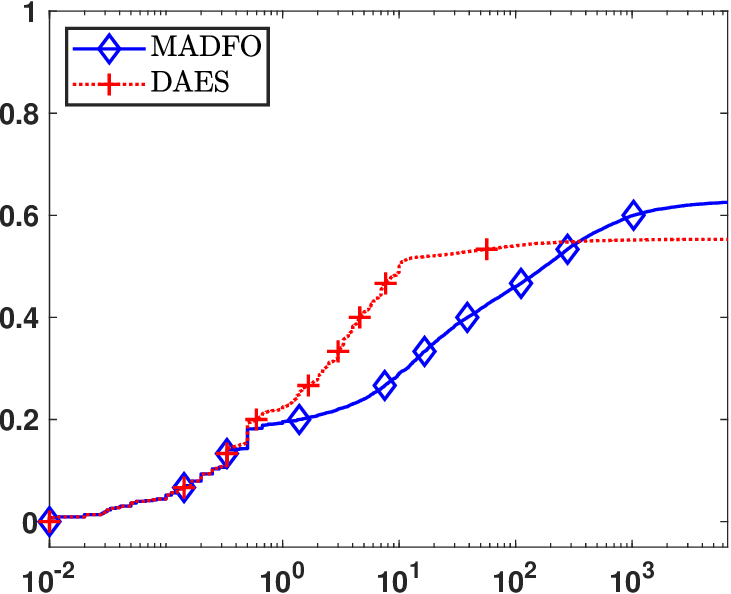}
	\end{tabular}
	\caption{Performance (left) and data (right) profiles of the full \texttt{DAES} method and \texttt{MADFO} in terms of \texttt{nf}, using the objective-quality criterion
\(q_{\mathrm{sol}}\le \varepsilon_q=10^{-4}\). The performance profile plots \(\rho(\tau)\) versus the performance ratio \(\tau\), whereas the data profile plots \(\delta(\kappa)\) versus the cost ratio \(\kappa\).}
	\label{f.f2}
\end{figure}

\section{Conclusion}\label{ConclusionSec}

We proposed \texttt{DAES}, a simplified \texttt{MAES} for noisy derivative-free optimization that fixes the adaptation matrix to the identity and uses all sampled mutation directions, reducing computational cost while preserving robustness to noise. The main algorithmic contribution of \texttt{DAES} lies in its mutation, sorting--selection, and recombination mechanisms. Symmetric sampling is used to improve directional balance under noise. In the sorting--selection phase, all sampled points are ranked and partitioned into three groups instead of discarding nonselected ones. From these groups, three recombination directions are constructed and combined via a triangular recombination scheme, yielding a structured search direction that is more informative than classical \texttt{MAES} selection rules.

We also develop a high-probability complexity analysis under bounded noise. In particular, we show that the triangular recombination direction satisfies a probabilistic angle condition with the true gradient, which enables sufficient descent arguments with high probability. Based on this, we derive iteration and function-evaluation complexity bounds for nonconvex, convex, and strongly convex problems.

The numerical experiments on \texttt{prince} problems from the \texttt{BARON} collection support the proposed design. The ablation study shows that the best performance is obtained when triangular recombination and extrapolation are used together, while removing either component degrades performance. Compared with \texttt{MADFO}, \texttt{DAES} is more efficient on problems solved with small budgets, whereas \texttt{MADFO} is more robust under larger budgets, reflecting their different design goals: heuristic robustness versus simplicity and theoretical tractability.

{\bf Funding} Morteza Kimiaei acknowledges financial support from the Austrian Science Foundation under \url{https://doi.org/10.55776/PAT2747625}. Mahsa Yousefi is a member of the INdAM Research Group GNCS; her work was partially supported by INDAM-GNCS through Progetti di Ricerca 2026.


\end{document}